\newcommand{\R}{\mathbb{R}}
\newcommand{\N}{\mathbb{N}}
\newcommand{\K}{\mathcal{K}}
\newcommand{\id}{\mathrm{id}}
\newcommand{\loc}{\mathrm{loc}}
\DeclareMathOperator*{\esssup}{ess \, sup}
\DeclareMathOperator{\proj}{proj}
\newcommand{\blank}{{\mkern 2mu\cdot\mkern 2mu}}
\newcommand{\set}[2]{\left\{ #1 \colon #2 \right\}}
\newtheorem{theorem}{Theorem}
\newtheorem{lemma}[theorem]{Lemma}
\newtheorem{proposition}[theorem]{Proposition}
\theoremstyle{definition}
\newtheorem{definition}[theorem]{Definition}
\newtheorem{example}[theorem]{Example}
\begin{document}

\title{Structure and classification results for the $\infty$-elastica problem}

\author{Roger Moser\footnote{Department of Mathematical Sciences,
University of Bath, Bath BA2 7AY, UK.
E-mail: r.moser@bath.ac.uk}}

\maketitle

\begin{abstract}
Consider the following variational problem: among all
curves in $\R^n$ of fixed length with prescribed end points and prescribed tangents at the end points,
minimise the $L^\infty$-norm of the curvature.
We show that the solutions of this problem, and of a generalised version,
are characterised by a system of differential equations.
Furthermore, we have a lot of information about the structure of solutions,
which allows a classification.
\end{abstract}

\section{Introduction}

Variational problems involving the curvature of a curve
$\Gamma \subseteq \R^n$ have a long history. This is true especially for
the Euler elastica problem, which is to minimise the quantity
\[
\int_\Gamma \kappa^2 \, ds,
\]
where $\kappa$ is the curvature of $\Gamma$.
This functional may be regarded as a model for the stored elastic energy of a thin rod and
its theory can be traced back to Jacob and Daniel Bernoulli and to Euler \cite{Euler:1744,Oldfather-Ellis-Brown:33}, but
the problem has also been studied in more modern times
\cite{Bryant-Griffiths:86, Langer-Singer:84_2, Langer-Singer:84_1, Linner:98}.
An obvious generalisation is the $p$-elastica problem for
$p \in [1, \infty)$, which corresponds to the quantity $\int_\Gamma \kappa^p \, ds$.
This functional has been proposed for applications in image processing \cite{Masnou-Morel:98} and
has also been studied in its own right \cite{Huang:04, Ferone-Kawohl-Nitsch:18}.

While the step from elastica to $p$-elastica amounts to replacing an $L^2$-norm by an $L^p$-norm,
in this paper we consider curves minimising the $L^\infty$-norm of the
curvature. Thus, roughly speaking, we wish to minimise the maximum curvature.
This quantity may not directly appear as the energy of a physical problem, but
questions related to it are of fundamental geometric interest and may appear in design
problems as well. In effect we ask, how much does a curve have to be bent in order to
satisfy certain constraints? We consider constraints in the form of a fixed length 
combined with boundary conditions,
but other types are conceivable as well and may admit a similar theory.

To my knowledge, the $\infty$-elastica problem has not been studied before.
The step from $p < \infty$ to $p = \infty$ changes the nature of the problem significantly.
In particular, we have a functional that is not differentiable in any
meaningful sense and the usual steps to find an Euler-Lagrange equation do no longer work.
While we still have the notion of a minimiser, there is no obvious way to define critical points.
In this paper, we propose another concept instead, derive a
system of equations that can be thought of as Euler-Lagrange equations, and finally
analyse and classify the solutions.

In addition to the standard $L^\infty$-norm, the theory in this paper allows a weighted version as well.
We therefore consider the following set-up of the problem. Let $n \in \N$ with $n \ge 2$.
We fix a number $\ell > 0$, which is
the prescribed length of the curves considered. We also fix a
weight function $\alpha \colon [0, \ell] \to (0, \infty)$,
which should be of bounded variation and such that $1/\alpha$ is bounded.
We represent curves in $\R^n$ by parametrisations $\gamma \colon [0, \ell] \to \R^n$ by arc length for the moment,
so we assume that $|\gamma'| \equiv 1$ in $[0, \ell]$. The curvature is then $\kappa = |\gamma''|$.
As we wish to consider its (weighted) $L^\infty$-norm, we assume that $\gamma$ belongs to
the Sobolev space $W^{2, \infty}((0, \ell); \R^n)$ and we define the functional
\[
\K_\alpha(\gamma) = \esssup_{[0, \ell]} \alpha |\gamma''|.
\]

We consider a problem for curves with prescribed end points and prescribed tangent vectors
at these end points. Thus for fixed $a_1, a_2 \in \R^n$ and fixed $T_1, T_2 \in S^{n - 1} = \set{x \in \R^n}{|x| = 1}$,
we require that
\begin{equation} \label{eqn:boundary_conditions}
\gamma(0) = a_1, \quad \gamma(\ell) = a_2, \quad \gamma'(0) = T_1, \quad \text{and} \quad \gamma'(\ell) = T_2.
\end{equation}
Let $\mathcal{G}$ denote the set of all $\gamma \in W^{2, \infty}((0, \ell); \R^n)$ with $|\gamma'| \equiv 1$
in $[0, \ell]$ satisfying \eqref{eqn:boundary_conditions}.
We are particularly interested in minimisers of $\K_\alpha$ in $\mathcal{G}$,
but the observations in this paper suggest to consider the following weaker notion as well.

\begin{definition}[$\infty$-elastica] \label{def:infty-elastica}
Suppose that $\gamma \in \mathcal{G}$. We say that $\gamma$ is an \emph{$\infty$-elastica} if 
there exists $M \in \R$ such that for every $\tilde{\gamma} \in \mathcal{G}$, the inequality
\[
\K_\alpha(\gamma) \le \K_\alpha(\tilde{\gamma}) + \frac{M}{2} \int_0^\ell |\tilde{\gamma}' - \gamma'|^2 \, ds
\]
holds true.
\end{definition}

It turns out that this condition is equivalent to a system of differential equations.
Connections between a variational problem and
differential equations are of course quite common, but for a functional that is not differentiable, such a
strong correspondence is surprising.
In order to write down the system concisely, we introduce some notation:
if $V, W \in \R^n$, then $\proj_{V, W}^\perp$ denotes the orthogonal projection onto the
orthogonal complement of the linear subspace of $\R^n$ spanned by $V$ and $W$.

\begin{theorem}[Characterisation by differential equations] \label{thm:DE}
Suppose that $\gamma \in \mathcal{G}$, and let $T = \gamma'$ and $k = \K_\alpha(\gamma)$.
Then $\gamma$ is an $\infty$-elastica if, and only if, there exist $\lambda \in S^{n - 1}$
and $g \in W^{1, \infty}(0, \ell) \setminus \{0\}$ with $g \ge 0$ such that the equations
\begin{align}
g((\alpha T')' + k^2T/\alpha) & = k^2 \proj_{T, T'}^\perp(\lambda), \label{eqn:ODE1} \\
g' & = \alpha \lambda \cdot T' \label{eqn:ODE2}
\end{align}
hold weakly in $(0, \ell)$.
\end{theorem}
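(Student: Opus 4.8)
The plan is to prove the two directions separately, working with the tangent indicatrix $T=\gamma'$: via $\gamma(s)=a_1+\int_0^s T\,d\sigma$, the class $\mathcal{G}$ corresponds to the maps $T\in W^{1,\infty}((0,\ell);S^{n-1})$ with $T(0)=T_1$, $T(\ell)=T_2$ and $\int_0^\ell T\,ds=a_2-a_1$. A useful preliminary remark is that on $\mathcal{G}$ the penalty is affine: since $|\tilde\gamma'|=|\gamma'|=1$, we have $\tilde\gamma'\cdot\gamma'-1=-\tfrac12|\tilde\gamma'-\gamma'|^2$ pointwise, whence $\frac M2\int_0^\ell|\tilde\gamma'-\gamma'|^2\,ds=M\ell-M\int_0^\ell\tilde\gamma'\cdot\gamma'\,ds$. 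Thus $\gamma$ is an $\infty$-elastica exactly when, for some $M$, it minimises the \emph{convex} functional $\tilde\gamma\mapsto\K_\alpha(\tilde\gamma)-M\int_0^\ell\tilde\gamma'\cdot\gamma'\,ds$ over the nonconvex manifold $\mathcal{G}$. I expect the harder direction to be the passage from the $\infty$-elastica property to the equations.

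For ``\eqref{eqn:ODE1}--\eqref{eqn:ODE2} $\Rightarrow$ $\infty$-elastica'' I would argue directly; the case $k:=\K_\alpha(\gamma)=0$ is trivial, so assume $k>0$ and write $T=\gamma'$. Taking the scalar product of \eqref{eqn:ODE1} with $T$ and using $T\cdot T'=0$ (so that $(\alpha T')'\cdot T=-\alpha|T'|^2$ weakly) together with $\proj_{T,T'}^\perp(\lambda)\cdot T=0$ gives $g(\alpha^2|T'|^2-k^2)=0$; since $\alpha|T'|\le k$ a.e., this yields $\alpha|T'|=k$ a.e.\ on $\{g>0\}$, and \eqref{eqn:ODE2} together with $g'=0$ a.e.\ on $\{g=0\}$ gives $\lambda\cdot T'=0$ a.e.\ there. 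Now fix $\tilde\gamma\in\mathcal{G}$, put $w=\tilde\gamma'-\gamma'$ and $\tilde k=\K_\alpha(\tilde\gamma)$, and test \eqref{eqn:ODE1} against $w$ (legitimate, as $w$ is Lipschitz and vanishes at the endpoints); integrating by parts, inserting \eqref{eqn:ODE2} for $g'$, expanding $\proj_{T,T'}^\perp(\lambda)\cdot w$, and using $\int_0^\ell w\,ds=0$, the identity $T\cdot w=-\tfrac12|w|^2$ forced by $|T|=|\tilde\gamma'|=1$, and $|T'|^2=k^2/\alpha^2$ on $\{g>0\}$ (which makes the terms linear in $w$ disappear), one is left with
\[
\int_0^\ell g\alpha\,T'\cdot w'\,ds=-\frac{k^2}{2}\int_0^\ell(\lambda\cdot T+g/\alpha)\,|w|^2\,ds .
\]
Since $\int_0^\ell g\alpha|\tilde\gamma''|^2\,ds\ge\int_0^\ell g\alpha|T'|^2\,ds+2\int_0^\ell g\alpha\,T'\cdot w'\,ds=k^2A+2\int_0^\ell g\alpha\,T'\cdot w'\,ds$ (with $A=\int_0^\ell g/\alpha\,ds>0$) and $\int_0^\ell g\alpha|\tilde\gamma''|^2\,ds\le\tilde k^2A$, we get $\tilde k^2\ge k^2-(M''/A)\int_0^\ell|w|^2\,ds$ with $M''=k^2(1+\|g/\alpha\|_{L^\infty})$; concavity of the square root (and a trivial bound when the right side is negative) then gives $k\le\tilde k+\frac M2\int_0^\ell|w|^2\,ds$ for a suitable $M$.

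For the converse I would combine a strict-convexity regularisation with $L^p$-approximation. If $\gamma$ is an $\infty$-elastica with constant $M$, then for each $\epsilon>0$ it is the \emph{unique} minimiser over $\mathcal{G}$ of $\tilde\gamma\mapsto\K_\alpha(\tilde\gamma)+\frac M2\int_0^\ell|\tilde\gamma'-\gamma'|^2\,ds+\epsilon\int_0^\ell|\tilde\gamma-\gamma|^2\,ds$, with value $\K_\alpha(\gamma)$. Replacing $\K_\alpha(\tilde\gamma)=\|\alpha\tilde\gamma''\|_{L^\infty}$ by $\|\alpha\tilde\gamma''\|_{L^p}$ gives minimisers $\gamma_{p,\epsilon}$ over the $W^{2,p}$-analogue of $\mathcal{G}$, bounded in $W^{2,q}$ for every finite $q$, with $\gamma_{p,\epsilon}\to\gamma$ as $p\to\infty$. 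For fixed $p,\epsilon$ the functional is $C^1$, so $\gamma_{p,\epsilon}$ solves an Euler--Lagrange system on the manifold $\{|\tilde\gamma'|=1\}$, with a scalar multiplier for the unit-speed constraint and a vector $\xi_{p,\epsilon}\in\R^n$ for the terminal-point constraint. Letting first $p\to\infty$ --- the normalised field $\alpha^p|\gamma_{p,\epsilon}''|^{p-2}\gamma_{p,\epsilon}''$ concentrating into a nonnegative measure $\mu_\epsilon$ carried by the contact set $\{\alpha|\gamma''|=k\}$ --- and then $\epsilon\to0$ should yield a stationarity relation $(\tfrac{\alpha}{k}\gamma''\mu)'=-(M+\eta)\gamma'-\xi$ with $\mu\ge0$, $\xi\in\R^n$ and a multiplier $\eta$.

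The hard part, and what I expect to be the decisive step, is then the nondegeneracy and regularity of this relation. One must show $\mu\ne0$ --- the $L^p$-normalisation has to be chosen so this survives in the limit; note that $\mu=0$ would force $\gamma'$ constant, hence $k=0$ --- and that $\mu=g\,ds$ with $g\in W^{1,\infty}$. The scalar product of the relation with $\gamma'$ only recovers $\alpha|\gamma''|=k$ on the support of $\mu$, so the regularity has to be drawn from the component orthogonal to the linear span of $\gamma'$ and $\gamma''$, using that $\gamma$ has constant weighted curvature on the contact set. Once $g$ is Lipschitz, dotting the relation with $\gamma'$, with $\gamma''/|\gamma''|$ and with the remaining directions, rescaling $(g,\xi)$ so that $|\xi|=k$ and setting $\lambda:=-\xi/k\in S^{n-1}$ (permissible since \eqref{eqn:ODE1}--\eqref{eqn:ODE2} is invariant under $(g,\lambda)\mapsto(cg,c\lambda)$; the case $\xi=0$ forces $k=0$ and is treated separately), a computation that reverses the one in the previous step turns the relation into \eqref{eqn:ODE1} together with \eqref{eqn:ODE2}; here $g\ge0$ is inherited from $\mu\ge0$ and $g\not\equiv0$ from $\mu\ne0$.
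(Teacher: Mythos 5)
Your ``$\Leftarrow$'' direction is sound and, apart from working in arc length rather than after the speed-$\alpha$ reparametrisation, amounts to the same mechanism as the paper's Propositions \ref{prop:DE=>pseudo-minimiser} and \ref{prop:equivalence}: test the equation with $w=\tilde\gamma'-\gamma'$, use $T\cdot w=-\tfrac12|w|^2$ and $\int_0^\ell w\,ds=0$ to kill the linear terms, and extract a quadratic error. Your weighted-$L^2$ comparison $\int g\alpha|\tilde\gamma''|^2\,ds\le\tilde k^2\int g/\alpha\,ds$ is a clean packaging of the paper's ``positive-measure set where $\sigma'\cdot\tau'$ is controlled'' argument, and the identity you derive for $\int g\alpha T'\cdot w'\,ds$ checks out (including on $\{g=0\}$, where $g'=0$ a.e.\ forces $\lambda\cdot T'=0$ so the $T'$-component of the projection contributes nothing). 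This half I would accept.

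The ``$\Rightarrow$'' direction has a genuine gap exactly where you flag ``the hard part''. Your plan --- let the normalised field $\alpha^p|\gamma_{p,\epsilon}''|^{p-2}\gamma_{p,\epsilon}''$ concentrate into a nonnegative \emph{measure} $\mu$ and then prove a posteriori that $\mu=g\,ds$ with $g\in W^{1,\infty}$ --- supplies no mechanism for either the nondegeneracy $\mu\neq0$ or the absolute continuity and Lipschitz regularity of $\mu$, and the latter is not something one can extract from ``the component orthogonal to the span of $\gamma'$ and $\gamma''$'' without further input. The paper's resolution is to renormalise \emph{jointly} with the Lagrange multiplier: setting $u_p=k_p^{1-p}|\tau_p'|^{p-2}\tau_p'/(1+|\Lambda_p|)$ turns the Euler--Lagrange equation \eqref{eqn:Euler-Lagrange} into an identity expressing $u_p'$ through uniformly bounded quantities, so the limit $u$ is automatically in $W^{1,\infty}$ (no measures ever appear), and the dichotomy ``$|\Lambda_p|$ bounded, hence $\|u_p\|_{L^{p'}}$ bounded below'' versus ``$|\Lambda_p|\to\infty$, hence $\lambda\in S^{n-1}$ and $u\equiv0$ would force $\tau$ constant'' gives $u\not\equiv0$. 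Moreover, recovering the curvature equation (your \eqref{eqn:ODE2} together with the direction statement $\tau'=ku/|u|$, i.e.\ \eqref{eqn:system2}) requires $(1+|\Lambda_p|)^{1/(p-1)}\to1$, which is not automatic: it rests on the polynomial growth bound $|\Lambda_p|=O(p^6)$ of Lemma \ref{lem:growth}, obtained by testing \eqref{eqn:Euler-Lagrange} with a cut-off times $\Lambda_p$ on an interval where $\Lambda_p$ has a definite transversal component. Neither this multiplier estimate nor any substitute for it appears in your sketch, and without it the passage from the limiting stationarity relation to the pair \eqref{eqn:ODE1}--\eqref{eqn:ODE2} (in particular to $g'=\alpha\lambda\cdot T'$) does not go through. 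A smaller point: the multiplier $\eta$ for the constraint $|\tilde\gamma'|\equiv1$ is a function of $s$, not a constant, so your guessed limit relation is not yet in a form from which the projections you describe could be taken.
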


It is clear how to interpret weak solutions of \eqref{eqn:ODE2}. In order to make sense of \eqref{eqn:ODE1},
we use that fact that $g$, being in $W^{1, \infty}(0, \ell)$, has a weak derivative. Thus \eqref{eqn:ODE1}
is satisfied weakly if
\[
\int_0^\ell \left(g \alpha T' \cdot \xi' + g'\alpha T' \cdot \xi - gk^2 \alpha^{-1} T \cdot \xi + k^2 \proj_{T, T'}^\perp(\lambda) \cdot \xi\right) \, ds = 0
\]
for all $\xi \in C_0^\infty((0, \ell); \R^n)$.

If we add another condition, we obtain a
criterion for minimisers of $\K_\alpha$, too.

\begin{theorem}[Sufficient condition for minimisers] \label{thm:minimiser}
Let $\gamma \in \mathcal{G}$ and $T = \gamma'$.
Suppose that there exist $\lambda \in S^{n - 1}$ and $g \in W^{1, \infty}(0, \ell) \setminus \{0\}$ such that
\eqref{eqn:ODE1} and \eqref{eqn:ODE2} are satisfied weakly
in $(0, \ell)$, and such that
$0 \le g \le -\alpha \lambda \cdot T$ in $[0, \ell]$. Then $\gamma$ minimises $\K_\alpha$ subject
to the boundary conditions \eqref{eqn:boundary_conditions}.
\end{theorem}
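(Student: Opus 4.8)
The plan is to prove minimality directly, by a calibration argument with the weight $g/\alpha\,ds$. The two ingredients are, first, a structural fact that $\alpha|\gamma''|=k$ almost everywhere on $\{g>0\}$ (and $\lambda=-T$ on $\{g=0\}$), which makes the weight supported where the curvature constraint is ``active''; and second, the identity obtained by testing the Euler--Lagrange equation \eqref{eqn:ODE1} with $\eta:=\tilde\gamma'-\gamma'$ for a competitor $\tilde\gamma$, in which the quadratic remainder turns out to be exactly the term $\int(\tfrac{g}{2\alpha}+\tfrac{\lambda\cdot T}{2})|\eta|^2$; the hypothesis $g\le-\alpha\lambda\cdot T$ forces this term to have the favourable sign, which is what upgrades the ``almost-minimiser'' behaviour underlying Definition~\ref{def:infty-elastica} to a genuine minimiser. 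Throughout, $k=\K_\alpha(\gamma)$ is the constant appearing in \eqref{eqn:ODE1}, and the case $k=0$ is trivial since then $\K_\alpha\ge 0=\K_\alpha(\gamma)$, so assume $k>0$; note $\{g>0\}$ is open (as $g$ is Lipschitz) and nonempty (as $g\ge 0$, $g\not\equiv 0$).

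For the structural preliminaries I would first observe that the right-hand side of the product-rule form $(g\alpha T')'=g'\alpha T'-gk^2\alpha^{-1}T+k^2\proj_{T,T'}^\perp(\lambda)$ of \eqref{eqn:ODE1} lies in $L^\infty$, hence $g\alpha T'\in W^{1,\infty}$ and \eqref{eqn:ODE1} holds a.e. On each component of $\{g>0\}$, since $1/g\in W^{1,\infty}_{\loc}$ there, we get $\alpha T'=\tfrac1g(g\alpha T')\in W^{1,\infty}_{\loc}$; taking the inner product of \eqref{eqn:ODE1} with $T$, using $\proj_{T,T'}^\perp(\lambda)\cdot T=0$ and $(\alpha T')\cdot T\equiv 0$ (so that $(\alpha T')'\cdot T=-\alpha|T'|^2$ after differentiation), yields $\alpha^2|T'|^2=k^2$, i.e.\ $\alpha|\gamma''|=k$ a.e.\ on $\{g>0\}$. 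On $\{g=0\}$ one has $g'=0$ and $(g\alpha T')'=0$ a.e., so \eqref{eqn:ODE1} forces $\proj_{T,T'}^\perp(\lambda)=0$; combined with $\lambda\cdot T'=0$ (from \eqref{eqn:ODE2} with $g'=0$) and $T\cdot T'=0$ this gives $\lambda=\pm T$, and $0\le g\le-\alpha\lambda\cdot T$ rules out the $+$ sign, so $\lambda=-T$ a.e.\ on $\{g=0\}$.

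Next, for $\tilde\gamma\in\mathcal G$ put $\eta=\tilde\gamma'-\gamma'$; by the boundary conditions $\eta\in W_0^{1,\infty}((0,\ell);\R^n)$ with $\int_0^\ell\eta\,ds=0$, and $|\tilde\gamma'|=|\gamma'|=1$ gives $T\cdot\eta=-\tfrac12|\eta|^2$. The weak form of \eqref{eqn:ODE1} has $L^\infty$ coefficients, so it extends to $\eta$ as test function; substituting $g'=\alpha\lambda\cdot T'$ from \eqref{eqn:ODE2}, the relation $\alpha^2|T'|^2=k^2$ on $\{g>0\}$, the orthogonal decomposition of $\proj_{T,T'}^\perp(\lambda)$, and $T\cdot\eta=-\tfrac12|\eta|^2$, the terms containing $\lambda\cdot T'$ cancel and a short computation gives
\[
\int_0^\ell g\alpha T'\cdot\eta'\,ds=-k^2\int_{\{g>0\}}\left(\frac{g}{2\alpha}+\frac{\lambda\cdot T}{2}\right)|\eta|^2\,ds\;-\;k^2\int_{\{g>0\}}\lambda\cdot\eta\,ds.
\]
The first integral is $\le 0$ because $g+\alpha\lambda\cdot T\le 0$; the second equals $-\int_{\{g=0\}}\lambda\cdot\eta\,ds$ since $\int_0^\ell\lambda\cdot\eta\,ds=\lambda\cdot(a_2-a_1)-\lambda\cdot(a_2-a_1)=0$, and on $\{g=0\}$ we have $\lambda\cdot\eta=-T\cdot\eta=\tfrac12|\eta|^2\ge 0$, so the second integral is $\le 0$ as well. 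Hence $\int_0^\ell g\alpha T'\cdot\eta'\,ds\ge 0$.

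Finally I would assemble the calibration. Writing $\tilde k=\K_\alpha(\tilde\gamma)$, the bound $\alpha|\tilde\gamma''|\le\tilde k$ a.e.\ gives $\int_0^\ell g|\tilde\gamma''|\,ds\le\tilde k\int_0^\ell(g/\alpha)\,ds$, while on $\{g>0\}$ the vector $\tfrac{\alpha}{k}T'$ has unit length (by $\alpha|\gamma''|=k$), so $|\tilde\gamma''|\ge\tfrac{\alpha}{k}T'\cdot\tilde\gamma''$ there and, using $\tilde\gamma''=T'+\eta'$ and $\alpha^2|T'|^2=k^2$,
\[
\int_0^\ell g|\tilde\gamma''|\,ds\ge\frac1k\int_{\{g>0\}}g\alpha T'\cdot\tilde\gamma''\,ds=k\int_0^\ell\frac{g}{\alpha}\,ds+\frac1k\int_0^\ell g\alpha T'\cdot\eta'\,ds\ge k\int_0^\ell\frac{g}{\alpha}\,ds.
\]
Combining the two estimates and dividing by $\int_0^\ell(g/\alpha)\,ds>0$ (here $g\ge 0$, $g\not\equiv 0$ and $1/\alpha$ positive and bounded) yields $\K_\alpha(\tilde\gamma)=\tilde k\ge k=\K_\alpha(\gamma)$, as desired. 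I expect the one genuinely delicate point to be the structural fact $\alpha|\gamma''|=k$ a.e.\ on $\{g>0\}$: with $\alpha$ only of bounded variation one must argue carefully that $\alpha T'$ is locally $W^{1,\infty}$ on $\{g>0\}$ so that $(\alpha T')\cdot T$ may be differentiated, and, more generally, keep precise track of where the weak identities hold pointwise. The rest is sign-chasing, with the hypothesis $g\le-\alpha\lambda\cdot T$ doing exactly the job of making the quadratic contribution nonpositive.
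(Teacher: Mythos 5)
Your proof is correct, and its core mechanism --- testing the equation with $\eta=\tilde\gamma'-\gamma'$, exploiting $T\cdot\eta=-\tfrac12|\eta|^2$ and $\int_0^\ell\eta\,ds=0$, and letting the sign condition $g+\alpha\lambda\cdot T\le 0$ control the quadratic remainder --- is the same engine that drives the paper's argument, which obtains Theorem \ref{thm:minimiser} from the last statement of Proposition \ref{prop:equivalence} and hence from Proposition \ref{prop:DE=>pseudo-minimiser}. You differ in two respects. First, you work directly in the arc-length parametrisation with $g$ and \eqref{eqn:ODE1}--\eqref{eqn:ODE2}, so you must establish by hand the structural facts ($g\alpha T'\in W^{1,\infty}$, $\alpha|\gamma''|\equiv k$ on $\{g>0\}$, $\lambda=-T$ a.e.\ on $\{g=0\}$) that the paper extracts instead from the reparametrised system \eqref{eqn:system1}--\eqref{eqn:system2} and the auxiliary field $u$; your derivation of these is sound. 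Second, and more substantively, your concluding step is genuinely different: from the key inequality $\int_0^\ell g\alpha T'\cdot\eta'\,ds\ge 0$ (the analogue of the paper's $\int_0^L\sigma'\cdot u\,dt\ge 0$) the paper argues measure-theoretically that there must be a set of positive measure on which $u\neq 0$ and $\sigma'\cdot\tau'\ge 0$, hence $|\tilde\tau'|\ge k$ there; you instead run a calibration with the nonnegative weight $g/\alpha\,ds$, supported where the curvature constraint is active, giving $\tilde k\int_0^\ell g/\alpha\,ds\ge\int_0^\ell g|\tilde\gamma''|\,ds\ge k\int_0^\ell g/\alpha\,ds$. Your version is arguably cleaner and makes the duality structure of the $L^\infty$ problem explicit; the paper's version has the advantage that essentially the same computation, run without the sign condition, also yields the weaker pseudo-minimiser inequality of Definition \ref{def:infty-elastica}, which is what is needed for Theorem \ref{thm:DE}.
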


This condition is sufficient but not necessary, as shown in Example \ref{ex:arc} below.

It is worthwhile to consider the case $\alpha \equiv 1$ separately, as the
system \eqref{eqn:ODE1}, \eqref{eqn:ODE2} can then be written as a single equation, albeit with an additional parameter.
This is because in this case, the right-hand side of \eqref{eqn:ODE2} is the derivative
of $\lambda \cdot T$ and the equation implies that there exists $\eta \in \R$ such that
$g = \lambda \cdot T - \eta$. Thus
\begin{equation} \label{eqn:ODE_alpha=1}
T'' + k^2T = \frac{k^2 \proj_{T, T'}^\perp(\lambda)}{\lambda \cdot T - \eta},
\end{equation}
at least where $\lambda \cdot T \neq \eta$.
The left-hand side is a geometric quantity related to the torsion of the corresponding curve if $n = 3$.
Indeed, it can be seen, with arguments as in Proposition \ref{prop:equivalence} below, that the torsion is
$\pm k^{-1} |T'' + k^2 T|$.

Analysing the system \eqref{eqn:ODE1}, \eqref{eqn:ODE2}, we obtain
good information about the structure of $\infty$-elasticas as well,
which allows a classification.

\begin{theorem}[Structure and classification] \label{thm:structure}
Suppose that $\gamma \in \mathcal{G}$ and let $T = \gamma'$ and $k = \K_\alpha(\gamma)$.
Then $\gamma$ is an $\infty$-elastica if, and only if,
there exists $\lambda \in S^{n - 1}$ such that at least one
of the following statements holds true.
\begin{enumerate}[(i)]
\item \label{item:2D}
There exists a line $\mathcal{L} \subseteq \R^n$ parallel to $\lambda$ and there exist
finitely many intervals $J_1, \dotsc, J_N \subseteq [0, \ell]$, pairwise disjoint and open relative to
$[0, \ell]$, such that $\gamma^{-1}(\mathcal{L}) = [0, \ell] \setminus \bigcup_{i = 1}^N J_i$
and such that for $i = 1, \dotsc, N$,
\begin{itemize}
\item 
$\gamma(\overline{J}_i) \cup \mathcal{L}$ is contained in a plane,
\item $\alpha \gamma''$ is continuous with $\alpha |\gamma''| \equiv k$ in $J_i$, and
\item
for any $s_0 \in \overline{I}_i \setminus I_i$, there exists $\delta > 0$ such that
$\lambda \cdot \gamma'' > 0$ in $(s_0, s_0 + \delta) \cap I_i$ and
$\lambda \cdot \gamma'' < 0$ in $(s_0 - \delta, s_0) \cap I_i$.
\end{itemize}

\item \label{item:3D}
There is a three-dimensional affine subspace of $\R^n$ that contains $\gamma([0, \ell])$.
Furthermore, $\alpha \gamma'' \in W^{1, \infty}((0, \ell);\R^n)$ with $\alpha |\gamma''| \equiv k$ and there exists $g \in W^{2, \infty}(0, \ell)$
with $g > 0$ such that \eqref{eqn:ODE1} and \eqref{eqn:ODE2} hold true almost everywhere.
\end{enumerate}
\end{theorem}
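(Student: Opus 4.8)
\emph{Strategy.} The argument runs entirely through Theorem~\ref{thm:DE}: a curve is an $\infty$-elastica precisely when it admits $\lambda \in S^{n-1}$ and $g \in W^{1,\infty}(0,\ell)\setminus\{0\}$ with $g\ge0$ satisfying \eqref{eqn:ODE1}--\eqref{eqn:ODE2}, so the task is to convert the analytic data $(\lambda,g)$ into the geometric dichotomy. The backbone is a separate analysis of the open set $\Omega=\{s:g(s)>0\}$ and of its complement $Z=\{s\in(0,\ell):g(s)=0\}$ (closed, since $g$ is Lipschitz). First dispose of the degenerate case $k=0$: then $\gamma''=0$ a.e., so $\gamma$ is an affine segment, and taking $\lambda$ parallel to it and $g\equiv1$ makes \eqref{eqn:ODE1}--\eqref{eqn:ODE2} hold; hence alternative (\ref{item:2D}) is satisfied with $N=0$, and $\gamma$ is an $\infty$-elastica by Theorem~\ref{thm:DE}. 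From now on assume $k>0$.

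\emph{Analysis on $\Omega$.} Writing out the weak form of \eqref{eqn:ODE1} shows that $g\alpha\gamma''\in W^{1,\infty}(0,\ell)$ (all the terms on the right of $(g\alpha\gamma'')'=-g'\alpha\gamma''+gk^2\gamma'/\alpha-k^2\proj_{\gamma',\gamma''}^\perp(\lambda)$ lie in $L^\infty$), and \eqref{eqn:ODE2} is an a.e.\ identity. On a compact subset of $\Omega$, where $g$ is bounded below, we may divide \eqref{eqn:ODE1} by $g$ and read it as an equation for $w=\alpha\gamma''$, namely $w'=-k^2\gamma'/\alpha+(k^2/g)\proj_{\gamma',\gamma''}^\perp(\lambda)\in L^\infty_{\mathrm{loc}}(\Omega)$, so $w\in W^{1,\infty}_{\mathrm{loc}}(\Omega)$. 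Two consequences follow. First, $\frac{d}{ds}|w|^2=2w\cdot w'=0$ on $\Omega$, because $w\cdot\gamma'=0$ and $w\in\mathrm{span}\{\gamma',\gamma''\}$ is orthogonal to $\proj_{\gamma',\gamma''}^\perp(\lambda)$; hence $\alpha|\gamma''|$ equals a constant on each connected component of $\Omega$. Second, expressing $\proj_{\gamma',\gamma''}^\perp(\lambda)$ through $\gamma'$, $w$ and $\lambda$ shows $w'\in\mathrm{span}\{\gamma',\gamma'',\lambda\}$ pointwise on $\Omega$; for $v$ orthogonal to $E_j:=\mathrm{span}\{\gamma'(s_j),\gamma''(s_j),\lambda\}$ at a base point $s_j$ of a component, the pair $(v\cdot\gamma',v\cdot w)$ solves a linear ODE system with $L^\infty_{\mathrm{loc}}$ coefficients and zero initial data, so it vanishes; thus $v\cdot\gamma'\equiv0$, $v\cdot\gamma$ is constant, and $\gamma$ maps that component of $\Omega$ into the affine $3$-plane $\gamma(s_j)+E_j$.

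\emph{Analysis on $Z$ and the dichotomy.} On $Z$ one has $g'=0$ a.e.\ (the derivative of a $W^{1,1}$ function vanishes a.e.\ on its zero set), hence $\lambda\cdot\gamma''=0$ a.e.\ on $Z$ by \eqref{eqn:ODE2}; moreover $g\alpha\gamma''$ vanishes on $Z$ and is Lipschitz, so its derivative vanishes a.e.\ on $Z$, which with the weak form of \eqref{eqn:ODE1} forces $\proj_{\gamma',\gamma''}^\perp(\lambda)=0$ a.e.\ on $Z$ (here $k>0$ enters). Together with $\gamma'\cdot\gamma''=0$ these give $\gamma'=\pm\lambda$ a.e.\ on $Z$; since $\{\gamma'=\lambda\}$ and $\{\gamma'=-\lambda\}$ are closed with union of full measure in $Z$, in fact $\gamma'(Z)\subseteq\{\lambda,-\lambda\}$, and, the derivative of $\gamma'$ vanishing a.e.\ on its level sets, $\gamma''=0$ a.e.\ on $Z$; in particular $\gamma$ is a straight segment parallel to $\lambda$ on the interior of any interval in $Z$. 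If $Z=\emptyset$, then $\Omega=(0,\ell)$ is connected, $\alpha|\gamma''|$ is a constant equal to $\esssup_{[0,\ell]}\alpha|\gamma''|=k$, the previous step gives $w\in W^{1,\infty}(0,\ell)$ and hence $g'=\lambda\cdot w\in W^{1,\infty}(0,\ell)$, so $g\in W^{2,\infty}(0,\ell)$; together with the $3$-plane statement this is alternative (\ref{item:3D}). If $Z\neq\emptyset$, one shows that all straight pieces of $\gamma$ over $Z$ lie on one common line $\mathcal{L}$ parallel to $\lambda$, takes $J_1,\dots,J_N$ to be the components of $[0,\ell]\setminus\gamma^{-1}(\mathcal{L})$, and checks on each $J_i$ that $g>0$, that $\proj_{\gamma',\gamma''}^\perp(\lambda)=0$ (so the arc is planar and that plane contains direction $\lambda$), that $\alpha|\gamma''|\equiv k$, and that $\lambda\cdot\gamma''$ has the stated one-sided signs near the endpoints (immediate from $g>0$ inside, $g=0$ at the endpoint and $g'=\alpha\lambda\cdot\gamma''$). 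For the converse, alternative (\ref{item:3D}) already supplies $\lambda$ and $g$ and Theorem~\ref{thm:DE} applies; given alternative (\ref{item:2D}) with $N\ge1$, take $\lambda$ parallel to $\mathcal{L}$, set $g\equiv0$ on $\gamma^{-1}(\mathcal{L})$, and on each $J_i$ let $g$ be the primitive of $\alpha\lambda\cdot\gamma''$ vanishing at the endpoints: this is well defined because the endpoints lie on $\mathcal{L}$ and the arc is planar (so $\int_{J_i}\alpha\lambda\cdot\gamma''=0$), it equals $\pm k$ times the signed distance of $\gamma$ to $\mathcal{L}$ in the plane of the arc, hence is $\ge0$ and $\not\equiv0$ by the sign condition and because $\gamma$ never meets $\mathcal{L}$ on $J_i$, it lies in $W^{1,\infty}(0,\ell)$, and it makes \eqref{eqn:ODE1}--\eqref{eqn:ODE2} hold, since on $J_i$ equation \eqref{eqn:ODE1} reduces to $(\alpha\gamma'')'=-k^2\gamma'/\alpha$, valid for any planar arc with $\alpha|\gamma''|\equiv k$, while on $\gamma^{-1}(\mathcal{L})$ both sides vanish.

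\emph{Main obstacle.} The force of the theorem lies in the case $Z\neq\emptyset$: proving that the individually planar excursion arcs all hinge on a single line $\mathcal{L}$ rather than on various lines, that there are only finitely many of them, and that each carries curvature exactly $k$ (equivalently, no component of $\Omega$ has curvature in $(0,k)$ once $Z\neq\emptyset$). I expect to handle the first two points by extracting quantitative information from \eqref{eqn:ODE2}, which ties the displacement of an excursion to the boundary values of $g$, together with the constant-speed, constant-curvature rigidity of the arcs and the boundedness of $1/\alpha$, which should yield a positive lower bound on the length of any genuine excursion and so preclude accumulation. The curvature statement I expect to reach by analysing the behaviour near $Z$ of the reduced planar system satisfied by $(g,\lambda\cdot\gamma')$ on a component of $\Omega$, forcing either $g'\neq0$ or $\gamma'=\pm\lambda$ at each endpoint and ruling out the intermediate-curvature alternative.
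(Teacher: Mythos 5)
Your skeleton (decompose by the zero set of the multiplier, analyse $\{g>0\}$ and its complement separately, feed everything back through Theorem~\ref{thm:DE}) matches the paper's, and your treatment of the easy parts is sound: the computation on $\{g>0\}$ showing $\alpha\gamma''$ is locally Lipschitz there and confined to a $3$-plane, the converse direction, and the gluing of the lines $\mathcal{L}_i$ into one line. (One item you list as an obstacle is not one: from your own identity $w\cdot\gamma''=-w'\cdot\gamma'=k^2/\alpha$ you get $\alpha^2|\gamma''|^2=k^2$, so the constant curvature on each component is automatically $k$.) But there is a genuine error and two genuine gaps. The error: your conclusion $\gamma'(Z)\subseteq\{\lambda,-\lambda\}$ is false. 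A closed subset of full measure in the closed set $Z$ need not be all of $Z$; indeed $Z$ in general contains \emph{isolated} zeros of $g$ at which two circular arcs of opposite orientation meet and $\gamma'\neq\pm\lambda$ (cf.\ Figures~\ref{fig:ccc1}, \ref{fig:ccc2}). The paper has to introduce the larger open set $\Omega'=\{\tau\neq\pm\lambda\}\cup\Omega$ and prove $\Omega'\setminus\Omega$ is discrete (Lemma~\ref{lem:discrete}) precisely to handle these points; your picture of $Z$ as a union of straight segments parallel to $\lambda$ misses them, and the arc--arc--arc solutions fall outside your case analysis.

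The two gaps are exactly the two hard steps. First, the dichotomy: you split on $Z=\emptyset$ versus $Z\neq\emptyset$, but then in the second case you must show every component of $\{g>0\}$ is \emph{planar}, whereas your $\Omega$-analysis only yields a $3$-plane; your sketched remedy (``the reduced planar system satisfied by $(g,\lambda\cdot\gamma')$'') presupposes the planarity it is meant to prove. The missing mechanism is the paper's Lemma~\ref{lem:ODE}: if $\gamma'$, $\gamma''$, $\lambda$ are linearly independent at a single point where $g>0$, then in spherical coordinates about $\lambda$ the system has the first integral $|\varphi'|\,f\sin^2\vartheta=B>0$, which bounds $f$ (hence $g$) away from zero and forces $Z=\emptyset$; it is this global ODE argument, not the local structure, that makes ``$Z\neq\emptyset$'' imply ``every arc is planar''. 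Second, finiteness: there is no uniform positive lower bound on the length of an excursion off $\mathcal{L}$, so accumulation of ever smaller arcs cannot be excluded by a length bound. The paper instead uses $\int_{t_i}^{t_{i+1}}f'\,dt=0$ to show consecutive half-excursions have equal ``heights'' up to an error controlled by the total variation of $\alpha$, and then the series Lemma~\ref{lem:series} to show that infinitely many excursions would force infinite total length; this is the only place the bounded-variation hypothesis on $\alpha$ enters, and your sketch does not engage with it. Both steps require ideas not present in the proposal.
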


To summarise, an $\infty$-elastica is
either a concatenation of two-di\-men\-sion\-al curves or a single three-dimensional curve solving a certain system of
differential equations.
In the first case, we have additional conditions that determine the curves to a
significant degree. For example, in the
case $\alpha \equiv 1$, it is readily seen that
any planar $\infty$-elastica comprises either
\begin{enumerate}[(a)]
\item \label{item:arc-line-arc}
a circular arc, followed by several line segments and full circles of equal radius, followed by a circular arc
(cf.\ Figure \ref{fig:clc}), or
\item \label{item:arc-arc} several circular arcs of equal length (apart from the first and the last) and radius but alternating
sense of rotation (cf.\ Figure \ref{fig:ccc1} and \ref{fig:ccc2}).
\end{enumerate}

\begin{figure}[h!tb]
\centering

\begin{subfigure}[t]{0.3\textwidth}
\includegraphics[width=\textwidth]{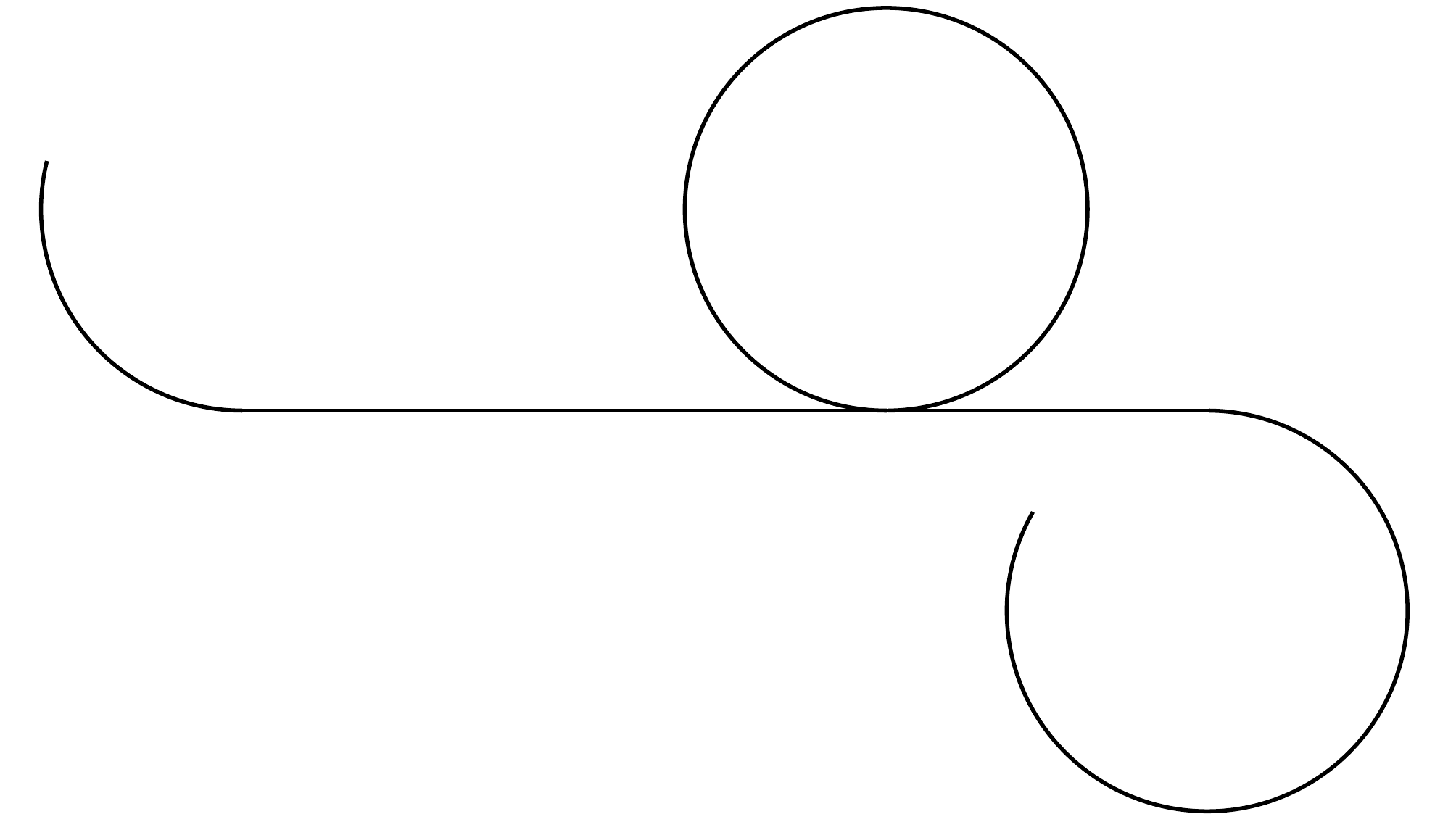}
\caption{$\lambda = (-1, 0)$}
\label{fig:clc}
\end{subfigure}
\hfill
\begin{subfigure}[t]{0.3\textwidth}
\includegraphics[width=\textwidth]{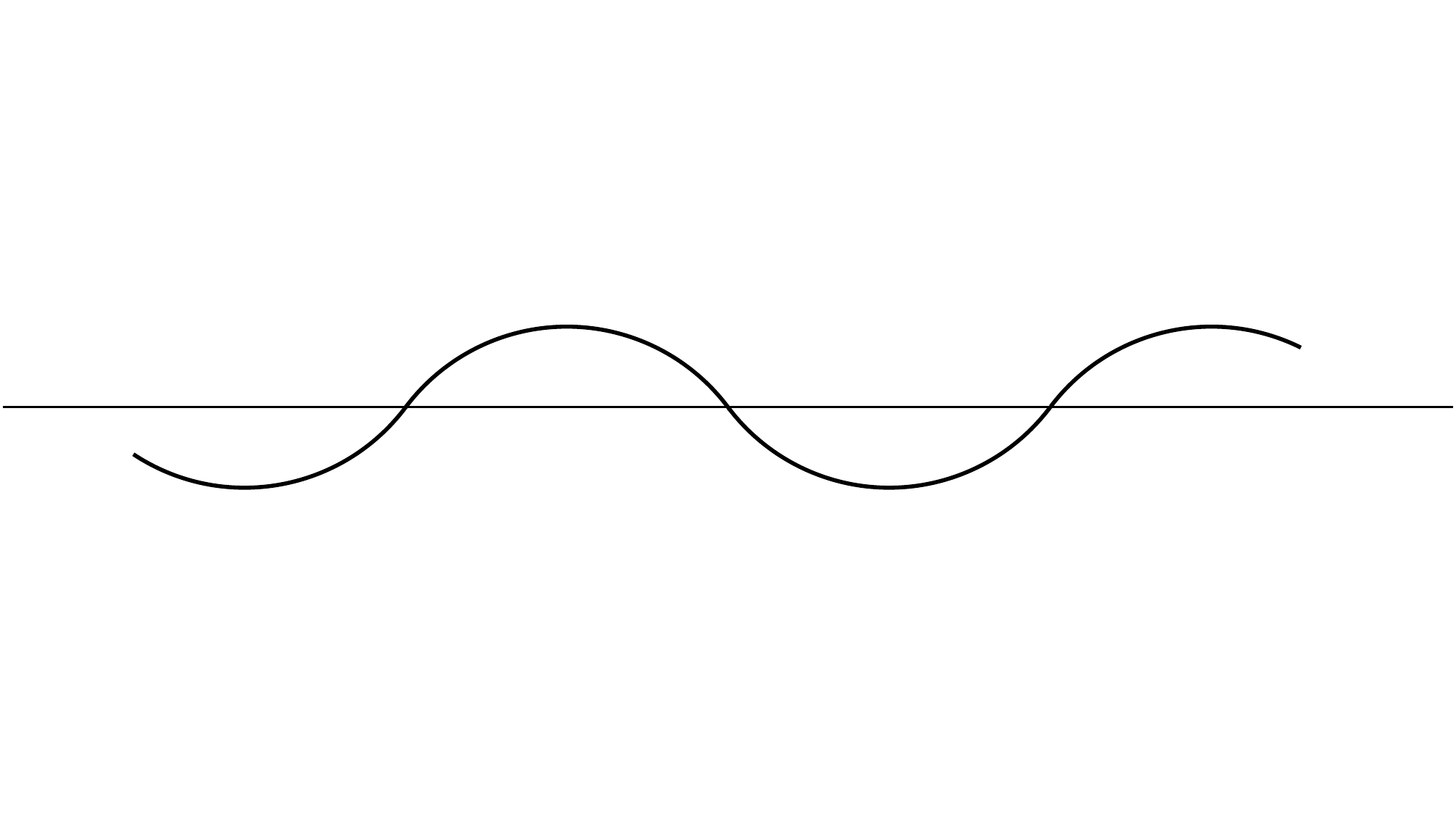}
\caption{$\lambda = (1, 0)$}
\label{fig:ccc1}
\end{subfigure}
\hfill
\begin{subfigure}[t]{0.3\textwidth}
\includegraphics[width=\textwidth]{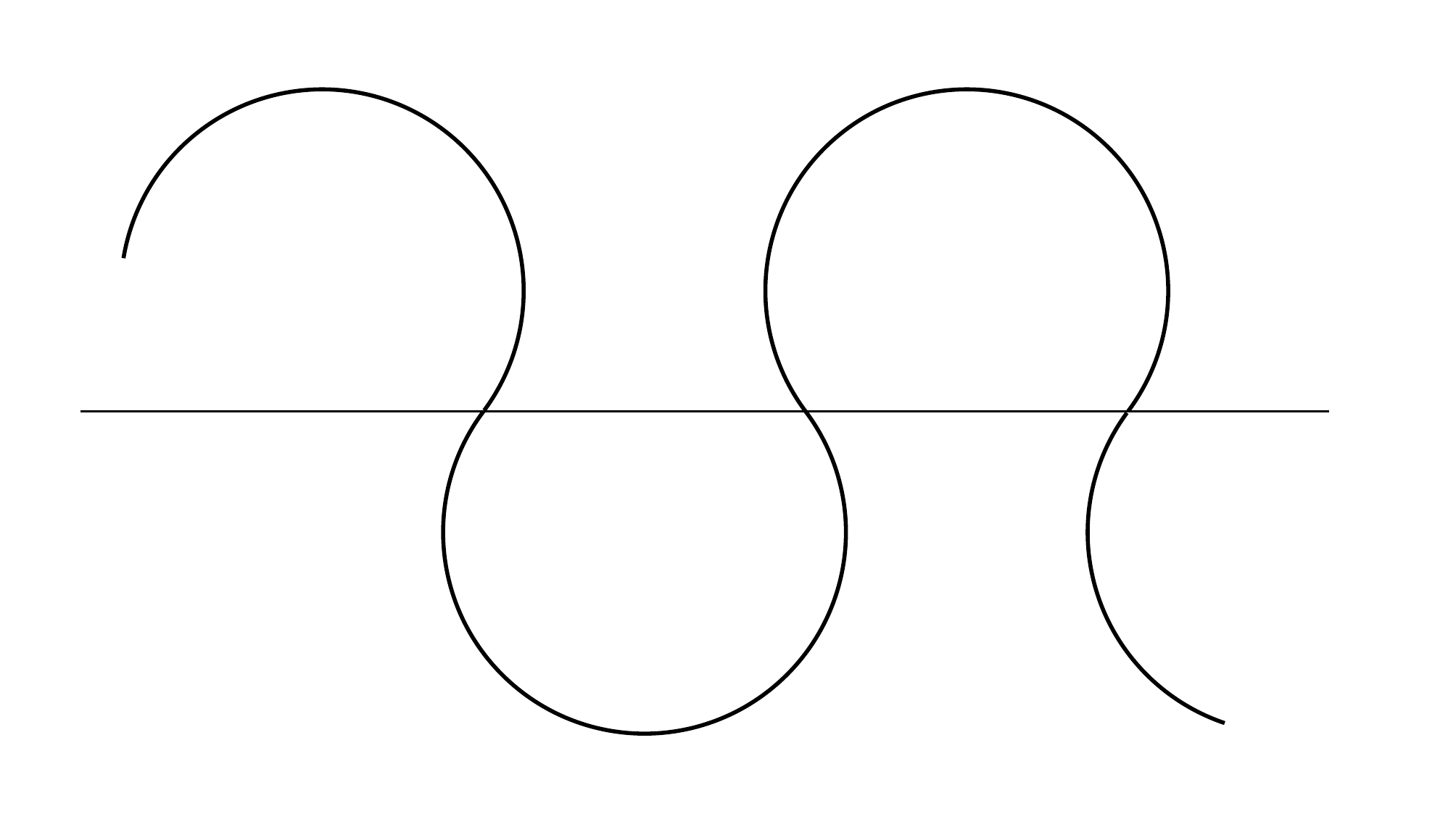}
\caption{$\lambda = (1, 0)$}
\label{fig:ccc2}
\end{subfigure}
\caption{These curves satisfy statement \ref{item:2D} of Theorem~\ref{thm:structure} for the $\lambda$
indicated. The parametrisation is from left to right in all three cases.}
\label{fig:circles&lines}

\end{figure}

Curves of both types, with the additional restriction that
they consist of at most three pieces, have been found by Dubins \cite{Dubins:57} as the solutions
of a different variational problem:
Dubins minimises the \emph{length} of a planar curve subject to boundary conditions of the
type \eqref{eqn:boundary_conditions} and subject to the constraint that the curvature
should nowhere exceed a given number.
This problem was previously considered by Markov \cite{Markov:1887}
and is therefore known as the Markov-Dubins problem.
Dubins calls the solutions \emph{$R$-geodesics}
if $1/R$ is the maximum curvature permitted. A similar result has been proved by
Sussmann \cite{Sussmann:95} in dimension $n = 3$. Just as in Theorem
\ref{thm:structure}, Sussmann finds two types of solutions: concatenations of
circles and line segments on the one hand and three-dimensional curves, that he calls helicoidal
arcs, on the other hand. The latter correspond to solutions of equation
\eqref{eqn:ODE_alpha=1}. Sussmann's proof relies on a reformulation of the problem as an
optimal control problem and on Pontryagin's maximum principle. For the problem studied in
this paper, such an approach seems to be unavailable.

It is no surprise that we obtain similar solutions, for the two problems are connected.

\begin{proposition}[$R$-geodesics minimise $\K_1$] \label{prop:shortest_curves}
Let $R > 0$. Suppose that $\gamma \colon [0, \ell] \to \R^n$ parametrises
an $R$-geodesic by arc length.
Then $\gamma$ minimises $\K_1$ subject to its boundary data.
\end{proposition}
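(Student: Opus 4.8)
The plan is to argue by contradiction, using that an $R$-geodesic is by definition a curve of \emph{minimal} length among all curves in $\R^n$ with the same boundary data whose curvature does not exceed $1/R$, together with the known classification of such minimisers. By the results of Dubins \cite{Dubins:57} (for $n = 2$) and Sussmann \cite{Sussmann:95} (for $n = 3$), every $R$-geodesic is a concatenation of finitely many pieces, each of which is a line segment or an arc of curvature exactly $1/R$ (a circular arc of radius $R$, or a helicoidal arc). In particular $\esssup_{[0,\ell]} |\gamma''| \in \{0, 1/R\}$ for every $R$-geodesic $\gamma$, the value being $0$ precisely when $\gamma$ is a single line segment.

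First I would note that, since $\gamma$ parametrises an $R$-geodesic, it satisfies $|\gamma''| \le 1/R$ almost everywhere, so $\K_1(\gamma) \le 1/R$. Suppose now, for contradiction, that there is some $\tilde\gamma \in \mathcal{G}$ with $\K_1(\tilde\gamma) < \K_1(\gamma)$. Then $\esssup_{[0,\ell]} |\tilde\gamma''| = \K_1(\tilde\gamma) < \K_1(\gamma) \le 1/R$, so $\tilde\gamma$ also obeys the curvature bound for radius $R$. Since every element of $\mathcal{G}$ has length $\ell$ and satisfies the boundary conditions \eqref{eqn:boundary_conditions}, the curves $\gamma$ and $\tilde\gamma$ have the same length and the same boundary data; as $\gamma$ realises the least length among all curves satisfying the radius-$R$ curvature bound with these boundary data, so does $\tilde\gamma$. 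Hence $\tilde\gamma$ is itself an $R$-geodesic.

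Applying the classification to $\tilde\gamma$ gives $\K_1(\tilde\gamma) \in \{0, 1/R\}$, and since $\K_1(\tilde\gamma) < 1/R$ we conclude $\K_1(\tilde\gamma) = 0$, so $\tilde\gamma'$ is constant and $\tilde\gamma$ is a line segment. Then $\ell = |\tilde\gamma(\ell) - \tilde\gamma(0)| = |a_2 - a_1|$. But $\gamma$ is also a curve of length $\ell = |a_2 - a_1|$ joining $a_1$ to $a_2$, so equality holds in $|a_2 - a_1| = \bigl|\int_0^\ell \gamma'\,ds\bigr| \le \int_0^\ell |\gamma'|\,ds = \ell$, which forces $\gamma'$ to be constant; thus $\gamma$ is a line segment and $\K_1(\gamma) = 0$. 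This contradicts $0 \le \K_1(\tilde\gamma) < \K_1(\gamma) = 0$, and the proof is complete.

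The only genuinely non-elementary ingredient is the classification of $R$-geodesics, which is where the theorems of Dubins and Sussmann enter; the point to watch is that their length minimisation must be understood over a class of admissible curves containing $\mathcal{G}$ — arc-length parametrisations in $W^{2,\infty}$ with curvature at most $1/R$, equivalently $C^{1,1}$ curves with that bound — so that the minimality of the $R$-geodesic $\gamma$ is genuinely tested against the competitor $\tilde\gamma$. Everything else in the argument is routine.
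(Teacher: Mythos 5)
Your argument has a genuine gap: it leans on the classification of $R$-geodesics due to Dubins and Sussmann, in particular on the claim that every $R$-geodesic satisfies $\esssup_{[0,\ell]}|\gamma''|\in\{0,1/R\}$. Those classification theorems are only available for $n=2$ and $n=3$, whereas the proposition (and the paper's proof of it) covers arbitrary $n\ge 2$; for $n\ge 4$ no such classification is cited or available, so the step ``applying the classification to $\tilde\gamma$'' has no support there. Moreover, the proposition is used in Section \ref{sect:Dubins} precisely to give an \emph{alternative} proof of Dubins's and Sussmann's theorems via Theorem \ref{thm:structure}; deducing the proposition from those theorems makes that application circular, so even in dimensions $2$ and $3$ your route defeats the purpose for which the statement is proved. (Granting the classification in those dimensions, your chain of deductions --- $\tilde\gamma$ has minimal length, hence is an $R$-geodesic, hence has $\K_1(\tilde\gamma)\in\{0,1/R\}$, hence is a segment, hence $\gamma$ is a segment --- is itself sound.)

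The paper's proof avoids any structure theorem and works in all dimensions. Assuming $\gamma$ does not minimise $\K_1$ (the case $R>1/\K_1(\gamma)$ being trivial), choose coordinates so that $a_1,a_2\in\{0\}^{n-1}\times\R$ and pick a competitor $\hat\gamma\in\mathcal{G}$ with $\K_1(\hat\gamma)<\K_1(\gamma)$. The map $\Phi_\epsilon(x)=\bigl(x'/(1+\epsilon|x'|^2),x_n\bigr)$ fixes the axis, has derivative $\id_{\R^n}$ on it (so the boundary conditions \eqref{eqn:boundary_conditions} are preserved), strictly contracts non-axial tangent vectors at non-axial points, and converges to the identity in $C^2$ on compact sets as $\epsilon\to 0$. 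Since $\gamma$ is not a minimiser, $\hat\gamma$ cannot lie on the axis, so $\Phi_\epsilon\circ\hat\gamma$ has length strictly less than $\ell$, while for $\epsilon$ small its curvature is still at most $\K_1(\gamma)\le 1/R$. This produces a strictly shorter admissible curve with the same boundary data, contradicting the minimal-length property that defines an $R$-geodesic. If you want a self-contained proof, this perturbation is the missing idea: it uses only the definition of an $R$-geodesic, not any classification of them.
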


As a consequence, we obtain an alternative proof of Dubins's and Sussmann's main results.
Theorem \ref{thm:structure} will initially give less information in case \ref{item:2D}, but the proofs
can then be completed with elementary arguments and some of Dubins's lemmas.
We give a sketch of these arguments in Section \ref{sect:Dubins}.

The Markov-Dubins problem, and variants thereof \cite{Reeds-Shepp:90}, have found applications in motion planning \cite{Laumond-Sekhavat-Lamiraux:98}.

There is a connection to another classical problem.
In 1925, Schmidt \cite{Schmidt:25} studied open spacial curves of fixed length
that minimise the length of the chord under the constraint that the curvature is
bounded pointwise by a given function (that we identify with $1/\alpha$). He generalised a result
of A. Schur \cite{Schur:21}, which in turn refines an unpublished result ascribed by both authors
to Schwarz. Another proof of this result may be found in a book of Blaschke \cite[\S 31]{Blaschke:45},
and a proof in English is given by S. S. Chern \cite{Chern:67}. The solutions
of this problem are obviously minimisers of $\K_\alpha$, too, even under weaker
boundary conditions.
Schmidt concludes that any curve with shortest chord subject to his curvature constraint
must be planar and convex. This can of course not be expected for
the variational problem with boundary conditions \eqref{eqn:boundary_conditions} in general.

The strategy for the proofs of Theorem \ref{thm:DE}--\ref{thm:structure} is to first approximate
the $L^\infty$-norm of the curvature by $L^p$-norms for $p < \infty$ and then let
$p \to \infty$. For $p < \infty$, we obtain a similar variational problem, which
gives rise to an Euler-Lagrange equation. When we pass to the limit $p \to \infty$,
the Euler-Lagrange equation is preserved in some form and eventually gives rise to
the system \eqref{eqn:ODE1}, \eqref{eqn:ODE2}. We also obtain some information
about the structure of solutions from the limit. A detailed analysis of the differential
equations is also necessary for Theorem \ref{thm:structure}.

To my knowledge, this is the first study of the above variational problem in the
literature, although, as already discussed, several related problems have been studied
in significant detail. There is also extensive work on variational problems involving
an $L^\infty$-norm in general, going back to the work of
Aronsson \cite{Aronsson:65, Aronsson:66, Aronsson:67}. An introduction with many further
references is given in a book by Katzourakis \cite{Katzourakis:15}.
Higher order problems have been studied more recently as well
\cite{Aronsson:10, Moser-Schwetlick:12, Sakellaris:17, Katzourakis-Pryer:18, Katzourakis-Pryer:19, Katzourakis-Moser:19, Katzourakis-Parini:17}, but there is a much smaller body of literature.
An approximation by $L^p$-norms, as in this paper, is common for variational problems in $L^\infty$,
but subsequently, most of the literature relies on methods and ideas quite different
from what is used here. Nevertheless, our approach has previously been deployed, too
\cite{Moser-Schwetlick:12, Sakellaris:17, Katzourakis-Moser:19, Katzourakis-Parini:17}. For comparison,
the paper of
Katzourakis and the author \cite{Katzourakis-Moser:19} studies functions
$u \colon \Omega \to \R$, for some domain $\Omega \subseteq \R^n$, that minimise
$\esssup_{x \in \Omega} |F(x, \Delta u(x))|$ for a given function $F$ under prescribed boundary data.
The paper describes the structure of minimisers, derives a system of partial differential
equations that characterises them, and proves that minimisers are unique.

For the problem studied here, it cannot be expected that minimisers are unique in
general, and this is one of the reasons why the previous methods are insufficient.
For example, if the boundary data are symmetric with respect to a reflection
(for $n = 2$) or rotation about a line (for $n > 2$), but $\ell$ is too long to admit
a straight line segment, then the symmetry of the problem automatically gives rise
to multiple solutions. Therefore, if we use approximations to the variational problem,
we will typically recover some solution in the limit, but not necessarily all possible solutions.
We overcome this difficulty by adding another term that penalises the distance
from a \emph{given} solution. This is the main novelty in the first part of our analysis. The
penalisation corresponds to the last term in the inequality of Definition \ref{def:infty-elastica},
and thus, although initially introduced as a technical device, proves to be interesting in its
own right, as it gives rise to a variational problem \emph{equivalent} to the system of
differential equations in Theorem~\ref{thm:DE}.

The second part of our analysis, which leads to the proof of Theorem \ref{thm:structure},
is completely new. The underlying method may be restricted to this and similar problems, but
our theory provides one of the first examples (the equally restrictive and more elementary theory of
Katzourakis-Pryer \cite[Section 8]{Katzourakis-Pryer:18} being the only other example I am aware of),
where a non-trivial second-order variational problem in $L^\infty$ can be
solved exhaustively.

\section{Reparametrisation and approximation} \label{sect:reparametrisation}

In this section, we prepare the ground for the proofs of Theorems \ref{thm:DE}--\ref{thm:structure}.
We first reformulate the problem by reparametrising the curves appropriately.
Then we discuss an approximation of the $L^\infty$-norm by $L^p$-norms.
We also add a penalisation term to the functionals, the purpose of which is to
guarantee convergence to a \emph{given} (rather than an arbitrary) solution of
the problem as $p \to \infty$. At the same time, we shift our main attention from a curve in $\R^n$ to
its tangent vector field.

Recall that we previously considered parametrisations $\gamma \colon [0, \ell] \to \R^n$ by
arc length satisfying the boundary conditions \eqref{eqn:boundary_conditions}.
From now on, a parametrisation with speed $\alpha$ is more convenient.
Therefore, define
\[
\psi(s) = \int_0^s \frac{d\sigma}{\alpha(\sigma)}, \quad 0 \le s \le \ell,
\]
and $L = \psi(\ell)$. Also consider the inverse $\phi = \psi^{-1} \colon [0, L] \to [0, \ell]$
and $\beta = \alpha \circ \phi$.
If $\gamma$ is a parametrisation by arc length, then the reparametrisation $c \colon [0, L] \to \R^n$,
given by $c(t) = \gamma(\phi(t))$, satisfies $|c'(t)| = \phi'(t) = 1/\psi'(\phi(t)) = \beta(t)$.

We now consider the tangent vector field along $c$, normalised to unit length. Thus
let $\tau \colon [0, L] \to S^{n - 1}$ be defined by $\tau(t) = c'(t)/\beta(t)$.
(An equivalent definition is $\tau(t) = \gamma'(\phi(t))$.)
Then \eqref{eqn:boundary_conditions} implies that
\begin{equation} \label{eqn:boundary_condition1}
\tau(0) = T_1 \quad \text{and} \quad \tau(L) = T_2.
\end{equation}
Setting $a = a_2 - a_1$, we also obtain the condition
\begin{equation} \label{eqn:boundary_condition2}
\int_0^L \beta(t) \tau(t) \, dt = a.
\end{equation}

Conversely, if we have $\tau \in W^{1, \infty}((0, L); S^{n - 1})$ satisfying
\eqref{eqn:boundary_condition1} and \eqref{eqn:boundary_condition2}, then
$\gamma \in \mathcal{G}$ can be reconstructed from $\tau$ by
\[
\gamma(s) = a_1 + \int_0^s \tau(\psi(\sigma)) \, d\sigma, \quad 0 \le s \le \ell.
\]
The functional $\K_\alpha$ can be written in terms of $\tau$ as follows:
\[
\K_\alpha(\gamma) = \esssup |\tau'|.
\]
Hence in order to study the above problem, it suffices to consider $\tau$ and to study the functional
\[
K_\infty(\tau) = \esssup |\tau'|
\]
under the boundary conditions \eqref{eqn:boundary_condition1} and the integral
constraint \eqref{eqn:boundary_condition2}. We note that $\gamma$ is an $\infty$-elastica
if, and only if, $\tau$ has the following property.

\begin{definition}
Suppose that $\tau \in W^{1, \infty}((0, L); S^{n - 1})$ satisfies the boundary conditions
\eqref{eqn:boundary_condition1} and the constraint \eqref{eqn:boundary_condition2}.
We say that $\tau$ is a \emph{pseudo-minimiser}
of $K_\infty$ if there exists $m \in \R$ such that
\[
K_\infty(\tau) \le K_\infty(\tilde{\tau}) + \frac{m}{2L} \int_0^L \beta |\tilde{\tau} - \tau|^2 \, dt
\]
for any other $\tilde{\tau} \in W^{1, \infty}((0, L); S^{n - 1})$ satisfying
\eqref{eqn:boundary_condition1} and \eqref{eqn:boundary_condition2}.
\end{definition}

One of the key tools for the proofs of Theorems \ref{thm:DE}--\ref{thm:structure} is an approximation of $K_\infty$ by
\[
K_p(\tau) = \left(\frac{1}{L} \int_0^L |\tau'|^p \, dt\right)^{1/p}
\]
for $p \in [2, \infty)$. We eventually consider the limit as $p \to \infty$ to recover $K_\infty$.
Furthermore, given $\tau_0 \in W^{1, \infty}((0, L); S^{n - 1})$ and $\mu \ge 0$, we consider the functionals
\[
J_p^\mu(\tau; \tau_0) = K_p(\tau) + \frac{\mu}{2L} \int_0^L \beta |\tau - \tau_0|^2 \, dt.
\]
In the proofs of Theorems \ref{thm:DE}--\ref{thm:structure}, we will assume that $\tau_0$ is a pseudo-minimiser of $K_\infty$.
Minimisers of $J_p^\mu(\blank; \tau_0)$ can then be found with the direct method, and the assumption
will guarantee that they converge to $\tau_0$ as $p \to \infty$. This will eventually allow some conclusions about $\tau_0$.
Indeed, the following preliminary observations are almost immediate from the structure of the variational problem.

\begin{proposition} \label{prop:p-approximation}
Let $\mu > 0$ and $\tau_0 \in W^{1, \infty}((0, L); S^{n - 1})$ be given. For every $p \in [2, \infty)$,
suppose that $\tau_p \in W^{1, p}((0, L); S^{n - 1})$ is a minimiser of $J_p^\mu(\blank; \tau_0)$
subject to the constraints \eqref{eqn:boundary_condition1} and \eqref{eqn:boundary_condition2}
and let $k_p = K_p(\tau_p)$.
\begin{enumerate}
\item \label{item:Euler-Lagrange} Then there are Lagrange multipliers
$\Lambda_p \in \R^n$ such that
\begin{equation} \label{eqn:Euler-Lagrange}
\frac{d}{dt} \left(|\tau_p'|^{p - 2} \tau_p'\right) + |\tau_p'|^p \tau_p = k_p^{p - 1} \beta \bigl(\Lambda_p - (\Lambda_p \cdot \tau_p) \tau_p - \mu \tau_0 + \mu (\tau_0 \cdot \tau_p)\tau_p\bigr)
\end{equation}
weakly in $(0, L)$.
\item \label{item:convergence}
If $\tau_0$ satisfies \eqref{eqn:boundary_condition1} and \eqref{eqn:boundary_condition2}
and is a pseudo-minimiser of $K_\infty$, then there exists $\mu_0 > 0$ such that the
following holds true. If $\mu \ge \mu_0$, then
$\tau_p \rightharpoonup \tau_0$ weakly in $W^{1, q}((0, L); \R^n)$ for every $q < \infty$
and $k_p \to K_\infty(\tau_0)$ as $p \to \infty$.
\end{enumerate}
\end{proposition}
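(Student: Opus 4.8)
The plan is to treat the two parts separately, as they use different ideas.

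For part~\ref{item:Euler-Lagrange}, I would write down the Euler--Lagrange equation for the constrained minimisation problem by the Lagrange multiplier rule. The functional $J_p^\mu(\blank;\tau_0)$ is differentiable on $W^{1,p}((0,L);\R^n)$, and the constraints are: the pointwise constraint $|\tau|\equiv 1$ (i.e.\ $\tau$ maps into $S^{n-1}$), the endpoint conditions \eqref{eqn:boundary_condition1}, and the integral constraint \eqref{eqn:boundary_condition2}. First I would compute the first variation of $K_p(\tau)^p = \frac1L\int_0^L|\tau'|^p\,dt$ and of the penalisation term in a direction $\xi\in C_0^\infty((0,L);\R^n)$ that is tangent to the constraints, obtaining (after multiplying through by the appropriate power of $k_p = K_p(\tau_p)$ to convert the derivative of $K_p$ into the derivative of $K_p^p$) the terms $\frac{d}{dt}(|\tau_p'|^{p-2}\tau_p')$ and $\mu\beta\tau_0$. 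The pointwise constraint $|\tau_p|\equiv 1$ contributes a Lagrange multiplier that is a function (not a constant) times $\tau_p$; this is the standard mechanism that produces the $|\tau_p'|^p\tau_p$ term (by differentiating $|\tau_p|^2\equiv 1$ twice one identifies the multiplier, exactly as for harmonic maps into spheres) together with the terms proportional to $\tau_p$ coming from projecting $\Lambda_p$ and $\tau_0$ onto the tangent space. The integral constraint \eqref{eqn:boundary_condition2} contributes a constant vector Lagrange multiplier, which (after rescaling) is the $\Lambda_p$ appearing on the right-hand side. Collecting everything and rearranging gives \eqref{eqn:Euler-Lagrange}. The boundary conditions \eqref{eqn:boundary_condition1} are handled by using test functions vanishing at the endpoints, so they do not contribute boundary terms.

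For part~\ref{item:convergence}, the argument runs through the pseudo-minimiser inequality. Set $k_\infty = K_\infty(\tau_0)$. By minimality of $\tau_p$, for every admissible $\tilde\tau$ (in particular for $\tilde\tau=\tau_0$) we have $J_p^\mu(\tau_p;\tau_0)\le J_p^\mu(\tau_0;\tau_0) = K_p(\tau_0)\le K_\infty(\tau_0) = k_\infty$, using $K_p\le K_\infty$. Hence $k_p = K_p(\tau_p)\le k_\infty$ and the penalisation term $\frac{\mu}{2L}\int_0^L\beta|\tau_p-\tau_0|^2\,dt\le k_\infty$ are bounded uniformly in $p$. In particular $\int_0^L|\tau_p'|^p\,dt\le L k_\infty^p$, so for any fixed $q<\infty$ and $p\ge q$, Hölder's inequality gives a uniform $W^{1,q}$ bound on $\tau_p$ (the $L^\infty$ bound $|\tau_p|\equiv 1$ plus $\|\tau_p'\|_{L^q}\le L^{1/q-1/p}\|\tau_p'\|_{L^p}\le L^{1/q}k_\infty$). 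Thus along a subsequence $\tau_p\rightharpoonup\tau_*$ weakly in $W^{1,q}$ for every $q$, with $\tau_*\in W^{1,\infty}((0,L);S^{n-1})$ (by weak lower semicontinuity of each $K_q$, $K_q(\tau_*)\le\liminf K_q(\tau_p)\le k_\infty$ for all $q$, hence $K_\infty(\tau_*)\le k_\infty$), and $\tau_*$ satisfies \eqref{eqn:boundary_condition1}, \eqref{eqn:boundary_condition2} by the compact embedding into $C^0$ and weak continuity of the linear constraint. Now apply the pseudo-minimiser inequality to $\tau_*$:
\[
k_\infty = K_\infty(\tau_0)\le K_\infty(\tau_*) + \frac{m}{2L}\int_0^L\beta|\tau_*-\tau_0|^2\,dt.
\]
On the other hand, from $J_p^\mu(\tau_p;\tau_0)\le k_\infty$ and weak lower semicontinuity, letting $p\to\infty$ and choosing $q$ larger and larger,
\[
K_\infty(\tau_*) + \frac{\mu}{2L}\int_0^L\beta|\tau_*-\tau_0|^2\,dt\le \liminf_{p\to\infty}\Bigl(k_p + \frac{\mu}{2L}\int_0^L\beta|\tau_p-\tau_0|^2\,dt\Bigr)\le k_\infty.
\]
Subtracting and combining the two displays yields $(\mu-m)\frac{1}{2L}\int_0^L\beta|\tau_*-\tau_0|^2\,dt\le 0$. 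Choosing $\mu_0 > m$ (note $m$ depends only on $\tau_0$, not on $p$ or $\mu$), for any $\mu\ge\mu_0$ this forces $\int_0^L\beta|\tau_*-\tau_0|^2\,dt = 0$, and since $\beta\ge 1/\|\alpha^{-1}\|_\infty^{-1}>0$ we get $\tau_* = \tau_0$. As the limit is independent of the subsequence, the whole family converges: $\tau_p\rightharpoonup\tau_0$ weakly in $W^{1,q}$ for every $q<\infty$. Finally, the chain $k_\infty = K_\infty(\tau_0)\le\liminf_p k_p\le\limsup_p k_p\le k_\infty$ (lower bound from lower semicontinuity applied to $K_q$ and then $q\to\infty$, upper bound from $k_p\le k_\infty$) gives $k_p\to K_\infty(\tau_0)$.

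The main obstacle is the lower semicontinuity argument that identifies $K_\infty(\tau_*)$ as a liminf: one only has weak $W^{1,q}$ convergence for each finite $q$, so $K_\infty(\tau_*) = \sup_q K_q(\tau_*)\le\sup_q\liminf_p K_q(\tau_p)\le\liminf_p\sup_q K_q(\tau_p)$ requires care, and one must also control that $\liminf_p K_p(\tau_p)$ (the diagonal quantity) dominates $\sup_q\liminf_p K_q(\tau_p)$ — this follows because $K_q(\tau_p)\le K_p(\tau_p)$ for $q\le p$, so $\liminf_p K_q(\tau_p)\le\liminf_p K_p(\tau_p)$ for every fixed $q$. A secondary technical point is ensuring the Lagrange multiplier rule applies despite the pointwise sphere constraint; this is routine but must be stated, e.g.\ by noting that admissible variations at a point of $S^{n-1}$ fill the tangent space and the constraint manifold is smooth.
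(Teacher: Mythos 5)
Your proposal is correct and follows essentially the same route as the paper: standard Lagrange multipliers (a function multiplier for the sphere constraint as in harmonic map theory, a constant vector for the integral constraint) for part 1, and for part 2 the uniform bounds from minimality plus H\"older, weak compactness, lower semicontinuity combined with the pseudo-minimiser inequality to force $(\mu-m)\int_0^L\beta|\tau_*-\tau_0|^2\,dt\le 0$, and subsequence-independence of the limit. The only cosmetic differences are that the paper tests minimality against $\tau_q$ for $q\ge p$ (its monotonicity chain) rather than only against $\tau_0$, and deduces $k_p\to K_\infty(\tau_0)$ via $\lim_p J_p^\mu(\tau_p;\tau_0)$ and strong $L^2$ convergence rather than your direct squeeze; both variants are valid, and your handling of the diagonal issue $K_q(\tau_p)\le K_p(\tau_p)$ for $q\le p$ is exactly the point the paper's argument also relies on.
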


\begin{proof}
The Euler-Lagrange equation \eqref{eqn:Euler-Lagrange} is derived with
standard computations. The only feature that is perhaps unusual is the constraint
$\tau_p(t) \in S^{n - 1}$ for $t \in [0, L]$, but this sort of constraint is common
in the theory of harmonic maps and it is explained, e.g., in a book by
Simon \cite{Simon:96} how to deal with it. We therefore omit the details in the
proof of statement \ref{item:Euler-Lagrange}.

Next we note that by the choice of $\tau_p$ and by H\"older's inequality,
for any pair of numbers $p, q \in (1, \infty)$ with
$p \le q$, we find the inequalities
\begin{equation} \label{eqn:monotonicity}
J_p^\mu(\tau_p; \tau_0) \le J_p^\mu(\tau_q; \tau_0)\le J_q^\mu(\tau_q; \tau_0) \le K_q(\tau_0) \le K_\infty(\tau_0).
\end{equation}
So for any $q \in [2, \infty)$, the one-parameter family $(\tau_p)_{q \le p < \infty}$ is bounded in $W^{1, q}((0, L); \R^n)$.
Therefore, there exists a sequence $p_i \to \infty$ such that $\tau_{p_i}$ converges weakly in $W^{1, q}((0, L); \R^n)$,
for every $q < \infty$, to a limit
\[
\tau_\infty \in \bigcap_{q < \infty} W^{1, q}((0, L); S^{n - 1}).
\]
Clearly $\tau_\infty$ will satisfy \eqref{eqn:boundary_condition1} and \eqref{eqn:boundary_condition2} again.
By the lower semicontinuity of the $L^q$-norm with respect to weak convergence and by \eqref{eqn:monotonicity},
\begin{equation} \label{eqn:upper_bound}
J_\infty^\mu(\tau_\infty; \tau_0) = \lim_{q \to \infty} J_q^\mu(\tau_\infty; \tau_0) \le \lim_{q \to \infty} \liminf_{i \to \infty} J_q^\mu(\tau_{p_i}; \tau_0) \le K_\infty(\tau_0).
\end{equation}

If there exists $m > 0$ such that
\[
K_\infty(\tau_0) \le K_\infty(\tau) + \frac{m}{2L} \int_0^L \beta |\tau - \tau_0|^2 \, dt
\]
for all $\tau \in W^{1, \infty}((0, L); S^{n - 1})$ satisfying \eqref{eqn:boundary_condition1} and \eqref{eqn:boundary_condition2},
then \eqref{eqn:upper_bound} implies that
\[
J_\infty^\mu(\tau_\infty; \tau_0) \le J_\infty^m(\tau_\infty; \tau_0).
\]
Thus
\[
(\mu - m) \int_0^L \beta |\tau_\infty - \tau_0|^2 \, dt \le 0.
\]
If we choose $\mu > m$, this means that $\tau_\infty = \tau_0$. In particular, the limit
is then independent of the choice of the sequence $(p_i)_{i \in \N}$, and therefore we have in fact weak convergence
of $\tau_p$ to $\tau_\infty = \tau_0$ in $W^{1, q}((0, L); \R^n)$ for every $q < \infty$.

The inequalities in \eqref{eqn:monotonicity} also imply that
\[
\lim_{p \to \infty} J_p^\mu(\tau_p; \tau_0) \le K_\infty(\tau_0),
\]
in particular that the limit exists. On the other hand,
as we now know that $\tau_\infty = \tau_0$, we can go back to \eqref{eqn:upper_bound} and conclude that
\[
K_\infty(\tau_0) \le \lim_{q \to \infty} \liminf_{i \to \infty} J_q^\mu(\tau_{p_i}; \tau_0) \le \lim_{p \to \infty} J_p^\mu(\tau_p; \tau_0).
\]
Hence $K_\infty(\tau_0) = \lim_{p \to \infty} J_p^\mu(\tau_p; \tau_0)$. Since
the weak convergence $\tau_p \rightharpoonup \tau_0$ in $W^{1, 2}((0, L); \R^n)$
implies strong convergence in $L^2((0, L); \R^n)$ as well, it follows that
$K_\infty(\tau_0) = \lim_{p \to \infty} k_p$.
\end{proof}

Eventually we will need a careful analysis of the Euler-Lagrange equation
\eqref{eqn:Euler-Lagrange} for the proofs of Theorems \ref{thm:DE}--\ref{thm:structure}. To this
end, we need to know that the Lagrange multipliers $\Lambda_p$ do not grow
too quickly as $p \to \infty$. We prove the following.

\begin{lemma} \label{lem:growth}
Suppose that $\tau_p \in W^{1, p}((0, L); S^{n - 1})$ and let $k_p = K_p(\tau_p)$. Suppose that
$\limsup_{p \to \infty} k_p < \infty$ and there exist $\Lambda_p \in \R^n$ such that
\eqref{eqn:Euler-Lagrange} holds weakly in $(0, L)$ for every $p \in [2, \infty)$.
Then either
\[
\limsup_{p \to \infty} \left(p^{-6} |\Lambda_p|\right) < \infty
\]
or there exists a sequence $p_i \to \infty$ such that
$\tau_{p_i}$ converges uniformly to a constant vector as $i \to \infty$.
\end{lemma}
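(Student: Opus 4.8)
The plan is to test the weak form of \eqref{eqn:Euler-Lagrange} against carefully chosen vector fields $\xi$ tangent to $S^{n-1}$ along $\tau_p$, in order to extract scalar information about $\Lambda_p$ that can be controlled by the known quantities $k_p$ and the fixed geometry (the length $L$, the weight $\beta$, the boundary data). The right-hand side of \eqref{eqn:Euler-Lagrange} is, up to the factor $k_p^{p-1}\beta$, the tangential projection of $\Lambda_p - \mu\tau_0$ onto the orthogonal complement of $\tau_p$; the idea is to choose test fields that isolate this tangential part, deduce that $|\proj_{\tau_p}^\perp(\Lambda_p)|$ is either controlled or forces $\tau_p$ to be nearly constant, and then separately bound the component of $\Lambda_p$ parallel to $\tau_p$.

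First I would record the a priori bounds available from Proposition~\ref{prop:p-approximation} (or rather from \eqref{eqn:monotonicity}): since $\limsup_{p\to\infty} k_p < \infty$, we have a uniform bound $k_p \le K$ for large $p$, hence $\int_0^L |\tau_p'|^p\,dt \le L K^p$, and by H\"older a uniform bound on $\|\tau_p'\|_{L^q}$ for every fixed $q$; in particular $\|\tau_p'\|_{L^2} \le CK$ and the $\tau_p$ lie in a bounded subset of $W^{1,2}$, so they are uniformly Hölder continuous with a modulus that does not blow up. Next, testing \eqref{eqn:Euler-Lagrange} with $\xi = \zeta\,\eta_p$ where $\eta_p = \proj_{\tau_p}^\perp(e)$ for a fixed unit vector $e$ and $\zeta \in C_0^\infty(0,L)$, and integrating by parts, I would obtain an identity of the schematic form
\[
\int_0^L k_p^{p-1}\beta\,\zeta\,\bigl(\Lambda_p\cdot\eta_p - \mu\,\tau_0\cdot\eta_p\bigr)\,dt
= \int_0^L \Bigl(|\tau_p'|^{p-2}\tau_p'\cdot(\zeta\eta_p)' + |\tau_p'|^p\,\tau_p\cdot\zeta\eta_p\Bigr)\,dt .
\]
Here $\tau_p \cdot \eta_p = 0$, so the last term drops, and $(\zeta\eta_p)'$ involves $\tau_p'$ and $\zeta'$ only; using the uniform $L^q$ bounds on $\tau_p'$ and the fact that $\int_0^L |\tau_p'|^{p-1}\,dt \le (LK^p)^{(p-1)/p}L^{1/p} \approx LK^{p-1}$, the right-hand side is bounded by $C k_p^{p-1}$ times a constant depending only on $\zeta$ and the fixed data, which after dividing by $k_p^{p-1}$ (legitimate once $k_p$ is bounded below away from zero — and if $k_p \to 0$ along a subsequence then $\tau_p$ is nearly constant, giving the second alternative directly) yields a bound on $\int_0^L \beta\zeta\,(\Lambda_p\cdot\eta_p)\,dt$ uniform in $p$. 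Varying $e$ over a basis and $\zeta$ over a fixed family of bump functions, this pins down the tangential component $\proj_{\tau_p}^\perp(\Lambda_p)$ in a weak (averaged) sense; to upgrade this to a genuine pointwise or $L^\infty$-type bound on $|\proj_{\tau_p}^\perp(\Lambda_p)|$ one exploits that $\Lambda_p$ is a constant vector, so $\proj_{\tau_p(t)}^\perp(\Lambda_p)$ fails to be large in absolute value only if $\tau_p(t)$ is nearly aligned with $\pm\Lambda_p/|\Lambda_p|$ for \emph{all} $t$ — and since $\tau_p$ is uniformly continuous, "nearly aligned at enough points" forces "nearly aligned everywhere," i.e.\ $\tau_p$ close to a constant. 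This dichotomy is the source of the two alternatives in the statement.

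The component $\Lambda_p \cdot \tau_p(t)$ must be handled differently, since the tangential test fields above annihilate it. Here I would integrate \eqref{eqn:Euler-Lagrange} against $\xi = \zeta\,\tau_p$ itself (a legitimate variation only up to the $S^{n-1}$-constraint, so more precisely one uses the structure $\frac{d}{dt}(|\tau_p'|^{p-2}\tau_p')\cdot\tau_p = -|\tau_p'|^p$, which is exactly what makes the left-hand side of \eqref{eqn:Euler-Lagrange} orthogonal to $\tau_p$ after the cancellation with $|\tau_p'|^p\tau_p$); alternatively, and more robustly, one differentiates $|\Lambda_p - \mu\tau_0 - ((\Lambda_p - \mu\tau_0)\cdot\tau_p)\tau_p|$ or tracks a conserved-type quantity. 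The cleanest route is probably to note that \eqref{eqn:Euler-Lagrange} is the Euler-Lagrange equation of a constrained minimisation and that $\Lambda_p$ is the multiplier for the \emph{linear} constraint \eqref{eqn:boundary_condition2}; one can then estimate $|\Lambda_p|$ by a duality/open-mapping argument — the constraint map $\tau \mapsto \int_0^L \beta\tau\,dt$ has a range that, near $\tau_p$, is not degenerate unless $\tau_p$ is (nearly) constant, and the norm of $\Lambda_p$ is controlled by $k_p^{1-p}$ times the energy, with the degeneracy again producing the near-constant alternative. The power $p^{-6}$ in the statement is generous, which signals that the author's bound is crude; I expect the genuine growth to be at most polynomial, and the $p^{6}$ is simply a safe exponent absorbing the various powers of $p$ that appear when differentiating $|\tau_p'|^{p-2}\tau_p'$ and estimating $\|\tau_p'\|_{L^\infty}$ by interpolation between $\|\tau_p'\|_{L^p}$ bounds (each such interpolation or each integration by parts on the $p$-Laplacian-type term typically costs a factor of $p$ or $p^2$).

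The main obstacle I anticipate is the division by $k_p^{p-1}$ and, relatedly, controlling the factor $|\tau_p'|^{p-2}\tau_p'$ in $L^\infty$ or in a dual space: the estimate is only useful if $k_p$ stays bounded below, and near the degenerate regime $k_p \to 0$ one must carefully show this is precisely the regime where $\tau_p \to \text{const}$. Quantifying "$\tau_p$ is uniformly close to a constant" from the weak/averaged smallness of $\proj_{\tau_p}^\perp(\Lambda_p)$ — i.e.\ passing from integral control to uniform control using equicontinuity — is the technical heart, and it is where the two alternatives of the lemma genuinely split; everything else is bookkeeping of powers of $p$, which is why the final exponent can be taken so wastefully large.
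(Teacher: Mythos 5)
Your first two paragraphs are essentially the paper's argument: one localises a test function near a point where $\tau_p$ is transverse to $\Lambda_p$, exploits that the $\Lambda_p$-term on the right of \eqref{eqn:Euler-Lagrange} then scales like a positive power of $|\Lambda_p|$ while everything else is $O(k_p^{p-1})$ times fixed data, and the alternative ``no such transversality point exists'' is exactly the constant-limit branch of the dichotomy. (The paper tests with $\eta\Lambda_p$ for a scalar cutoff $\eta$, which makes the key term $\int\eta\beta\,|\Lambda_p-(\Lambda_p\cdot\tau_p)\tau_p|^2\,dt$ nonnegative and \emph{quadratic} in $|\Lambda_p|$; your $\zeta\,\proj^\perp_{\tau_p}(e)$ with $e=\Lambda_p/|\Lambda_p|$ is the same device.) However, two points in your write-up are genuine gaps. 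First, the step you defer as ``the technical heart'' --- upgrading the averaged bound to control of $|\Lambda_p|$ --- is where the entire content and the exponent $6$ live, and you misattribute that exponent. A \emph{fixed} bump $\zeta$ gives nothing pointwise because $\Lambda_p\cdot\eta_p$ can cancel over its support; you must take $\zeta=\zeta_p$ supported on an interval of length $\sim p^{-2}$ so that the uniform $C^{0,1/2}$ modulus of $\tau_p$ (from the $L^2$ bound on $\tau_p'$) perturbs the transversality lower bound $|\sin\omega_p(t_p)|\ge \sin(1/p)$ only by a controlled fraction. The budget is then: $p^{-2}$ from $\sin^2(1/p)$, $p^{-2}$ from the interval length, $p^{2}$ from $\|\zeta_p'\|_\infty$, giving $|\Lambda_p|\lesssim p^6$. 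None of this comes from ``interpolation'' or from integrating the $p$-Laplacian term by parts. Also, ``nearly aligned with $\pm\Lambda_p/|\Lambda_p|$ everywhere implies nearly constant'' needs the connectedness of $[0,L]$ to exclude jumping between the two antipodes, and compactness of $S^{n-1}$ to extract a single limiting constant, since the candidate constant depends on $p$.

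Second, your third paragraph is a wrong turn. There is no separate ``parallel component'' $\Lambda_p\cdot\tau_p$ to estimate: $\Lambda_p$ enters \eqref{eqn:Euler-Lagrange} only through $\Lambda_p-(\Lambda_p\cdot\tau_p)\tau_p$, and at the transversality point $t_p$ one has $|\Lambda_p-(\Lambda_p\cdot\tau_p(t_p))\tau_p(t_p)|\ge |\Lambda_p|/(2p)$, so bounding the tangential projection there already bounds the full norm $|\Lambda_p|$ up to a factor $2p$. The two routes you sketch for the parallel part would not work anyway: testing with $\zeta\tau_p$ yields an identity in which $\Lambda_p$ drops out entirely (precisely because the right-hand side is orthogonal to $\tau_p$), and a duality/open-mapping argument on the constraint map $\tau\mapsto\int_0^L\beta\tau\,dt$ presupposes that $\tau_p$ is a constrained minimiser, whereas the lemma assumes only that the equation holds. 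Dropping that paragraph and carrying out the quantitative localisation above turns your sketch into the paper's proof.
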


\begin{proof}
Suppose that no subsequence converges uniformly to a constant vector.
Then it follows that for every sufficiently large $p$, either $\Lambda_p = 0$
or the angle $\omega_p$ between $\tau_p$ and $\Lambda_p$ satisfies
\[
\sup_{t \in [0, L]} \omega_p(t) \ge \frac{1}{p} \quad \text{and} \quad \sup_{t \in [0, L]} (\pi - \omega_p(t)) \ge \frac{1}{p}.
\]
Note that
\[
|\sin \omega_p| = \frac{|\Lambda_p - (\Lambda_p \cdot \tau_p) \tau_p|}{|\Lambda_p|}
\]
if $\Lambda_p \neq 0$.
Hence for every sufficiently large $p$, there exists $t_p \in [0, L]$ such that
\[
\left|\Lambda_p - (\Lambda_p \cdot \tau_p(t_p)) \tau_p(t_p)\right| \ge \frac{|\Lambda_p|}{2p}.
\]

Because we have a uniform bound for $\|\tau_p'\|_{L^2(0, L)}$, the Sobolev
embedding theorem gives a uniform bound for $\|\tau_p\|_{C^{0, 1/2}([0, T])}$ as well.
Hence there exists
a number $\delta > 0$ such that the inequality
\[
\left|\Lambda_p - (\Lambda_p \cdot \tau_p) \tau_p\right| \ge \frac{|\Lambda_p|}{3p}
\]
holds in $[t_p - \delta/p^2, t_p + \delta/p^2] \cap [0, L]$ for all sufficiently large values of $p$.
Choose $\eta \in C_0^\infty((t_p - \delta/p^2, t_p + \delta/p^2) \cap (0, L))$ such that $0 \le \eta \le 1$ and
\[
\int_0^L \eta \, ds \ge \frac{\delta}{2p^2},
\]
but $|\eta'| \le 5p^2/\delta$. Test \eqref{eqn:Euler-Lagrange} with $\eta \Lambda_p$.
This yields
\begin{multline*}
\int_0^L \eta |\tau_p'|^p \tau_p \cdot \Lambda_p \, dt - \int_0^L \eta' |\tau_p'|^{p - 2} \tau_p' \cdot \Lambda_p \, dt \\
= k_p^{p - 1} \int_0^L \eta \beta \left|\Lambda_p - (\Lambda_p \cdot \tau_p) \tau_p\right|^2 \, dt - \mu k_p^{p - 1} \int_0^L \eta \beta \left(\tau_0 - (\tau_0 \cdot \tau_p) \tau_p\right) \cdot \Lambda_p \, dt.
\end{multline*}
By the choice of $\eta$, we know that
\[
\int_0^L \eta \beta \left|\Lambda_p - (\Lambda_p \cdot \tau_p) \tau_p\right|^2 \, dt \ge \frac{\delta |\Lambda_p|^2}{18p^4 \|1/\alpha\|_{L^\infty(0, \ell)}}.
\]
Moreover, we have the estimates
\begin{align*}
\int_0^L \eta |\tau_p'|^p \tau_p \cdot \Lambda_p \, dt & \le Lk_p^p |\Lambda_p|, \\
- \int_0^L \eta' |\tau_p'|^{p - 2} \tau_p' \cdot \Lambda_p \, dt & \le \frac{5p^2}{\delta} L k_p^{p - 1} |\Lambda_p|, \\
\int_0^L \eta \beta \left(\tau_0 - (\tau_0 \cdot \tau_p) \tau_p\right) \cdot \Lambda_p \, dt & \le L \|\alpha\|_{L^\infty(0, \ell)} |\Lambda_p|.
\end{align*}
Hence
\[
|\Lambda_p| \le \frac{18Lp^4}{\delta} \|1/\alpha\|_{L^\infty(0, \ell)} \left(\frac{5p^2}{\delta} + \mu \|\alpha\|_{L^\infty(0, \ell)} + k_p\right),
\]
and the desired inequality follows.
\end{proof}

\section{Preliminary properties of $\infty$-elasticas}

The purpose of this section is to extract some information for pseudo-minimisers of $K_\infty$,
and therefore for $\infty$-elasticas, from the Euler-Lagrange equation \eqref{eqn:Euler-Lagrange}
by studying the limit $p \to \infty$. The resulting statements are less strong than the main
results in the introduction, but they will serve as a first step.

\begin{proposition} \label{prop:pseudo-minimiser=>DE}
Suppose that $\tau \in W^{1, \infty}((0, L); S^{n - 1})$ is a pseudo-minimiser of $K_\infty$.
Let $k = K_\infty(\tau)$.
Then there exist $u \in W^{1, \infty}((0, L); \R^n) \setminus \{0\}$ and $\lambda \in \R^n$ such that
the equations
\begin{align}
u' + (u \cdot \tau') \tau & = \beta(\lambda - (\lambda \cdot \tau) \tau), \label{eqn:system1}\\
|u|\tau' & = k u, \label{eqn:system2}
\end{align}
hold almost everywhere in $(0, L)$.
\end{proposition}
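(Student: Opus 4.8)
The strategy is to run the $L^p$-approximation from Section~\ref{sect:reparametrisation} and pass to the limit $p\to\infty$. A constant $\tau$ is a minimiser of $K_\infty$, so then $k=0$ and the claim holds with $\lambda=0$ and any nonzero constant $u$; hence I may assume $\tau$ is non-constant. I fix $\mu\ge\mu_0$ as in Proposition~\ref{prop:p-approximation}, and for each $p\in[2,\infty)$ a minimiser $\tau_p$ of $J_p^\mu(\blank;\tau)$ subject to \eqref{eqn:boundary_condition1} and \eqref{eqn:boundary_condition2}, with $k_p=K_p(\tau_p)$. By Proposition~\ref{prop:p-approximation}, $k_p\to k$ and $\tau_p\rightharpoonup\tau$ in $W^{1,q}((0,L);\R^n)$ for every $q<\infty$; in particular $\tau_p\to\tau$ uniformly on $[0,L]$, so no subsequence of $(\tau_p)$ converges uniformly to a constant. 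The Euler--Lagrange equation \eqref{eqn:Euler-Lagrange} holds with some $\Lambda_p\in\R^n$.

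Put $u_p=|\tau_p'|^{p-2}\tau_p'/k_p^{p-1}$, a nonnegative multiple of $\tau_p'$. Then $u_p\cdot\tau_p'=|\tau_p'|^p/k_p^{p-1}$ and $|u_p|\tau_p'=|\tau_p'|u_p$, while $|u_p|=(|\tau_p'|/k_p)^{p-1}$ (so $|\tau_p'|=k_p|u_p|^{1/(p-1)}$) and $\|u_p\|_{L^{p/(p-1)}(0,L)}=L^{(p-1)/p}\to1$. Dividing \eqref{eqn:Euler-Lagrange} by $k_p^{p-1}$ and using $\tau_0=\tau$ rewrites it as
\[
u_p'+(u_p\cdot\tau_p')\tau_p=\beta\big((\Lambda_p-\mu\tau)-((\Lambda_p-\mu\tau)\cdot\tau_p)\tau_p\big),
\]
which is already structurally \eqref{eqn:system1}; and since $|\tau|\equiv1$ the penalisation term $-\mu\beta(\tau-(\tau\cdot\tau_p)\tau_p)$ converges uniformly to $-\mu\beta(\tau-\tau)=0$ as $\tau_p\to\tau$. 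So everything comes down to passing to the limit in this equation, for which the essential ingredient is a uniform bound $\sup_p|\Lambda_p|<\infty$.

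Establishing $\sup_p|\Lambda_p|<\infty$ is the main obstacle. I would argue by contradiction. If $|\Lambda_{p_i}|\to\infty$ along a subsequence, set $\hat u_{p_i}=u_{p_i}/|\Lambda_{p_i}|$ and $\hat\Lambda_{p_i}=\Lambda_{p_i}/|\Lambda_{p_i}|$. Then $\|\hat u_{p_i}\|_{L^1}\le L/|\Lambda_{p_i}|\to0$ and, since $u_{p_i}\cdot\tau_{p_i}'\ge0$, also $\int_0^L|\hat u_{p_i}\cdot\tau_{p_i}'|\,dt=Lk_{p_i}/|\Lambda_{p_i}|\to0$. Dividing the rewritten equation by $|\Lambda_{p_i}|$ and letting $i\to\infty$ (after extracting $\hat\Lambda_{p_i}\to\hat\Lambda\in S^{n-1}$ and using $\tau_{p_i}\to\tau$ uniformly) gives $0=\beta(\hat\Lambda-(\hat\Lambda\cdot\tau)\tau)$ in the sense of distributions. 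As $\beta>0$, this forces $\tau\equiv\pm\hat\Lambda$, contradicting non-constancy. (The growth estimate $|\Lambda_p|=O(p^6)$ of Lemma~\ref{lem:growth}, which in the non-constant case is the relevant alternative, is the natural a priori input here.)

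With $\sup_p|\Lambda_p|=:\Lambda_\ast<\infty$, the rest is bookkeeping. Writing $w_p=|\tau_p'|^{p-2}\tau_p'$ and using $\|w_p\|_{L^\infty}\le L^{-1}\|w_p\|_{L^1}+\|w_p'\|_{L^1}$ together with $\|w_p\|_{L^1}\le Lk_p^{p-1}$ and, from \eqref{eqn:Euler-Lagrange}, $\|w_p'\|_{L^1}\le Lk_p^p+C(1+\Lambda_\ast)k_p^{p-1}$, one obtains $(\|\tau_p'\|_{L^\infty}/k_p)^{p-1}\le C'$ for some constant $C'$; hence $\|u_p\|_{L^\infty}$ and $\|(u_p\cdot\tau_p')\tau_p\|_{L^\infty}=\||\tau_p'|^p/k_p^{p-1}\|_{L^\infty}$ are bounded, so the rewritten equation bounds $u_p$ in $W^{1,\infty}((0,L);\R^n)$. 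Passing to a subsequence, $\Lambda_p\to\lambda\in\R^n$, $u_p\to u$ uniformly and $u_p'$ converges to $u'$ weakly-$*$ in $L^\infty$. Since $p/(p-1)\to1$ and $u_p\to u$ uniformly with uniformly bounded norm, $u_p\cdot\tau_p'=k_p|u_p|^{p/(p-1)}\to k|u|$ and $|u_p|\tau_p'=k_p|u_p|^{1/(p-1)}u_p\to ku$, both uniformly; as $|u_p|\tau_p'$ also converges weakly-$*$ to $|u|\tau'$, the limits of the rewritten equation and of $|u_p|\tau_p'=|\tau_p'|u_p$ are $u'+k|u|\tau=\beta(\lambda-(\lambda\cdot\tau)\tau)$ and $|u|\tau'=ku$ almost everywhere in $(0,L)$. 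The latter is \eqref{eqn:system2}; contracting it with $u$ shows $u\cdot\tau'=k|u|$, so the former is \eqref{eqn:system1}. Finally $\|u\|_{L^1}=\lim_p\|u_p\|_{L^{p/(p-1)}}=1$, so $u\neq0$ (and $u\in W^{1,\infty}$), completing the proof. Beyond the multiplier bound, the points requiring care are the $L^\infty$-bootstrap for $\tau_p'$ and the uniform convergence of the nonlinear quantities $|u_p|^{p/(p-1)}$ and $|u_p|^{1/(p-1)}u_p$, which both rely on $p/(p-1)\to1$.
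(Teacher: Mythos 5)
Your proof is correct, and while it follows the same overall architecture as the paper's argument ($L^p$-approximation with the penalised functionals $J_p^\mu(\blank;\tau)$, the renormalised momentum $|\tau_p'|^{p-2}\tau_p'/k_p^{p-1}$, and passage to the limit $p\to\infty$), it treats the Lagrange multipliers in a genuinely different and in fact stronger way. The paper never establishes $\sup_p|\Lambda_p|<\infty$: it normalises $u_p$, $\lambda_p$ and $\mu$ by $1+|\Lambda_p|$, allows $|\Lambda_p|\to\infty$ (deducing $u\not\equiv 0$ in that case from the limit equation itself), and invokes the polynomial growth bound of Lemma~\ref{lem:growth} only to ensure $(1+|\Lambda_p|)^{1/(p-1)}\to 1$ when recovering \eqref{eqn:system2} locally on the set $\{u\neq 0\}$. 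Your blow-up argument --- dividing by $|\Lambda_{p_i}|$, using $\|u_{p_i}\|_{L^1}\le L$ together with the sign condition $u_{p_i}\cdot\tau_{p_i}'\ge 0$ to annihilate the left-hand side distributionally, and concluding $\hat\Lambda=(\hat\Lambda\cdot\tau)\tau$ and hence that $\tau$ is constant --- is sound and yields uniform boundedness of $\Lambda_p$ outright. This buys you a cleaner endgame: Lemma~\ref{lem:growth} is bypassed entirely, and \eqref{eqn:system2} is obtained globally from the uniform convergence of $k_p|u_p|^{1/(p-1)}u_p$ and $k_p|u_p|^{p/(p-1)}$ (both correctly reduced to $p/(p-1)\to 1$ plus the uniform $L^\infty$-bound for $u_p$ from your $W^{1,1}\hookrightarrow L^\infty$ bootstrap), rather than by the paper's local analysis on $\{u\neq 0\}$. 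Two minor points: $\|u_p\|_{L^{p/(p-1)}(0,L)}=L^{(p-1)/p}$ tends to $L$, not to $1$, so $\|u\|_{L^1}=L$ --- the conclusion $u\neq 0$ is unaffected; and since you divide by $k_p^{p-1}$, you should note explicitly that non-constancy of $\tau$ forces $k>0$ and hence $k_p>0$ for all large $p$ by Proposition~\ref{prop:p-approximation}.
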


\begin{proof}
The statements are obvious (for $u = 1$ and $\lambda = 0$) if $\tau$ is constant.
We therefore assume that this is not the case.

Fix $\mu > 0$ and consider the functionals
$J_p^\mu(\blank; \tau)$. Minimisers $\tau_p$ of $J_p^\mu(\blank; \tau)$ under the boundary conditions
\eqref{eqn:boundary_condition1} and the constraint \eqref{eqn:boundary_condition2} can be constructed
with the direct method. Let $k_p = K_p(\tau_p)$. We assume that $\mu > 0$ is chosen so large that statement \ref{item:convergence}
in Proposition \ref{prop:p-approximation} applies.

We consider the Euler-Lagrange equation \eqref{eqn:Euler-Lagrange}. The underlying
idea for the next step is to regard it as an equation in $|\tau_p'|^{p - 2} \tau_p'$.
But at the same time, we renormalise. Thus we introduce the functions
\[
u_p = \frac{k_p^{1 - p} |\tau_p'|^{p - 2} \tau_p'}{1 + |\Lambda_p|}.
\]
We also define
\[
\lambda_p = \frac{\Lambda_p}{1 + |\Lambda_p|} \quad \text{and} \quad m_p = \frac{\mu}{1 + |\Lambda_p|}.
\]
Then we can write \eqref{eqn:Euler-Lagrange} (for $\tau_0 = \tau$) in the form
\begin{equation} \label{eqn:Euler-Lagrange2}
u_p' + (u_p \cdot \tau_p') \tau_p = \beta(\lambda_p - (\lambda_p \cdot \tau_p) \tau_p - m_p \tau + m_p (\tau \cdot \tau_p)\tau_p).
\end{equation}

Writing $p' = p/(p - 1)$, we note that
\begin{equation} \label{eqn:L^p'-estimate}
\|u_p\|_{L^{p'}(0, L)} = \frac{k_p^{p - 1}}{1 + |\Lambda_p|} \left(\int_0^L |\tau_p'|^p \, dt\right)^{1/p'} = \frac{L^{1/p'}}{1 + |\Lambda_p|}.
\end{equation}
The right-hand side remains bounded as $p \to \infty$. Moreover, we know that
\[
\|\tau_p'\|_{L^p(0, L)} = L^{1/p} k_p \to k
\]
as $p \to \infty$ by Proposition \ref{prop:p-approximation}.
As $|\tau_p| \equiv 1$, $|\lambda_p| \le 1$, and $0 < m_p \le \mu$,
equation \eqref{eqn:Euler-Lagrange2} immediately gives a uniform bound for $\|u_p\|_{W^{1, 1}(0, L)}$.
Thus we have a uniform bound in $L^\infty((0, L); \R^n)$ as well, and using the equation again, we conclude
that
\[
\limsup_{p \to \infty} \|u_p'\|_{L^q(0, L)} < \infty
\]
for any $q < \infty$.
Thus we may choose a sequence $p_i \to \infty$ such that $u_{p_i} \rightharpoonup u$, for some
$u \in \bigcap_{q < \infty} W^{1, q}((0, L); \R^n)$,
weakly in $W^{1, q}((0, L); \R^n)$ for any $q < \infty$ as $i \to \infty$. In particular
$u_{p_i} \to u$ uniformly as $i \to \infty$. Since $|\lambda_p| \le 1$ and $0 < m_p \le \mu$, we may assume that at the same time, we have the convergence
$\lambda_{p_i} \to \lambda$ for some $\lambda \in \R^n$ and $m_{p_i} \to m$
for some $m \in [0, \mu]$.
By Proposition \ref{prop:p-approximation}, we know that $\tau_p \to \tau$ weakly in $W^{1, q}((0, L); \R^n)$
for any $q < \infty$. Thus restricting \eqref{eqn:Euler-Lagrange2} to $p_i$ and letting
$i \to \infty$, we derive
equation \eqref{eqn:system1} almost everywhere. Now \eqref{eqn:system1} implies that $u \in W^{1, \infty}((0, L); \R^n)$.

If $|\Lambda_{p_i}| \to \infty$ as $i \to \infty$, then $\lambda \in S^{n - 1}$ and \eqref{eqn:system1}
cannot be satisfied for $u \equiv 0$ (as we have assumed that $\tau$ is not constant).
If $|\Lambda_{p_i}| \not\to \infty$, then \eqref{eqn:L^p'-estimate} implies that $\|u\|_{L^1(0, L)} \neq 0$.
In either case, we conclude that $u \in W^{1, \infty}((0, L); \R^n) \setminus \{0\}$.

As $u$ is continuous, the set $\Omega = \set{t \in [0, L]}{u(t) \neq 0}$ is open
relative to $[0, L]$. For any
$t \in \Omega$, there exist $\delta > 0$ and $\epsilon > 0$ such that $\delta \le |u_{p_i}| \le 1/\delta$
in $(t - \epsilon, t + \epsilon) \cap [0, L]$ for any $i$ large enough. Now note that
\[
\tau_p' = k_p (1 + |\Lambda_p|)^{1/(p - 1)} |u_p|^{1/(p - 1)} \frac{u_p}{|u_p|}
\]
wherever $u_p \neq 0$ by the definition of $u_p$. As we have assumed that $\tau$ is not constant, we know that
\[
(1 + |\Lambda_p|)^{1/(p - 1)} \to 1
\]
as $p \to \infty$ by Lemma \ref{lem:growth}.
We further know that
\[
|u_{p_i}|^{1/(p_i - 1)} \to 1 \quad \text{and} \quad \frac{u_{p_i}}{|u_{p_i}|} \to \frac{u}{|u|}
\]
uniformly in $(t - \epsilon, t + \epsilon) \cap [0, L]$ as $i \to \infty$. Therefore, by the above identity,
\[
\tau_{p_i}' \to \tau' = \frac{k u}{|u|}
\]
locally uniformly in $\Omega$. We therefore obtain equation \eqref{eqn:system2}.
\end{proof}

For planar curves, we can say more.

\begin{lemma} \label{lem:2D}
Let $\tau \in W^{1, \infty}((0, L); S^{n - 1})$ and $\lambda \in \R^n \setminus \{0\}$. Suppose that
$\tau([0, L])$ is contained in a two-dimensional linear subspace $X \subseteq \R^n$.
Let
\[
c(t) = a_1 + \int_0^t \beta(\theta) \tau(\theta) \, d\theta
\]
for $t \in [0, L]$. Suppose that $k = K_\infty(\tau) \neq 0$ and consider a set $\Omega \subseteq [0, L]$.
Then the following statements are equivalent.
\begin{enumerate}[(i)]
\item \label{item:system2D}
There exists $u \in W^{1, \infty}((0, L); \R^n)$ such that \eqref{eqn:system1} and \eqref{eqn:system2}
hold true almost everywhere and $\Omega = \set{t \in [0, L]}{u(t) \neq 0}$.

\item \label{item:structure2D}
The vector $\lambda$ belongs to $X$ and there exists a line $\mathcal{L} \subseteq X + a_1$ parallel to $\lambda$ such
that $\Omega = \set{t \in [0, L]}{c(t) \not\in \mathcal{L}}$. Moreover,
$\tau'$ is continuous with $|\tau'| \equiv k$ in $\Omega$. For any $t_0 \in [0, L] \setminus \Omega$,
if there exists $\delta > 0$
with $(t_0 - \delta, t_0) \subseteq \Omega$, then there exists $\delta' \in (0, \delta]$ such that
$\lambda \cdot \tau'(t) < 0$ in $(t_0 - \delta', t_0)$;
and if there exists $\delta > 0$
with $(t_0, t_0 + \delta) \subseteq \Omega$, then there exists $\delta' \in (0, \delta]$ such that
$\lambda \cdot \tau'(t) > 0$ in $(t_0, t_0 + \delta')$.
\end{enumerate}
\end{lemma}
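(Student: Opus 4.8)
**The plan is to analyse the planar system (\ref{eqn:system1}), (\ref{eqn:system2}) directly, using the two-dimensional structure to trade vector equations for scalar ones.** Since $\tau([0,L]) \subseteq X$ with $\dim X = 2$, I would pick an orthonormal basis of $X$ and write $\tau = (\cos\theta, \sin\theta)$ for some $\theta \in W^{1,\infty}(0,L)$, so that $\tau' = \theta' (-\sin\theta, \cos\theta) = \theta' J\tau$ where $J$ is rotation by $\pi/2$ in $X$. The first thing to establish is that, under either hypothesis, $\lambda$ must lie in $X$: in (\ref{item:system2D}), equation (\ref{eqn:system1}) shows $u' + (u\cdot\tau')\tau - \beta(\lambda - (\lambda\cdot\tau)\tau)$ vanishes, and since $u$ (hence $u'$ where it exists) stays in $X$ by (\ref{eqn:system2}) wherever $u \neq 0$ — and on the complement $u \equiv 0$ forces the right side into $X$ too — one gets $\lambda \in X$; in (\ref{item:structure2D}) it is assumed outright. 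Having reduced to $X \cong \R^2$, I would decompose $u = u^\top \tau + u^\perp J\tau$ and rewrite the system: (\ref{eqn:system2}) becomes $|u|\,\theta' J\tau = k(u^\top \tau + u^\perp J\tau)$, which forces $u^\top = 0$ wherever $u\neq 0$ (so $u = u^\perp J\tau$, i.e. $u$ is normal to the curve) and $|u^\perp| \theta' = k u^\perp$, hence $\theta' = k\,\mathrm{sgn}(u^\perp)$ on $\Omega$. This already gives $|\tau'| \equiv k$ on $\Omega$ and, combined with $\theta' = \pm k$ being locally constant (since $u^\perp$ is continuous and nonzero on $\Omega$, its sign is locally constant), that $\tau'$ is continuous there.

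**Next I would extract the geometric meaning of $\Omega$ and the sign conditions at $\partial\Omega$.** Plugging $u = u^\perp J\tau$ into (\ref{eqn:system1}) and taking components along $\tau$ and $J\tau$: the $J\tau$-component gives $(u^\perp)' = \beta\,\lambda\cdot J\tau = -\beta\, J\lambda\cdot\tau$ (using $\lambda\cdot J\tau = -J\lambda\cdot\tau$), and the $\tau$-component, using $(J\tau)' = -\theta'\tau$, gives $u^\perp(\theta' - \text{something})\dots$ — more precisely it reproduces the relation $|u|\theta' = ku^\perp$ already found, so it carries no new information on $\Omega$. The key identity is therefore $(u^\perp)' = -\beta\,(J\lambda)\cdot\tau$. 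Now $(J\lambda)\cdot\tau = \frac{d}{dt}\big[(J\lambda)\cdot(c - a_1)\big]/\beta \cdot \beta$... rather, since $c' = \beta\tau$, we have $\frac{d}{dt}\big[(J\lambda)\cdot(c-a_1)\big] = \beta\,(J\lambda)\cdot\tau = -(u^\perp)'$. Hence $u^\perp + (J\lambda)\cdot(c - a_1)$ is constant on $[0,L]$, say equal to $J\lambda\cdot(q - a_1)$ for a suitable point $q$, which means $u^\perp(t) = (J\lambda)\cdot(q - c(t))$. Thus $u(t) \neq 0 \iff (J\lambda)\cdot(q - c(t)) \neq 0 \iff c(t)$ is not on the line $\mathcal{L} := \{x : (J\lambda)\cdot(x - q) = 0\}$, which is precisely the line through $q$ orthogonal to $J\lambda$, i.e.\ parallel to $\lambda$. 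This identifies $\Omega = \{t : c(t)\notin\mathcal{L}\}$ and exhibits $\mathcal{L} \subseteq X + a_1$ parallel to $\lambda$, giving (\ref{item:structure2D}) from (\ref{item:system2D}). For the sign conditions: at $t_0 \notin\Omega$ with $(t_0-\delta,t_0)\subseteq\Omega$, the sign of $\lambda\cdot\tau'(t) = \theta'(t)\,\lambda\cdot J\tau(t) = k\,\mathrm{sgn}(u^\perp)\cdot(-(u^\perp)'/\beta)$... so $\lambda\cdot\tau' = -\frac{k}{\beta}\,\mathrm{sgn}(u^\perp)(u^\perp)'= -\frac{k}{2\beta}\,\mathrm{sgn}(u^\perp)\frac{d}{dt}(u^\perp)^2$, and since $(u^\perp)^2 \geq 0$ vanishes at $t_0$ and is positive just before, its derivative is $\leq 0$ near $t_0^-$, giving $\lambda\cdot\tau' \leq 0$; one upgrades $\leq$ to $<$ on a possibly smaller interval by noting $(u^\perp)^2$ cannot be eventually constant near $t_0$ (it hits $0$ at $t_0$ but is positive just before), actually here I must be a little careful and argue that on $(t_0-\delta,t_0)$ the sign of $u^\perp$ is fixed and $|u^\perp|$ decreases to $0$, so its derivative is strictly negative on a set accumulating at $t_0$ and, being $-\beta\,J\lambda\cdot\tau$ with $\tau$ continuous, is strictly negative on a whole interval $(t_0-\delta',t_0)$ unless $J\lambda\cdot\tau(t_0)=0$, a boundary case I would handle by a short separate argument. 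The symmetric statement at $t_0^+$ is identical with signs flipped.

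**For the converse, (\ref{item:structure2D}) $\Rightarrow$ (\ref{item:system2D}), I would simply run the construction backwards:** define $u^\perp(t) := (J\lambda)\cdot(q - c(t))$ where $q$ is any point on $\mathcal{L}$, set $u := u^\perp\, J\tau \in W^{1,\infty}((0,L);\R^n)$, and verify (\ref{eqn:system1}) and (\ref{eqn:system2}) by the component computations above — (\ref{eqn:system2}) needs $\theta' = k\,\mathrm{sgn}(u^\perp)$ on $\Omega$, which follows from $|\tau'|\equiv k$ on $\Omega$ together with the sign conditions (these pin down $\mathrm{sgn}\,\theta'$ to agree with $\mathrm{sgn}\,u^\perp$ componentwise on each maximal subinterval of $\Omega$, via the $\lambda\cdot\tau'$ sign at the endpoints), and (\ref{eqn:system1}) is then the identity $(u^\perp)' = -\beta\, J\lambda\cdot\tau$ which holds by construction. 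That $\Omega = \{u\neq 0\}$ is immediate from $u^\perp = (J\lambda)\cdot(q-c)$ vanishing exactly on $\mathcal{L}$.

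**The main obstacle I anticipate is the careful handling of the boundary behaviour at points of $\partial\Omega$** — reconciling the continuity and constancy of $\theta' = \pm k$ on each component of $\Omega$ with the prescribed strict inequalities for $\lambda\cdot\tau'$, and in particular the degenerate case where the curve is tangent to $\mathcal{L}$ at a touching point (so that $J\lambda\cdot\tau$ itself vanishes there, and one cannot conclude the strict sign of $\lambda\cdot\tau'$ merely from continuity). Resolving this requires using that $|u^\perp| = |(J\lambda)\cdot(q-c)|$ is strictly positive and, say, decreasing towards such a touching point, which forces $(u^\perp)'$ to have a strict sign on a genuine interval even when its value at the endpoint is $0$; this is where the one-sided structure of the hypothesis (the existence of $\delta$ with $(t_0-\delta,t_0)\subseteq\Omega$) is essential and must be invoked with care. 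The rest is bookkeeping: unpacking the $2\times 2$ structure, checking regularity ($u\in W^{1,\infty}$ follows from $c\in W^{1,\infty}$ and $\tau\in W^{1,\infty}$), and keeping track of which maximal subinterval of $\Omega$ one is on.
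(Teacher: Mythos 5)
Your plan follows essentially the same route as the paper: identify $\lambda\in X$, decompose $u$ along the normal $J\tau$ (the paper writes $u=f\tau^\perp$ with $f=|u|\omega'/k$), read off $\theta'=k\,\mathrm{sgn}(u^\perp)$ from \eqref{eqn:system2}, integrate the normal component of \eqref{eqn:system1} to get $u^\perp=(J\lambda)\cdot(q-c)$ and hence the line $\mathcal{L}$, and run the construction backwards for the converse. All of that is correct and matches the paper's proof step for step.

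The one genuine gap is exactly the point you flag and then defer: the strict sign of $\lambda\cdot\tau'$ on a full one-sided interval at a boundary point $t_0$ of $\Omega$, in particular when the curve meets $\mathcal{L}$ tangentially, i.e.\ when $\lambda\cdot J\tau(t_0)=0$. Your proposed mechanism --- that $|u^\perp|$ is ``decreasing towards the touching point'' --- is not established and is not actually the right lever: from $|u^\perp|>0$ on $(t_0-\delta,t_0)$ and $|u^\perp|(t_0)=0$ alone you only get that $\frac{d}{dt}|u^\perp|$ is negative on a set accumulating at $t_0$, not on an interval. The missing ingredient, which you already have in hand but do not invoke here, is that $\theta'=\sigma k$ is \emph{constant} on each component of $\Omega$, so on $(t_0-\delta,t_0)$ the curve is an explicit circular arc: $\tau(t)=\cos(k(t-t_0))\tau(t_0)+\sigma\sin(k(t-t_0))\tau^\perp(t_0)$, and hence both $(u^\perp)'$ and $\lambda\cdot\tau'$ are explicit sinusoids in $t-t_0$ with coefficients $\lambda\cdot\tau(t_0)$ and $\lambda\cdot\tau^\perp(t_0)$, not both zero. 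From $u^\perp(t_0)=0$ and $\mathrm{sgn}(u^\perp)=\sigma$ on the left one deduces $\sigma\lambda\cdot\tau^\perp(t_0)\le 0$, and in the degenerate (tangential) case additionally $\lambda\cdot\tau(t_0)<0$; plugging these into the sinusoid for $\lambda\cdot\tau'$ gives the strict sign on a whole interval $(t_0-\delta',t_0)$. The same elementary computation also closes the corresponding step in your converse direction (pinning $\mathrm{sgn}\,\theta'$ to $\mathrm{sgn}\,u^\perp$ from the endpoint sign conditions). With that inserted, your argument is complete and coincides with the paper's.
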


\begin{proof}
We may choose coordinates such that $X = \R^2 \times \{0\}$ and then write
\[
\tau = (\cos \omega, \sin \omega, 0)
\]
in $[0, L]$ for some function $\omega \colon [0, L] \to \R$.
Now for $x \in \R^n$, write $x^\perp = (-x_2, x_1, x_3, \dotsc, x_n)$. In particular
$\tau^\perp = (-\sin \omega, \cos \omega, 0)$ and $\tau' = \omega' \tau^\perp$.

If \ref{item:system2D} is satisfied, then \eqref{eqn:system2} implies that $u(t) \in X$ for every
$t \in [0, L]$, and then \eqref{eqn:system1} implies that $\lambda \in X$. It is clear that $u/|u|$ is continuous in $\Omega$.
Thus equation \eqref{eqn:system2} further implies that $\omega'$ is continuous in $\Omega$ with $|\omega'| \equiv k$.
Defining $f = |u| \omega'/k$, we compute $u = f\tau^\perp$ and
\begin{equation} \label{eqn:system1_2D}
u' + (u \cdot \tau') \tau = f'\tau^\perp.
\end{equation}
Multiplying \eqref{eqn:system1} with $\tau^\perp$, we conclude that
\begin{equation} \label{eqn:f2D}
f' = \beta \lambda \cdot \tau^\perp
\end{equation}
in $\Omega$. Outside of $\Omega$, we know that $f$ vanishes, and it follows that for any $t_1, t_2 \in [0, L]$,
we have the inequality $|f(t_1) - f(t_2)| \le \|\beta\|_{L^\infty(0, L)} |\lambda| |t_1 - t_2|$.
So $f \in W^{1, \infty}(0, L)$ and \eqref{eqn:system1_2D}, \eqref{eqn:f2D} hold true almost everywhere in $[0, L]$.

Consider $c$ as defined above and note that $(c')^\perp = \beta \tau^\perp$.
Hence $f' = \lambda \cdot (c')^\perp$ in $[0, L]$. It follows that there exists some number $b \in \R$ such that
\[
f^{-1}(\{0\}) = \set{t \in [0, L]}{\lambda^\perp \cdot c(t) = b}.
\]
In other words, the line $\mathcal{L} = \set{x \in X + a_1}{\lambda^\perp \cdot x = b}$,
which is parallel to $\lambda$, has the property that
$\Omega = \set{t \in [0, L]}{c(t) \not\in \mathcal{L}}$.

Now suppose that $t_0 \in [0, L] \setminus \Omega$ such that there exists $\delta > 0$ with $(t_0 - \delta, t_0) \subseteq \Omega$.
Recall that $|\omega'| \equiv k$ in $(t_0 - \delta, t_0)$ while the sign of $\omega'$ is constant. So
$\omega' = \sigma k$ in $(t_0 - \delta, t_0)$ for some $\sigma \in \{-1, 1\}$. Hence
\begin{equation} \label{eqn:tau2D}
\tau(t) = \cos(k(t - t_0)) \tau(t_0) + \sigma \sin(k(t - t_0)) \tau^\perp(t_0)
\end{equation}
and
\[
\tau'(t) = -k\sin(k(t - t_0)) \tau(t_0) + \sigma k\cos(k(t - t_0)) \tau^\perp(t_0)
\]
in $(t_0 - \delta, t_0)$. Moreover, identity \eqref{eqn:f2D} implies that
\begin{equation} \label{eqn:f'2D}
f'(t) = -\sigma \beta(t) \sin(k(t - t_0)) \lambda \cdot \tau(t_0) + \beta(t) \cos(k(t - t_0)) \lambda \cdot \tau^\perp(t_0)
\end{equation}
in $(t_0 - \delta, t_0)$. As $f(t_0) = 0$ and as $f$ has the same sign as $\omega'$ in $(t_0 - \delta, t_0)$,
we immediately conclude that $\sigma \lambda \cdot \tau^\perp(t_0) \le 0$; and in the case of equality,
we further conclude that $\lambda \cdot \tau(t_0) < 0$. But then, as
\[
\lambda \cdot \tau'(t) = -k\sin(k(t - t_0)) \lambda \cdot \tau(t_0) + \sigma k\cos(k(t - t_0)) \lambda \cdot \tau^\perp(t_0),
\]
this implies that $\lambda \cdot \tau'(t) < 0$ in $(t_0 - \delta', t_0)$ for some $\delta' > 0$.
If there exists $\delta > 0$ such that $(t_0, t_0 + \delta) \subseteq \Omega$, then we
can draw similar conclusions with the same arguments.
Hence \ref{item:structure2D} is satisfied.

Conversely, suppose that \ref{item:structure2D} holds true. If $c([0, L]) \subseteq \mathcal{L}$, set $u = 0$.
Otherwise, set
\[
f(t) = \lambda \cdot c^\perp(t) + b,
\]
where $b \in \R$ is chosen such that $\Omega = f^{-1}(\{0\})$. Then $f \in W^{1, \infty}(0, L)$
and \eqref{eqn:f2D} is satisfied.
If $(t_0, t_1) \subseteq \Omega$ is any connected component of $\Omega$, then $\omega' = \sigma k$
in $(t_0, t_1)$ for some fixed $\sigma \in \{-1, 1\}$. Hence we can write $\tau$ in the form
\eqref{eqn:tau2D} and it follows that $f'$ satisfies \eqref{eqn:f'2D}
in $(t_0, t_1)$.
The condition on the sign of $\lambda \cdot \tau'$ near $t_0$ implies that
$\sigma \lambda \cdot \tau^\perp(t_0) \ge 0$; and in the case of equality,
it also implies that $\lambda \cdot \tau(t_0) < 0$. Therefore, the function $f$ has the same sign as
$\omega'$ in $(t_0, t_1)$. Similar conclusions hold if we have connected components of
$\Omega$ of the form $[0, t_1)$ or $(t_0, L]$. Hence
$f$ and $\omega'$ have the same sign everywhere in $\Omega$.

Now we set $u = f\tau^\perp$. Then \eqref{eqn:system2} is obvious and
\eqref{eqn:system1} can be verified by computing \eqref{eqn:system1_2D} again and observing that
\[
\beta(\lambda - (\lambda \cdot \tau) \tau) = \beta (\lambda \cdot \tau^\perp) \tau^\perp = (\lambda \cdot (c')^\perp) \tau^\perp = f' \tau^\perp.
\]
This concludes the proof.
\end{proof}

\section{Analysis of the differential equations}

In this section we study the system \eqref{eqn:system1}, \eqref{eqn:system2} and its relationship
to the variational problem in more detail. Furthermore, we show that it is equivalent to \eqref{eqn:ODE1}, \eqref{eqn:ODE2}
up to the reparametrisation introduced in Section \ref{sect:reparametrisation}.

\begin{proposition} \label{prop:DE=>pseudo-minimiser}
Suppose that $\tau \in W^{1, \infty}((0, L); S^{n - 1})$ satisfies
\eqref{eqn:boundary_condition1} and \eqref{eqn:boundary_condition2}.
If there exist $u \in W^{1, \infty}((0, L); \R^n) \setminus \{0\}$ and $\lambda \in \R^n$ such that
\eqref{eqn:system1} and \eqref{eqn:system2} hold almost everywhere in $(0, L)$, then
$\tau$ is a pseudo-minimiser of $K_\infty$. If in addition
$k|u| + \beta \lambda \cdot \tau \le 0$ in $[0, L]$, then $\tau$ is a minimiser of $K_\infty$ under the constraints \eqref{eqn:boundary_condition1} and \eqref{eqn:boundary_condition2}.
\end{proposition}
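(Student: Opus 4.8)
The plan is to use $u$ as a calibration for $K_\infty$. Fix an arbitrary competitor $\tilde\tau \in W^{1,\infty}((0,L);S^{n-1})$ satisfying \eqref{eqn:boundary_condition1} and \eqref{eqn:boundary_condition2}, put $v = \tilde\tau - \tau$, and note that $v(0) = v(L) = 0$. Recall that $k = K_\infty(\tau)$ (the constant in \eqref{eqn:system2}); in particular \eqref{eqn:system2} forces $|\tau'| = k$ a.e.\ on $\{u \neq 0\}$, and $\int_0^L |u|\,dt > 0$ because $u$ is continuous and not identically zero. The argument rests on evaluating $\int_0^L u\cdot v'\,dt$ in two ways and comparing.

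The first evaluation uses the differential equations. Since $u \cdot v \in W^{1,1}(0,L)$ and $v$ vanishes at both endpoints, integration by parts gives $\int_0^L u\cdot v'\,dt = -\int_0^L u'\cdot v\,dt$. Substituting \eqref{eqn:system1} for $u'$, the integrand on the right becomes $-\beta\bigl(\lambda - (\lambda\cdot\tau)\tau\bigr)\cdot v + (u\cdot\tau')\,\tau\cdot v$. Here I would use the spherical identity $\tau\cdot v = \tau\cdot\tilde\tau - 1 = -\tfrac12|v|^2$ (valid since $|\tau| = |\tilde\tau| = 1$) together with $u\cdot\tau' = k|u|$, which follows from \eqref{eqn:system2}; this rewrites the integrand as $-\beta\,\lambda\cdot v + \tfrac12\bigl(\beta\,\lambda\cdot\tau + k|u|\bigr)|v|^2$. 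The contribution linear in $v$ vanishes after integration, because $\int_0^L \beta\,\lambda\cdot v\,dt = \lambda\cdot\bigl(\int_0^L \beta\tilde\tau\,dt - \int_0^L \beta\tau\,dt\bigr) = 0$ by the constraint \eqref{eqn:boundary_condition2}. This yields the identity
\[
\int_0^L u\cdot v'\,dt = -\frac12\int_0^L\bigl(\beta\,\lambda\cdot\tau + k|u|\bigr)|v|^2\,dt.
\]

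The second evaluation uses only the definition of $K_\infty$ and the Cauchy--Schwarz inequality: $u\cdot\tau' = k|u|$ a.e., while $u\cdot\tilde\tau' \le |u|\,|\tilde\tau'| \le |u|\,K_\infty(\tilde\tau)$ a.e., so $\int_0^L u\cdot v'\,dt \le \bigl(K_\infty(\tilde\tau) - k\bigr)\int_0^L |u|\,dt$. Combining with the identity above and dividing by $\int_0^L |u|\,dt > 0$ gives
\[
K_\infty(\tau) = k \le K_\infty(\tilde\tau) + \frac{1}{2\int_0^L|u|\,dt}\int_0^L\bigl(\beta\,\lambda\cdot\tau + k|u|\bigr)|v|^2\,dt.
\]
Because $\beta$ is bounded while $1/\alpha$, hence $1/\beta$, is bounded, the function $\beta\,\lambda\cdot\tau + k|u|$ is bounded above by $C\beta$ for some constant $C$, so the last integral is at most $\tfrac{m}{L}\int_0^L\beta|v|^2\,dt$ with $m = CL/\int_0^L|u|\,dt$; this is precisely the defining inequality of a pseudo-minimiser. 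If moreover $k|u| + \beta\,\lambda\cdot\tau \le 0$ on $[0,L]$, then the correction term in the displayed inequality is nonpositive, so $K_\infty(\tau) \le K_\infty(\tilde\tau)$ for every admissible $\tilde\tau$, i.e.\ $\tau$ minimises $K_\infty$ subject to \eqref{eqn:boundary_condition1} and \eqref{eqn:boundary_condition2}.

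I expect the only delicate point to be the algebraic reduction inside the first evaluation: one must expand the integrand produced by \eqref{eqn:system1}, carefully use $|\tau| = |\tilde\tau| = 1$ and \eqref{eqn:system2} to bring it into the form of a term linear in $v$ plus a multiple of $|v|^2$, and verify that the linear term is exactly the one annihilated by \eqref{eqn:boundary_condition2}. Everything else --- the integration by parts, the Cauchy--Schwarz bound, and the choice of the constant $m$ --- is routine.
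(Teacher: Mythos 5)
Your argument is correct and, in its first half, coincides with the paper's: the identity $\int_0^L u\cdot v'\,dt = -\tfrac12\int_0^L\bigl(k|u| + \beta\,\lambda\cdot\tau\bigr)|v|^2\,dt$ is exactly the computation \eqref{eqn:integral} in the paper's proof. Where you genuinely diverge is in how this identity is converted into a statement about $K_\infty(\tilde\tau)$. The paper expands $|\tilde\tau'|^2 = |\tau'|^2 + 2\tau'\cdot v' + |v'|^2$ pointwise and runs a contradiction argument to extract a set of positive measure on which $u\neq 0$ and $|\tilde\tau'|\ge k - 2M\bigl\|\sqrt{\beta}\,v\bigr\|_{L^2(0,L)}^2$; you instead pair $\tilde\tau'$ directly against $u$, using $\int_0^L u\cdot\tilde\tau'\,dt\le K_\infty(\tilde\tau)\int_0^L|u|\,dt$ and $\int_0^L u\cdot\tau'\,dt = k\int_0^L|u|\,dt$. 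This calibration step is shorter, avoids both the positive-measure-set argument and the square-root estimate, and yields an equally valid (if different) constant $m$; the minimiser case falls out the same way in both treatments.

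Two smaller points. First, your intermediate expression for the integrand has a sign slip: substituting $\tau\cdot v = -\tfrac12|v|^2$ gives $-\beta\,\lambda\cdot v - \tfrac12\bigl(\beta\,\lambda\cdot\tau + k|u|\bigr)|v|^2$, not $+\tfrac12(\cdots)$; your displayed identity is nevertheless the correct one, so this is only a typo. Second, and more substantively, you assert $k = K_\infty(\tau)$ without justification. Equation \eqref{eqn:system2} only gives $|\tau'| = k$ on $\{u\neq 0\}$, so a priori $K_\infty(\tau)$ could exceed $k$ if $|\tau'|>k$ somewhere on $\Sigma = u^{-1}(\{0\})$, in which case your final inequality bounds $k$ rather than $K_\infty(\tau)$. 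The paper's proof opens precisely by ruling this out: it shows $\tau' = 0$ a.e.\ on $\Sigma$ (via a Gronwall-type argument forcing $\Sigma=\emptyset$ when $\lambda = 0$, and via $u'=0$ a.e.\ on $\Sigma$ forcing $\tau = \pm\lambda/|\lambda|$ there when $\lambda\neq 0$). You should either include this step or state explicitly that $k$ is taken to be $K_\infty(\tau)$, as in the preceding proposition of the paper.
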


\begin{proof}
Suppose that equations \eqref{eqn:system1} and \eqref{eqn:system2} hold true.
Let $\Sigma = u^{-1}(\{0\})$. We claim that $\tau' = 0$ almost everywhere on $\Sigma$.
Indeed, if $\lambda = 0$, then it follows from \eqref{eqn:system1} that
$\left|\frac{d}{dt} |u|\right| \le |u| \|\tau'\|_{L^\infty(0, L)}$.
As it is assumed that $u \not\equiv 0$, this inequality implies that $u \neq 0$ throughout $[0, L]$.
If $\lambda \neq 0$, then at almost every point $t \in \Sigma$, either $u'(t) \neq 0$
(so $t$ is an isolated point of $\Sigma$) or $\tau(t) = \pm \lambda/|\lambda|$. As $\tau \in W^{1, \infty}((0, L); S^{n - 1})$,
it has a derivative almost everywhere and we conclude that $\tau' = 0$ almost everywhere in $\Sigma$.

Now consider a competitor $\tilde{\tau} \colon [0, L] \to S^{n - 1}$
satisfying \eqref{eqn:boundary_condition1} and \eqref{eqn:boundary_condition2}.
Let $\sigma = \tilde{\tau} - \tau$
and note that
\[
1 = |\tau + \sigma|^2 = 1 + 2 \tau \cdot \sigma + |\sigma|^2
\]
in $[0, L]$. Hence
\[
\tau \cdot \sigma = - \frac{|\sigma|^2}{2}.
\]
Furthermore, the definition of $\sigma$ guarantees that $\sigma(0) = \sigma(L) = 0$ and
\[
\int_0^L \beta \sigma \, dt = 0.
\]
Observing that $u \cdot \tau' = k|u|$ because of \eqref{eqn:system2}, we now use \eqref{eqn:system1} to compute
\begin{equation} \label{eqn:integral}
\begin{split}
\int_0^L \sigma' \cdot u \, dt & = - \int_0^L \sigma \cdot u' \, dt \\
& = \int_0^L \left((k|u| + \beta \lambda \cdot \tau) \tau \cdot \sigma - \beta \lambda \cdot \sigma\right) \, dt \\
& = -\frac{1}{2} \int_0^L (k|u|/\beta + \lambda \cdot \tau) \beta |\sigma|^2 \, dt \\
& \ge -\frac{1}{2} \left(k\|u/\beta\|_{L^\infty(0, L)} + |\lambda|\right) \bigl\|\sqrt{\beta} \sigma\bigr\|_{L^2(0, L)}^2.
\end{split}
\end{equation}
Set
\[
M = \frac{k\|u/\beta\|_{L^\infty(0, L)} + |\lambda|}{2\|u\|_{L^1(0, L)}}.
\]
Then there exists a set $A \subseteq [0, L]$ of positive measure such that
$\sigma' \cdot u \ge -M \|\sqrt{\beta} \sigma\|_{L^2(0, L)}^2 |u|$ and $u \neq 0$ in $A$.
(Otherwise, we would conclude that
\[
\begin{split}
\int_0^L \sigma' \cdot u \, dt & = \int_{(0, L) \setminus \Sigma} \sigma' \cdot u \, dt \\
& < -M \bigl\|\sqrt{\beta} \sigma\bigr\|_{L^2(0, L)}^2 \int_{(0, L) \setminus \Sigma} |u| \, dt \\
& = -M \bigl\|\sqrt{\beta} \sigma\bigr\|_{L^2(0, L)}^2 \int_0^L |u| \, dt \\
& = -\frac{1}{2} \left(k\|u/\beta\|_{L^\infty(0, L)} + |\lambda|\right) \bigl\|\sqrt{\beta} \sigma\bigr\|_{L^2(0, L)}^2,
\end{split}
\]
in contradiction to \eqref{eqn:integral}.)
Hence
\[
\sigma' \cdot \tau' = \sigma' \cdot \frac{k u}{|u|} \ge - kM \bigl\|\sqrt{\beta} \sigma\bigr\|_{L^2(0, L)}^2
\]
almost everywhere in $A$. As $|\tau'| = k$ almost everywhere in $A$, it follows that
\[
\begin{split}
|\tilde{\tau}'| & = \sqrt{|\tau'|^2 + 2\tau' \cdot \sigma' + |\sigma'|^2} \\
& \ge \sqrt{k^2 - 2kM \bigl\|\sqrt{\beta} \sigma\bigr\|_{L^2(0, L)}^2} \\
& \ge k - 2M\bigl\|\sqrt{\beta} \sigma\bigr\|_{L^2(0, L)}^2
\end{split}
\]
almost everywhere in $A$ (unless the right-hand side is negative, in which case the intermediate
expression should be replaced by $0$). In particular,
\[
K_\infty(\tilde{\tau}) \ge K_\infty(\tau) - 2M \int_0^L \beta |\tilde{\tau} - \tau'|^2 \, dt.
\]
That is, we have shown that $\tau$ is a pseudo-minimiser.

Finally, if $k|u| + \beta \lambda \cdot \tau \le 0$, we can improve
\eqref{eqn:integral} and conclude that
\[
\int_0^L \sigma' \cdot u \, dt \ge 0.
\]
So there exists a set of positive measure $A \subseteq [0, L]$ where $u \neq 0$ and $\sigma' \cdot \tau' \ge 0$.
Thus $|\tilde{\tau}'|^2 \ge |\tau'|^2 = k^2$ almost everywhere in $A$, and
it follows immediately that $K_\infty(\tau) \le K_\infty(\tilde{\tau})$.
\end{proof}

Next we reformulate the system \eqref{eqn:system1}, \eqref{eqn:system2}.
We obtain the system \eqref{eqn:ODE_rescaled1}, \eqref{eqn:ODE_rescaled2} below,
which corresponds to \eqref{eqn:ODE1}, \eqref{eqn:ODE2} up to the reparametrisation
from Section~\ref{sect:reparametrisation}.

\begin{proposition} \label{prop:equivalence}
Suppose that $\tau \in W^{1, \infty}((0, L); S^{n - 1})$. Let $\lambda \in \R^n$ and $k \ge 0$.
\begin{enumerate}
\item
Suppose that $u \in W^{1, \infty}((0, L); \R^n) \setminus \{0\}$ satisfies \eqref{eqn:system1}
and \eqref{eqn:system2} almost everywhere in $(0, L)$. Then there exists
$f \in W^{1, \infty}(0, L) \setminus \{0\}$ with $f \ge 0$ such that
\begin{align}
f(\tau'' + k^2 \tau) & = \beta k^2 \proj_{\tau, \tau'}^\perp(\lambda), \label{eqn:ODE_rescaled1} \\
f' & = \beta \lambda \cdot \tau' \label{eqn:ODE_rescaled2},
\end{align}
weakly in $(0, L)$. If $k > 0$, then $f = k|u|$ has this property.
\item
Suppose that there exists
$f \in W^{1, \infty}(0, L) \setminus \{0\}$ with $f \ge 0$ satisfying \eqref{eqn:ODE_rescaled1} and \eqref{eqn:ODE_rescaled2}
weakly in $(0, L)$. Then there exists $u \in W^{1, \infty}((0, L); \R^n) \setminus \{0\}$ such that \eqref{eqn:system1}
and \eqref{eqn:system2} hold almost everywhere; and if $k > 0$, such that also $f = k|u|$.
\item
If there exists $f \in W^{1, \infty}(0, L) \setminus \{0\}$ with $f \ge 0$ such that
\eqref{eqn:ODE_rescaled1} and \eqref{eqn:ODE_rescaled2} hold weakly and $f + \beta \lambda \cdot \tau \le 0$
in $(0, L)$, then $\tau$ minimises $K_\infty$ subject to the constraints \eqref{eqn:boundary_condition1} and \eqref{eqn:boundary_condition2}.
\end{enumerate}
\end{proposition}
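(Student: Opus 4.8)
The plan is to treat the scalar system \eqref{eqn:ODE_rescaled1}, \eqref{eqn:ODE_rescaled2} and the vector system \eqref{eqn:system1}, \eqref{eqn:system2} as two encodings of the same object, the dictionary being $f = k|u|$ and $u = k^{-2}f\tau'$ (valid when $k > 0$), and then to read off part~3 from part~2 and the last assertion of Proposition~\ref{prop:DE=>pseudo-minimiser}. Throughout one uses that $|\tau| \equiv 1$ forces $\tau \cdot \tau' = 0$ a.e. I would dispose of the degenerate case $k = 0$ first: in the forward direction, $k = 0$ together with \eqref{eqn:system2} and the claim $\tau' = 0$ a.e.\ on $\{u = 0\}$ from the proof of Proposition~\ref{prop:DE=>pseudo-minimiser} shows that $\tau$ is constant, so $f \equiv 1$ works; in the backward direction, \eqref{eqn:ODE_rescaled1} with $k = 0$ makes $\tau$ affine, hence (being unit) constant, on the open set $\{f > 0\}$, so $f' = \beta\lambda\cdot\tau' = 0$ there, so $f$ is a positive constant on each component of $\{f>0\}$, which by continuity of $f$ and $f \ge 0$ forces $\{f>0\} \supseteq (0,L)$, and one takes $u$ constant or a primitive of $\beta(\lambda - (\lambda\cdot\tau)\tau)$. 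Assume $k > 0$ from now on.

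For part~1, put $f = k|u| \in W^{1,\infty}(0,L)\setminus\{0\}$, which is nonnegative. Equation \eqref{eqn:system2} says $u \parallel \tau'$ with $|\tau'| \equiv k$ on $\Omega := \{u \ne 0\}$, so $k^2 u = f\tau'$ on all of $[0,L]$ (both sides vanish off $\Omega$); in particular $f\tau' \in W^{1,\infty}$. Differentiating $f\tau' = k^2 u$ and using \eqref{eqn:system1} together with $u\cdot\tau = 0$ and $u\cdot\tau' = k|u| = f$ a.e., one gets $(f\tau')' = k^2\beta(\lambda - (\lambda\cdot\tau)\tau) - k^2 f\tau$; contracting \eqref{eqn:system1} with $u/|u| = \tau'/k$ on $\Omega$ gives $f' = \beta\lambda\cdot\tau'$ there, hence a.e.\ (off $\Omega$ both $f$ and $\tau'$ vanish a.e.). Writing $(f\tau')' = f'\tau' + f\tau''$ and substituting $f'$ yields $f\tau'' + k^2 f\tau = \beta k^2 \proj_{\tau,\tau'}^\perp(\lambda)$ a.e.: on $\Omega$ because there $|\tau'| = k$, so $\beta k^2\proj_{\tau,\tau'}^\perp(\lambda)$ differs from $k^2\beta(\lambda - (\lambda\cdot\tau)\tau)$ by exactly the term $\beta(\lambda\cdot\tau')\tau'$ that has to be subtracted; off $\Omega$ because $\tau' = 0$ a.e.\ there, so $\proj_{\tau,\tau'}^\perp = \proj_\tau^\perp$. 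Since $f\tau' \in W^{1,\infty}$, this a.e.\ identity is precisely the weak form of \eqref{eqn:ODE_rescaled1}, and \eqref{eqn:ODE_rescaled2} is the relation $f' = \beta\lambda\cdot\tau'$ just obtained.

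For the converse, part~2, the step I expect to be the main obstacle is recovering $|\tau'| \equiv k$ on $\{f > 0\}$ — the content of \eqref{eqn:system2}, which is not visible in the scalar system — purely from \eqref{eqn:ODE_rescaled1}. The weak form of \eqref{eqn:ODE_rescaled1} says that $f\tau' \in W^{1,\infty}$ with $(f\tau')' = f'\tau' - k^2 f\tau + \beta k^2\proj_{\tau,\tau'}^\perp(\lambda)$. Contracting this with $\tau$ and using $\tau\cdot\tau' = 0$ and $\tau \perp \proj_{\tau,\tau'}^\perp(\lambda)$ gives $\tau\cdot(f\tau')' = -k^2 f$; on the other hand, differentiating the identity $\tau\cdot(f\tau') = f(\tau\cdot\tau') \equiv 0$ gives $\tau\cdot(f\tau')' = -f|\tau'|^2$. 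Hence $f|\tau'|^2 = k^2 f$ a.e., so $|\tau'| \equiv k$ on $\{f > 0\}$. Now set $u := k^{-2}f\tau' \in W^{1,\infty}((0,L);\R^n)$, which is not identically zero since $|\tau'| = k > 0$ where $f > 0$. On $\{f > 0\}$ one has $|u| = f/k$ and $u\cdot\tau' = f$, so \eqref{eqn:system2} holds and \eqref{eqn:system1} follows by substituting the formula for $(f\tau')'$ together with \eqref{eqn:ODE_rescaled2}. On $\{f = 0\}$ one has $f' = 0$ a.e.\ (a nonnegative $W^{1,\infty}$ function has vanishing derivative on its zero set), hence $\lambda\cdot\tau' = 0$ a.e.\ by \eqref{eqn:ODE_rescaled2}, and since $f\tau' \equiv 0$ there forces $(f\tau')' = 0$ a.e.\ there, the formula for $(f\tau')'$ gives $\proj_{\tau,\tau'}^\perp(\lambda) = 0$, so $\lambda - (\lambda\cdot\tau)\tau = 0$, a.e.\ on $\{f = 0\}$; thus \eqref{eqn:system1} and \eqref{eqn:system2} hold (trivially) there too, and $f = k|u|$ everywhere.

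Part~3 is then immediate: apply part~2 to obtain $u \in W^{1,\infty}((0,L);\R^n)\setminus\{0\}$ satisfying \eqref{eqn:system1} and \eqref{eqn:system2}; since $k|u| \le f$ (with equality when $k > 0$, and $k|u| = 0 \le f$ when $k = 0$), the hypothesis $f + \beta\lambda\cdot\tau \le 0$ gives $k|u| + \beta\lambda\cdot\tau \le f + \beta\lambda\cdot\tau \le 0$, so the last assertion of Proposition~\ref{prop:DE=>pseudo-minimiser} applies and shows that $\tau$ minimises $K_\infty$ under \eqref{eqn:boundary_condition1} and \eqref{eqn:boundary_condition2}. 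The only routine work is the split into $\{f > 0\}$ and $\{f = 0\}$ in parts~1 and~2; the genuinely non-formal ingredient is the contraction with $\tau$ that produces $|\tau'| \equiv k$ in the backward direction.
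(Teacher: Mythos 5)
Your proposal is correct and follows essentially the same route as the paper: the dictionary $f = k|u|$, $u = k^{-2}f\tau'$, the recovery of $|\tau'| \equiv k$ on $\{f > 0\}$ by contracting with $\tau$ and comparing with the derivative of $|\tau|^2 = 1$, the splitting into $\{f > 0\}$ and its complement (where $f' = 0$, $\lambda \cdot \tau' = 0$ and $\lambda \parallel \tau$ a.e.), and the reduction of part 3 to Proposition \ref{prop:DE=>pseudo-minimiser}. The only difference is cosmetic: you treat $f\tau' = k^2u$ as a single global $W^{1,\infty}$ object and identify its a.e.\ derivative, where the paper first derives the pointwise equation on $\{f>0\}$ and patches the weak form by integrating by parts over connected components.
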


\begin{proof}
Suppose first that we have a weak solution of \eqref{eqn:system1} and \eqref{eqn:system2}
for some $u \in W^{1, \infty}((0, L); \R^n) \setminus \{0\}$. Let $\Omega = \set{t \in [0, L]}{u(t) \neq 0}$.

If $k = 0$, then $\tau' = 0$ in $\Omega$ by \eqref{eqn:system2}. With the same arguments as in
the proof of Proposition \ref{prop:DE=>pseudo-minimiser}, we show that $\tau' = 0$ almost everywhere in $[0, L] \setminus \Omega$.
Hence \eqref{eqn:ODE_rescaled1}, \eqref{eqn:ODE_rescaled2} automatically hold true for any constant
function $f$.

If $k > 0$, then we consider the function $f = k|u|$.
Equation \eqref{eqn:system2} then implies that $u = f\tau'/k^2$ almost everywhere.
We conclude that $\tau' = k^2 u/f$ in $\Omega$, so $\tau \in W_\loc^{2, \infty}(\Omega)$.
Hence from \eqref{eqn:system1} we derive the equation
\begin{equation} \label{eqn:DE4}
f(\tau'' + |\tau'|^2 \tau) + f' \tau' = k^2\beta (\lambda - (\lambda \cdot \tau)\tau)
\end{equation}
almost everywhere in $\Omega$.
Taking the inner product with $\tau'$ and observing that $\tau \cdot \tau' = 0$ (because $|\tau| \equiv 1$)
and $\tau'' \cdot \tau' = 0$ (because $|\tau'| \equiv k$ in $\Omega$), we see that
\[
k^2f' = k^2 \beta \lambda \cdot \tau'.
\]
This amounts to equation \eqref{eqn:ODE_rescaled2}.
Of course $f \ge 0$ by the definition of $f$.

Differentiating the equation $\tau \cdot \tau' = 0$, we see that $\tau \cdot \tau'' + |\tau'|^2 = 0$.
Recalling that $\tau' \cdot \tau'' = 0$, we conclude that
\[
\proj_{\tau, \tau'}^\perp(\tau'') = \tau'' - (\tau \cdot \tau'') \tau = \tau'' + k^2 \tau
\]
in $\Omega$.
Applying $\proj_{\tau, \tau'}^\perp$ to both sides of
\eqref{eqn:DE4}, we see that \eqref{eqn:ODE_rescaled1} holds almost everywhere in $\Omega$.
Also note that the function $f\tau' = k^2 u$ is continuous. Thus if $(t_1, t_2)$
is any connected component of $\Omega$, then for any $\xi \in C_0^\infty((0, L); \R^n)$,
\begin{multline*}
\int_{t_1}^{t_2} \left(f(\tau' \cdot \xi' - k^2 \tau \cdot \xi) + f'\tau' \cdot \xi + k^2 \beta \proj_{\tau, \tau'}^\perp(\lambda) \cdot \xi\right) \, dt \\
= k^2u(t_2) \cdot \xi(t_2) - k^2u(t_1) \cdot \xi(t_1) = 0.
\end{multline*}
A similar conclusion holds if $[0, t_2)$ or $(t_1, L]$ is a connected component of $\Omega$.
Away from $\Omega$, we know that $u = 0$ and therefore either $\lambda = 0$ or $\tau = \pm \lambda/|\lambda|$
almost everywhere in $[0, L] \setminus \Omega$ by \eqref{eqn:system1}. Hence \eqref{eqn:ODE_rescaled1}
holds weakly in all of $(0, L)$.

Conversely, suppose that we have a weak solution of \eqref{eqn:ODE_rescaled1}, \eqref{eqn:ODE_rescaled2}
for $f \in W^{1, \infty}(0, L) \setminus \{0\}$ with $f \ge 0$.
Consider the open set $\Omega = \set{t \in [0, L]}{f(t) \neq 0}$.
Here we can use \eqref{eqn:ODE_rescaled1} to conclude that $\tau \in W_\loc^{2, \infty}(\Omega)$.
We differentiate the equation $|\tau|^2 = 1$ twice
and we obtain $\tau'' \cdot \tau + |\tau'|^2 = 0$ almost everywhere
in $\Omega$. On the other hand, multiplying both sides of \eqref{eqn:ODE_rescaled1}
with $\tau$, we find that $\tau'' \cdot \tau + k^2 = 0$ in $\Omega$. Hence $|\tau'| \equiv k$ in $\Omega$.

If $k = 0$, then $\tau' \equiv 0$ in $\Omega$ and \eqref{eqn:ODE_rescaled2} implies that $f$ is
locally constant in $\Omega$. So in this case, it follows that $\Omega = [0, L]$
and \eqref{eqn:system2} is automatically satisfied. Moreover, it is then easy to find
$u \in W^{1, \infty}((0, L); \R^n) \setminus \{0\}$ that solves \eqref{eqn:system1}.

If $k > 0$, then we claim that \eqref{eqn:DE4} is satisfied in $\Omega$. In order to see why, we split the equation
into three parts by projecting orthogonally onto the spaces $\R \tau(t)$ and $\R \tau'(t)$ and onto the
orthogonal complement of $\R \tau(t) \oplus \R \tau'(t)$ at almost every $t \in \Omega$.
The projection onto $\R \tau(t)$ is trivial. The projection onto $\R \tau'(t)$ amounts to
\eqref{eqn:ODE_rescaled2}, and applying $\proj_{\tau(t), \tau'(t)}^\perp$ gives \eqref{eqn:ODE_rescaled1}.
Thus we have a solution of \eqref{eqn:DE4} in $\Omega$.

Setting $u = f\tau'/k^2$,
we can then verify \eqref{eqn:system1} and \eqref{eqn:system2} in $\Omega$.
Outside of $\Omega$, we know that $f = 0$ and $u = 0$. Hence \eqref{eqn:ODE_rescaled2} implies that
$\lambda \cdot \tau' = 0$ almost everywhere outside of $\Omega$.
Moreover, \eqref{eqn:ODE_rescaled1} implies that $\proj_{\tau, \tau'}^\perp(\lambda) = 0$
almost everywhere in $[0, L] \setminus \Omega$.
That is, $\lambda$ is a multiple of $\tau$ and \eqref{eqn:system1}, \eqref{eqn:system2} are satisfied almost everywhere
in $[0, L] \setminus \Omega$ as well.

Furthermore, if $f + \beta \lambda \cdot \tau \le 0$, then $k|u| + \beta \lambda \cdot \tau \le 0$, and
the last statement follows from Proposition \ref{prop:DE=>pseudo-minimiser}.
\end{proof}

As mentioned previously, the new system of differential equations \eqref{eqn:ODE_rescaled1}, \eqref{eqn:ODE_rescaled2}
corresponds to \eqref{eqn:ODE1}, \eqref{eqn:ODE2} up to the reparametrisation from Section
\ref{sect:reparametrisation}. But Proposition \ref{prop:equivalence} requires only that $\lambda \in \R^n$,
whereas $\lambda \in S^{n - 1}$ in Theorem~\ref{thm:DE}. For this reason, the following
observation is useful.

\begin{lemma} \label{lem:lambda}
Let $\tau \in W^{1, \infty}((0, L); S^{n - 1})$, $\lambda \in \R^n$, and $f \in W^{1, \infty}(0, L) \setminus \{0\}$
with $f \ge 0$ such that \eqref{eqn:ODE_rescaled1}, \eqref{eqn:ODE_rescaled2}
hold weakly. Then there exist $\tilde{f} \in W^{1, \infty}(0, L) \setminus \{0\}$
with $\tilde{f} \ge 0$ and $\tilde{\lambda} \in S^{n - 1}$ such that \eqref{eqn:ODE_rescaled1}, \eqref{eqn:ODE_rescaled2}
hold weakly for $\tilde{f}$ instead of $f$ and for $\tilde{\lambda}$ instead of $\lambda$ as well.
\end{lemma}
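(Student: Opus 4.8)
The plan is to treat separately the cases $\lambda \neq 0$ and $\lambda = 0$; only the second requires any real work, since $0 \notin S^{n-1}$.

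If $\lambda \neq 0$, I would simply normalise. Both equations \eqref{eqn:ODE_rescaled1} and \eqref{eqn:ODE_rescaled2} are linear in the pair $(f,\lambda)$: the left-hand sides are linear in $f$, and the right-hand sides are linear in $\lambda$, because the projection $\proj_{\tau,\tau'}^\perp$ does not depend on $\lambda$. Dividing both equations by $|\lambda|$ therefore shows that they continue to hold weakly with $\tilde\lambda = \lambda/|\lambda| \in S^{n-1}$ and $\tilde f = f/|\lambda|$, and clearly $\tilde f \in W^{1,\infty}(0,L)\setminus\{0\}$ with $\tilde f \ge 0$.

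Now suppose $\lambda = 0$. Then \eqref{eqn:ODE_rescaled2} forces $f' = 0$, so $f$ is a positive constant, and \eqref{eqn:ODE_rescaled1} reduces to the assertion that $\tau$ is a weak — hence, by a standard bootstrap, classical — solution of $\tau'' + k^2\tau = 0$. Together with $|\tau| \equiv 1$, this forces $\tau(t) = \cos(kt) A + \sin(kt) B$ with an orthonormal pair $A, B \in \R^n$ when $k > 0$, and $\tau$ constant when $k = 0$; in either case $\tau(t)$ and $\tau'(t)$ span a subspace $P \subseteq \R^n$ that is independent of $t$. I would then choose any unit vector $\tilde\lambda \in P$, so that $\proj_{\tau,\tau'}^\perp(\tilde\lambda) = 0$ on all of $[0,L]$. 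With this choice the right-hand side of \eqref{eqn:ODE_rescaled1} vanishes identically, and, since $\tau'' + k^2\tau = 0$, an integration by parts (using $\tau \in C^\infty$ and $(\tilde f\tau')' = \tilde f'\tau' + \tilde f\tau''$) shows that the weak form of \eqref{eqn:ODE_rescaled1} holds for \emph{every} $\tilde f \in W^{1,\infty}(0,L)$. It remains to arrange \eqref{eqn:ODE_rescaled2}: the function $\tilde\lambda \cdot \tau'$ is bounded (it equals $-k\sin(kt)$ in the coordinates above, and $0$ when $k = 0$), and $\beta = \alpha \circ \phi$ is bounded because $\alpha$ is of bounded variation; hence
\[
\tilde f(t) = C - \int_0^t \beta(\sigma)\, \tilde\lambda\cdot\tau'(\sigma)\, d\sigma
\]
is Lipschitz, satisfies \eqref{eqn:ODE_rescaled2}, and is $\ge 1 > 0$ on $[0,L]$ provided the constant $C$ is chosen large enough; in particular $\tilde f \in W^{1,\infty}(0,L)\setminus\{0\}$ with $\tilde f \ge 0$.

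The only delicate point — the main obstacle, modest as it is — is the degenerate case $\lambda = 0$: one has to notice that the system collapses, extract from $|\tau|\equiv 1$ and $\tau'' + k^2\tau = 0$ that $\tau(t),\tau'(t)$ span a fixed subspace, and then verify that the resulting vanishing of the projection term makes \eqref{eqn:ODE_rescaled1} automatic, so that $\tilde f$ can be obtained by a single integration of \eqref{eqn:ODE_rescaled2} and shifted by a constant to stay nonnegative.
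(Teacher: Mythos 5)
Your proposal is correct and follows essentially the same route as the paper: normalise when $\lambda \neq 0$, and when $\lambda = 0$ observe that $f$ is a positive constant, that $\tau'' + k^2\tau = 0$ forces $\tau$ to trace a great circle so that $\tau(t)$ and $\tau'(t)$ span a fixed plane, pick a unit $\tilde{\lambda}$ in that plane so the projection term vanishes, and recover $\tilde{f}$ by integrating \eqref{eqn:ODE_rescaled2} and adding a large constant. The only cosmetic difference is that you solve the ODE explicitly to identify the great circle, whereas the paper cites the argument from Proposition \ref{prop:equivalence} to get $|\tau'| \equiv k$ first.
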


\begin{proof}
If $\lambda \neq 0$, then it suffices to define $\tilde{f} = f/|\lambda|$ and $\tilde{\lambda} = \lambda/|\lambda|$
and check that both equations are still satisfied.
If $\lambda = 0$, then $f$ is constant and positive. Hence $\tau'' + k^2 \tau = 0$ in $(0, L)$.
With the same arguments as in the proof of Proposition \ref{prop:equivalence}, we see
that $|\tau'| \equiv k$. The resulting equation $\tau'' + |\tau'|^2 \tau = 0$ means that
$\tau$ follows a geodesic, i.e., a great circle on $S^{n - 1}$.
This implies that $\tau(t)$ and $\tau'(t)$ span the same two-dimensional
subspace of $\R^n$ everywhere, and any $\tilde{\lambda}$ in this subspace will satisfy
$\proj_{\tau, \tau'}^\perp(\tilde{\lambda}) = 0$. Now we choose $\tilde{f}$ such
that \eqref{eqn:ODE_rescaled2} holds true (for $\tilde{\lambda}$ instead of $\lambda$)
and at the same time $\tilde{f} > 0$ in $[0, L]$. Then both equations
are satisfied.
\end{proof}

We now have all the tools for the proofs of the first two results in the
introduction.

\begin{proof}[Proofs of Theorem \ref{thm:DE} and Theorem \ref{thm:minimiser}]
With the reparametrisation from Section \ref{sect:reparametrisation}, an $\infty$-elastica
gives rise to a pseudo-minimiser of $K_\infty$ and vice versa.
According to Proposition \ref{prop:pseudo-minimiser=>DE} and Proposition \ref{prop:DE=>pseudo-minimiser},
pseudo-minimisers of $K_\infty$ correspond to solutions of \eqref{eqn:system1}, \eqref{eqn:system2},
which is equivalent to \eqref{eqn:ODE_rescaled1}, \eqref{eqn:ODE_rescaled2} by Proposition \ref{prop:equivalence}.
Lemma \ref{lem:lambda} shows that it suffices to consider this system for $\lambda \in S^{n - 1}$.
Now we check that the system corresponds to \eqref{eqn:ODE1}, \eqref{eqn:ODE2} for
the original parametrisation, and this proves Theorem \ref{thm:DE}.
Theorem \ref{thm:minimiser} follows from the last statement of Proposition \ref{prop:equivalence}.
\end{proof}

\section{Preparation for the proof of Theorem \ref{thm:structure}}

The system of ordinary differential equations \eqref{eqn:ODE_rescaled1}, \eqref{eqn:ODE_rescaled2}
becomes degenerate at points where $f$ vanishes. It turns out, however,
that $f$ remains positive for \emph{generic} solutions as described in the following result.
This information will be crucial for statement \ref{item:3D} in Theorem \ref{thm:structure}.

\begin{lemma} \label{lem:ODE}
Let $\lambda, \tau_0 \in S^{n - 1}$ and $\tau_1 \in \R^n$
such that $\tau_0 \perp \tau_1$, and let $f_0 > 0$ and $t_0 \in [0, L]$. If the vectors $\tau_0$, $\tau_1$, and $\lambda$
are linearly independent, then the initial value problem
\begin{align}
\tau'' + |\tau'|^2 \tau & = \beta f^{-1} |\tau'|^2 \proj_{\tau, \tau'}^\perp(\lambda), \label{eqn:ODE_reformulated}  \\
f' & = \beta \lambda \cdot \tau', \nonumber\\
\tau(t_0) & = \tau_0, \quad \tau'(t_0) = \tau_1, \quad f(t_0) = f_0, \nonumber
\end{align}
has a unique global solution, consisting of $\tau \colon [0, L] \to S^{n - 1}$ and $f \colon [0, L] \to (0, \infty)$.
For all $t \in [0, L]$, this solution satisfies $|\tau'(t)| = |\tau_1|$ and $\lambda \cdot \tau(t) \neq \pm 1$,
and $\tau(t)$ remains in the linear subspace of $\R^n$ spanned by
$\tau_0$, $\tau_1$, and $\lambda$.
\end{lemma}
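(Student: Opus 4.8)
The plan is to rewrite the initial value problem as a first-order Carathéodory system and then to produce enough conserved quantities to exclude both finite-time blow-up and the degeneracy $f=0$. Setting $v=\tau'$ and treating $(\tau,v,f)$ as the unknown, \eqref{eqn:ODE_reformulated} together with the equation for $f$ becomes $\tau'=v$, $v'=-|v|^2\tau+\beta f^{-1}|v|^2\proj_{\tau,v}^\perp(\lambda)$, $f'=\beta\lambda\cdot v$. On the open set $U\subseteq\R^n\times\R^n\times\R$ where $\tau\neq0$, $v\neq0$, $v\notin\R\tau$ and $f\neq0$, the right-hand side depends smoothly on $(\tau,v,f)$ (the projection $\proj_{\tau,v}^\perp$ is smooth once $\tau,v$ are independent), and since $\beta$ is bounded and bounded below (as $\alpha$ is of bounded variation with $1/\alpha$ bounded), it is a Carathéodory map which is locally Lipschitz in $(\tau,v,f)$ uniformly in $t$. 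Because $\tau_0\perp\tau_1$ with $|\tau_0|=1$ and $\tau_1\neq0$ (linear independence of $\tau_0,\tau_1,\lambda$ forces $\tau_1\neq0$), the initial data lies in $U$, so standard theory gives a unique maximal solution, with $\tau\in W^{2,\infty}$ and $f\in W^{1,\infty}$, on an interval $I\ni t_0$ relatively open in $[0,L]$ and taking values in $U$.

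First I would extract the conservation laws on $I$. Using that $\tau\cdot\proj_{\tau,\tau'}^\perp(\lambda)=0$, the quantities $E_1=|\tau|^2-1$ and $E_2=\tau\cdot\tau'$ satisfy the closed linear system $E_1'=2E_2$, $E_2'=-|\tau'|^2E_1$ with zero initial data, so $|\tau|\equiv1$ and $\tau\perp\tau'$ on $I$; differentiating $\tau\cdot\tau'=0$ again gives $\tau'\cdot\tau''=0$, hence $|\tau'|\equiv|\tau_1|$. Next, set $h=|\proj_{\tau,\tau'}^\perp(\lambda)|^2=1-(\lambda\cdot\tau)^2-(\lambda\cdot\tau')^2/|\tau_1|^2$. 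Using $\lambda\cdot\proj_{\tau,\tau'}^\perp(\lambda)=h$ one computes $h'=-2\beta f^{-1}(\lambda\cdot\tau')\,h=-2(f'/f)\,h$, so $(f^2h)'=0$ and therefore $f^2h\equiv C:=f_0^2\,h(t_0)$ on $I$. The crucial point is that $C>0$: $h(t_0)$ is the squared norm of the component of $\lambda$ orthogonal to $\mathrm{span}(\tau_0,\tau_1)$, and this is nonzero precisely because $\tau_0,\tau_1,\lambda$ are linearly independent. Discovering and verifying this conserved quantity $f^2h$ is, I expect, the heart of the proof; the rest is comparatively routine.

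With these identities available I would finish as follows. Since $0\le h\le1$, the relation $f^2h=C$ gives $f=\sqrt{C/h}\ge\sqrt C>0$, so the solution never approaches the singular set $\{f=0\}$; combined with $|\tau|\equiv1$, $|\tau'|\equiv|\tau_1|$, $|\proj_{\tau,\tau'}^\perp(\lambda)|\le1$ and $f^{-1}\le C^{-1/2}$, this yields a uniform bound on $\tau''$, while $|f'|\le\|\beta\|_{L^\infty(0,L)}|\tau_1|$ bounds $f'$ and keeps $f$ bounded above via $f(t)=f_0+\int_{t_0}^t\beta\lambda\cdot\tau'$. Hence $(\tau,\tau',f)$ remains in a fixed compact subset of $U$ (the constraints $|\tau|=1$, $|\tau'|=|\tau_1|\neq0$, $\tau\perp\tau'$ keep it away from the locus $v\parallel\tau$, and $f\ge\sqrt C$ keeps it away from $f=0$), so the maximal solution cannot exit $U$ and therefore $I=[0,L]$. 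Finally, $h=C/f^2>0$ together with $h\le1-(\lambda\cdot\tau)^2$ forces $\lambda\cdot\tau\in(-1,1)$, i.e. $\lambda\cdot\tau\neq\pm1$; and since $\lambda\in V:=\mathrm{span}(\tau_0,\tau_1,\lambda)$, the right-hand side of the system maps $\{\tau,v\in V\}$ into itself — because $\proj_{\tau,v}^\perp(\lambda)\in V$ whenever $\tau,v\in V$ — so uniqueness forces the solution to stay in $V\times V\times\R$, i.e. $\tau(t)\in V$ for every $t$. Uniqueness of the global solution is inherited from the local Carathéodory uniqueness. The one point that requires care is the order of the argument: one must first work on the maximal interval inside $U$, derive the conservation laws there, and only then invoke the resulting a priori bounds to conclude that $I=[0,L]$.
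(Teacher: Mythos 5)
Your proposal is correct, and it follows the same overall skeleton as the paper (local existence for the non-degenerate first-order system, conservation of $|\tau|$ and $|\tau'|$, an a priori lower bound on $f$, then continuation), but the decisive step is carried out by a genuinely different route. The paper reduces to $n=3$, passes to spherical coordinates $(\varphi,\vartheta)$ with $\lambda=(0,0,1)$, shows separately that $\varphi'$ never vanishes, and integrates $\varphi''/\varphi'=-f'/f-2\vartheta'\cos\vartheta/\sin\vartheta$ to obtain the invariant $|\varphi'|f\sin^2\vartheta=B>0$, whence $f\sin\vartheta\ge B/k$. Your coordinate-free computation $(f^2h)'=0$ with $h=|\proj_{\tau,\tau'}^\perp(\lambda)|^2$ produces exactly the same invariant in disguise: since $|\proj_{\tau,\tau'}^\perp(\lambda)|=|\varphi'|\sin^2\vartheta/k$ in the paper's coordinates, one has $f^2h=(B/k)^2$. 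What your version buys is that it never leaves $\R^n$, it dispenses with the separate argument that $\varphi'\neq 0$ (that fact is subsumed in $C=f_0^2h(t_0)>0$, which is precisely the linear-independence hypothesis), and it makes transparent why both degeneracies $f=0$ and $\lambda\cdot\tau=\pm 1$ are excluded: $f^2\ge C$ from $h\le 1$, and $(\lambda\cdot\tau)^2\le 1-h<1$ from $f$ bounded above. What the paper's version buys is an explicit description of the solution as a curve on $S^2$ winding around the $\lambda$-axis, which is geometrically informative even if not reused later. One small point you handle more carefully than the paper: the upper bound on $f$ coming from $|f'|\le\|\beta\|_{L^\infty}|\tau_1|$ is genuinely needed to keep $h$ (respectively $\sin\vartheta$) bounded away from $0$, and you state it explicitly where the paper says only ``it follows immediately.''
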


\begin{proof}
Under these assumptions, we clearly have a unique solution of the initial value problem
in a certain interval $(t_1, t_2) \cap [0, L]$ such that $\lambda \cdot \tau \neq \pm 1$ and $f > 0$
in that interval.
Multiplying \eqref{eqn:ODE_reformulated} with $\tau$, we see that $\frac{d}{dt}(\tau \cdot \tau') = 0$. Hence the
solution will continue to take values on the sphere $S^2$. Multiplying the equation with $\tau'$,
we further see that $\frac{d}{dt} |\tau'|^2 = 0$. Setting $k = |\tau_1|$, we conclude that
$|\tau'| = k$ in $(t_1, t_2) \cap [0, L]$.
Moreover, if $V \in \R^n$ is any vector perpendicular to $\tau_0$, $\tau_1$, and $\lambda$,
then the function $h = V \cdot \tau$ satisfies
\[
h'' + |\tau'|^2 h = -\beta f^{-1} \left(|\tau'|^2 (\lambda \cdot \tau) h + (\lambda \cdot \tau') h'\right)
\]
in $(t_1, t_2) \cap [0, L]$
and $h(t_0) = h'(t_0) = 0$. Hence $h \equiv 0$, and the solution $\tau$ will
remain in the linear subspace spanned by $\tau_0$, $\tau_1$, and $\lambda$ in $(t_1, t_2) \cap [0, L]$. So we may assume
that $n = 3$ without loss of generality. We may further choose coordinates such that $\lambda = (0, 0, 1)$.

It now suffices to show that $\liminf_{t \searrow t_1} f(t) > 0$ and $\limsup_{t \searrow t_1} |\lambda \cdot \tau(t)| < 1$
(unless $t_1 < 0$) and that $\liminf_{t \nearrow t_2} f(t) > 0$ and $\limsup_{t \nearrow t_2} |\lambda \cdot \tau(t)| < 1$
(unless $t_2 > L$).
The standard theory for ordinary differential equations
will then imply the result.

We use spherical coordinates on $S^2$ and we write
\[
\tau = (\cos \varphi \sin \vartheta, \sin \varphi \sin \vartheta, \cos \vartheta)
\]
for $\varphi, \vartheta \colon (t_1, t_2) \cap [0, L] \to \R$ with $\vartheta(t) \in (0, \pi)$ for all $t \in (t_1, t_2)$.
Writing also
\[
e_1 = (-\sin \varphi, \cos \varphi, 0) \quad \text{and} \quad e_2 = (\cos \varphi \cos \vartheta, \sin \varphi \cos \vartheta, -\sin \vartheta),
\]
we obtain an orthonormal basis $(\tau(t), e_1(t), e_2(t))$ of $\R^3$ such that $e_1(t)$ and $e_2(t)$ span
the tangent space of $S^2$ at $\tau(t)$ for every $t \in (t_1, t_2)\cap [0, L]$. We compute
\[
\tau' = \varphi' \sin \vartheta \, e_1 + \vartheta' \, e_2
\]
and
\[
\tau'' + |\tau'|^2 \tau = \left(\varphi'' \sin \vartheta + 2\varphi' \vartheta' \cos \vartheta\right) e_1 + \left(\vartheta'' - (\varphi')^2 \sin \vartheta \cos \vartheta\right) e_2.
\]
Define $Z = -\vartheta' \, e_1 + \varphi' \sin \vartheta \, e_2$, so that $|Z| = |\tau'|$ and $Z \perp \tau'$. Then
\[
|\tau'|^2 \proj_{\tau, \tau'}^\perp(\lambda) = (\lambda \cdot Z)Z = \varphi' \vartheta' \sin^2 \vartheta \, e_1 - (\varphi')^2 \sin^3 \vartheta \, e_2.
\]
Therefore, we obtain the equations
\begin{align}
\varphi'' \sin \vartheta + 2\varphi' \vartheta' \cos \vartheta & = \beta f^{-1} \varphi'\vartheta' \sin^2 \vartheta, \label{eqn:varphi} \\
\vartheta'' - (\varphi')^2 \sin \vartheta \cos \vartheta & = - \beta f^{-1} (\varphi')^2 \sin^3 \vartheta, \label{eqn:vartheta}
\end{align}
and furthermore
\begin{equation} \label{eqn:f_polar}
f' = - \beta \vartheta' \sin \vartheta.
\end{equation}
For the rest of the proof, it suffices to consider \eqref{eqn:varphi} and \eqref{eqn:f_polar}.

We first claim that $\varphi'$ does not vanish anywhere in $(t_1, t_2) \cap [0, L]$. Otherwise, equation
\eqref{eqn:varphi} would imply that it remains $0$ throughout $(t_1, t_2) \cap [0, L]$,
and $\tau$ would parametrise a piece of a great circle through $(0, 0, 1)$. This, however, is impossible
under the assumption that $\tau_0$, $\tau_1$, and $\lambda$ are linearly independent.

Thus we may divide by $\varphi' \sin \vartheta$ in \eqref{eqn:varphi} and we find that
\[
\frac{\varphi''}{\varphi'} = -\frac{f'}{f} - \frac{2\vartheta' \cos \vartheta}{\sin \vartheta}.
\]
Integrating, we see that there exists $b \in \R$ such that
\[
\log |\varphi'| = - \log f - 2 \log \sin \vartheta + b.
\]
Set $B = e^b$. Then
\[
|\varphi'| = \frac{B}{f \sin^2 \vartheta}.
\]
The equation $(\varphi')^2 \sin^2 \vartheta + (\vartheta')^2 = |\tau'|^2 = k^2$ then implies that
\[
\frac{B^2}{f^2 \sin^2 \vartheta} \le k^2.
\]
It follows immediately that $f$ and $\sin \vartheta$ stay away from $0$ and this concludes the proof.
\end{proof}

The following technical lemma is also required for the proof of Theorem \ref{thm:structure}.

\begin{lemma} \label{lem:series}
Suppose that $(b_i)_{i \in \N}$ is a sequence of positive numbers such that
\[
\sum_{i = 1}^\infty \left|1 - \frac{b_{i + 1}}{b_i}\right| < \infty.
\]
Then $\sum_{i = 1}^\infty b_i = \infty$.
\end{lemma}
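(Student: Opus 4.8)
The plan is to show that the hypothesis forces $b_i$ to decay no faster than geometrically, so that $\sum b_i$ diverges by comparison with a geometric series. Write $r_i = b_{i+1}/b_i$, so that $b_i = b_1 \prod_{j=1}^{i-1} r_j$, and the hypothesis is $\sum_i |1 - r_i| < \infty$. The key observation is that this summability means $r_i \to 1$, and more precisely the infinite product $\prod_{j=1}^\infty r_j$ converges to a strictly positive limit (this is the standard fact that $\prod(1 + a_j)$ converges to a nonzero value when $\sum |a_j| < \infty$). Hence $b_i \to b_\infty$ for some $b_\infty > 0$, and a series of terms bounded below by a positive constant diverges.

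First I would make the reduction precise: since $\sum_i |1 - r_i| < \infty$, there is an index $N$ with $\sum_{i \ge N} |1 - r_i| < \tfrac12$. For $i \ge N$ one then estimates $\log b_i = \log b_N + \sum_{j=N}^{i-1} \log r_j$, and uses the elementary bound $|\log(1 + x)| \le 2|x|$ valid for $|x| \le \tfrac12$ (here $x = r_j - 1$) to conclude $\bigl|\log b_i - \log b_N\bigr| \le 2 \sum_{j \ge N} |1 - r_j| < 1$ for all $i \ge N$. Therefore $b_i \ge b_N e^{-1} > 0$ for every $i \ge N$, and consequently $\sum_{i=1}^\infty b_i \ge \sum_{i=N}^\infty b_i \ge \sum_{i=N}^\infty b_N e^{-1} = \infty$.

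I do not expect any genuine obstacle here; the only point requiring a little care is justifying that the partial products $\prod_{j=N}^{i-1} r_j$ stay bounded away from $0$ uniformly in $i$, which is exactly what the logarithm estimate above delivers. One must also note that $r_j > 0$ for all $j$ (so $\log r_j$ is defined), which holds since all $b_i$ are positive by hypothesis. An alternative, avoiding logarithms, is to use $1 - |1-r_j| \le r_j$ together with the bound $\prod_{j \ge N}(1 - |1-r_j|) > 0$ (again standard for $\sum|1-r_j| < \infty$), giving the same lower bound $b_i \ge b_N \prod_{j \ge N}(1 - |1-r_j|) > 0$; either route finishes the proof in a few lines.
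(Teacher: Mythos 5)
Your proof is correct, but it takes a genuinely different route from the paper's. You observe that $\sum_i |1 - r_i| < \infty$ with $r_i = b_{i+1}/b_i > 0$ forces the infinite product $\prod_j r_j$ to converge to a strictly positive limit (via the logarithm estimate $|\log(1+x)| \le 2|x|$ for $|x| \le \tfrac12$, applied to the tail), so that $b_i$ is eventually bounded below by a positive constant and the series diverges trivially. This actually establishes more than the lemma asserts, namely that $b_i$ converges to a positive limit. The paper instead avoids logarithms and infinite products entirely: it fixes $I$, sorts the ratios $q_i = b_{i+1}/b_i$ into non-decreasing order, truncates them at $1$, and builds a minorising sequence $b_i' \le b_i$; the sorting combined with $\sum_i (1 - q_i') \le \tfrac12$ forces $1 - q_i' \le \tfrac{1}{2i}$, whence $q_i' \ge \tfrac{i}{i+1}$ and $b_i' \ge b_1/i$, so the conclusion follows by comparison with the harmonic series. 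The paper's argument is elementary and self-contained but yields only a decay bound of order $1/i$; yours is shorter, relies on a standard fact about absolutely convergent products, and delivers the sharper conclusion that $\inf_i b_i > 0$ (after finitely many terms). Both are complete proofs; the only points needing care in yours — that $r_j > 0$ so the logarithms are defined, and that the partial products stay uniformly bounded away from $0$ — are handled correctly.
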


\begin{proof}
Ignoring finitely many terms if necessary, we may assume that
\[
\sum_{i = 1}^\infty \left|1 - \frac{b_{i + 1}}{b_i}\right| \le \frac{1}{2}.
\]

Fix $I \in \N$.
Let $q_i = b_{i + 1}/b_i$ for $i = 1, \dotsc, I - 1$. Choose a permutation $S \colon \{1, \dots, I - 1\} \to \{1, \dots, I - 1\}$
such that $q_{S(1)} \le \dotsb \le q_{S(I - 1)}$ and define $q_i' = \min\{q_{S(i)}, 1\}$.
Also define $b_1', \dotsc, b_I' > 0$ by $b_1' = b_1$ and
\[
b_{i + 1}' = q_i' b_i', \quad i = 1, \dotsc, I - 1.
\]
Then
\begin{equation} \label{eqn:quotients}
\sum_{i = 1}^{I - 1} (1 - q_i') \le \sum_{i = 1}^{I - 1} |1 - q_i| \le \frac{1}{2}
\end{equation}
and $b_i' \le b_i$ for all $i = 1, \dots, I$.

As $q_i'$ is non-decreasing in $i$, inequality \eqref{eqn:quotients} implies that
\[
1 - q_i' \le \frac{1}{2i} \le \frac{1}{i + 1}
\]
for $i = 1, \dots, I - 1$. Define $B_i = 1/i$ for $i = 1, \dotsc, I$. Then
\[
\frac{b_{i + 1}'}{b_i'} = q_i' \ge \frac{i}{i + 1} = \frac{B_{i + 1}}{B_i}.
\]
Hence
\[
b_i' = \frac{b_i'}{b_{i - 1}'} \dotsb \frac{b_2'}{b_1'} b_1 \ge \frac{B_i}{B_{i - 1}} \dotsb \frac{B_2}{B_1} b_1 = \frac{B_i}{B_1} b_1 = \frac{b_1}{i}.
\]
It follows that
\[
\sum_{i = 1}^I b_i \ge \sum_{i = 1}^I b_i' \ge b_1 \sum_{i = 1}^I \frac{1}{i}.
\]
Letting $I \to \infty$, we obtain the desired result.
\end{proof}

\section{Proof of Theorem \ref{thm:structure}}

Now we consider the situation of Theorem \ref{thm:structure}.
Suppose first that $\gamma \in \mathcal{G}$ is an $\infty$-elastica and let $k = \K_\alpha(\gamma)$.
If $k = 0$, then $\gamma'' = 0$ almost everywhere and $\gamma$ parametrises
a line segment. Then clearly statement \ref{item:2D} in Theorem \ref{thm:structure}
is satisfied. Therefore, we assume that $k > 0$ henceforth.

Consider the
reparametrised tangent vector field $\tau \colon [0, L] \to S^{n - 1}$ with $\tau(t) = \gamma'(\phi(t))$
for $t \in [0, L]$ as in Section \ref{sect:reparametrisation}.
Then $\tau$ is a pseudo-minimiser of $K_\infty$. Hence by Proposition \ref{prop:pseudo-minimiser=>DE},
there exist $\lambda \in \R^n$ and $u \in W^{1, \infty}((0, L); \R^n) \setminus \{0\}$ such that
\eqref{eqn:system1} and \eqref{eqn:system2}
hold true almost everywhere. According to Proposition \ref{prop:equivalence}, the function
$f = k|u|$ satisfies \eqref{eqn:ODE_rescaled1} and \eqref{eqn:ODE_rescaled2} weakly, and by Lemma \ref{lem:lambda}
we may assume that $\lambda \in S^{n - 1}$.

Let $\Omega = \set{t \in [0, L]}{f(t) > 0}$. Then \eqref{eqn:system2}
implies that $\tau'$ is continuous in $\Omega$ with $|\tau'| \equiv k$.
It follows from \eqref{eqn:ODE_rescaled1} that $\tau \in W_\loc^{2, \infty}(\Omega)$. Moreover, by
standard theory for ordinary differential equations, both $\tau$ and $f$ are locally uniquely determined by their initial conditions
$\tau(t_0)$, $\tau'(t_0)$, and $f(t_0)$ for any $t_0 \in \Omega$.

If $\tau$, $\tau'$, and $\lambda$ are linearly independent anywhere in $\Omega$, then
Lemma \ref{lem:ODE} implies that $\Omega = [0, L]$ and that $\tau$ takes values in a three-dimensional subspace of $\R^n$,
and \eqref{eqn:ODE_rescaled1} and \eqref{eqn:ODE_rescaled2} are satisfied almost everywhere.
Equations \eqref{eqn:ODE1} and \eqref{eqn:ODE2}
now arise when we reverse the reparametrisation from Section \ref{sect:reparametrisation}.
The observation that $\alpha \gamma'' = \tau' \circ \psi$ implies that $\alpha \gamma'' \in W^{1, \infty}((0, \ell); \R^n)$
and that $\alpha |\gamma''| \equiv k$.
Equation \eqref{eqn:ODE2} then implies that $g \in W^{2, \infty}(0, \ell)$. Hence
statement \ref{item:3D} in Theorem \ref{thm:structure} holds true.

This leaves the case when $\tau$, $\tau'$, and $\lambda$ are linearly dependent everywhere in $\Omega$.
We assume this from now on. Then we can say more about
the behaviour of $\tau$ in $\Omega$.

\begin{lemma} \label{lem:Omega}
If $(t_1, t_2) \subseteq \Omega$, then
the restriction of $\tau$ to $(t_1, t_2)$ follows a great circle in $S^{n - 1}$
through $\lambda$ with constant speed $k$. Furthermore, if $(t_1, t_2)$ is a connected component of $\Omega$,
then there exists $t_0 \in (t_1, t_2)$
such that $\tau(t_0) = \pm \lambda$.
\end{lemma}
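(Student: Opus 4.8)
The plan is to use the standing hypothesis that $\tau$, $\tau'$, $\lambda$ are linearly dependent throughout $\Omega$ to collapse \eqref{eqn:ODE_rescaled1} to the harmonic oscillator equation $\tau'' + k^2\tau = 0$, read off the geometry of the orbit directly from its explicit solutions, and then prove the second assertion by a monotonicity argument for $f$ driven by \eqref{eqn:ODE_rescaled2}, using that $f$ vanishes at both endpoints of a component of the form $(t_1,t_2)$.

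\emph{First assertion.} Since $|\tau'| \equiv k > 0$ and $\tau \cdot \tau' = 0$ on $\Omega$, the vectors $\tau(t)$ and $\tau'(t)$ are linearly independent there, so the assumed dependence of $\tau(t)$, $\tau'(t)$, $\lambda$ means $\lambda \in \operatorname{span}\{\tau(t), \tau'(t)\}$ and hence $\proj_{\tau, \tau'}^\perp(\lambda) = 0$ for every $t \in \Omega$. As $f > 0$ on $\Omega$ and $\tau \in W^{2,\infty}_\loc(\Omega)$, equation \eqref{eqn:ODE_rescaled1} reduces to $\tau'' + k^2\tau = 0$ almost everywhere in $\Omega$, and a bootstrap makes $\tau$ a classical solution there. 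On any interval $(t_1, t_2) \subseteq \Omega$, fixing $t_0 \in (t_1, t_2)$ and invoking uniqueness for this linear ODE, we obtain
\[
\tau(t) = \cos\!\big(k(t - t_0)\big)\,\tau(t_0) + \tfrac1k \sin\!\big(k(t - t_0)\big)\,\tau'(t_0),
\]
which parametrises, at speed $k$, an arc of the great circle $S^{n - 1} \cap \operatorname{span}\{\tau(t_0), \tau'(t_0)\}$. Since $\lambda$ is a unit vector lying in this plane, the great circle passes through $\lambda$, which is the first claim.

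\emph{Second assertion.} Let $(t_1, t_2)$ be a connected component of $\Omega$. By maximality neither endpoint lies in $\Omega$, and since $f = k|u|$ is continuous on $[0,L]$ it follows that $f(t_1) = f(t_2) = 0$, while $f > 0$ on $(t_1, t_2)$. In the notation above the plane $\operatorname{span}\{\tau(t_0), \tau'(t_0)\}$ is independent of $t_0 \in (t_1, t_2)$ and contains $\lambda$; pick a unit vector $\lambda^\perp$ in it with $\lambda^\perp \perp \lambda$ and write $\tau = \cos\omega\,\lambda + \sin\omega\,\lambda^\perp$ for a $C^1$ function $\omega$ on $(t_1, t_2)$. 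Then $|\omega'| \equiv k$, so $\omega'$ is a nonzero constant, and $\lambda \cdot \tau' = -\omega'\sin\omega$, so \eqref{eqn:ODE_rescaled2} yields $f' = -\omega'\beta\sin\omega$ almost everywhere on $(t_1, t_2)$. Suppose, for contradiction, that $\tau(t) \ne \pm\lambda$, i.e.\ $\sin\omega(t) \ne 0$, for every $t \in (t_1, t_2)$. Then $\sin\omega$ has constant sign, and since $\beta > 0$ and $\omega'$ is a nonzero constant, $f'$ has constant sign almost everywhere, so $f$ is monotone on $[t_1, t_2]$. Together with $f(t_1) = f(t_2) = 0$ this forces $f \equiv 0$ on $[t_1, t_2]$, contradicting $f > 0$ on $(t_1, t_2)$. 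Hence $\sin\omega(t_0) = 0$, i.e.\ $\tau(t_0) = \pm\lambda$, for some $t_0 \in (t_1, t_2)$.

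I expect the substantive points to be organisational rather than deep: checking that a component written as an open interval $(t_1, t_2)$ genuinely has both endpoints outside $\Omega$ (hence $f$ vanishing at each), and ensuring that the passage from the weak form of \eqref{eqn:ODE_rescaled1} to the classical equation $\tau'' + k^2\tau = 0$, as well as the deduction of monotonicity of the Lipschitz function $f$ from the sign of $f'$, are carried out at the $W^{1,\infty}$/$W^{2,\infty}_\loc$ regularity level rather than assuming smoothness. The geometric input — that $S^{n-1}$ intersected with a $2$-plane through a unit vector is a great circle through that vector — is elementary.
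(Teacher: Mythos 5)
Your proof is correct and follows essentially the same route as the paper: the linear dependence of $\tau$, $\tau'$, $\lambda$ on $\Omega$ confines $\tau$ to a great circle through $\lambda$ traversed at speed $k$, and then \eqref{eqn:ODE_rescaled2} together with $f(t_1)=f(t_2)=0$ forces $\lambda\cdot\tau'$ to change sign, i.e.\ $\tau=\pm\lambda$ somewhere in the component. The one small difference is in the first assertion, where you kill the right-hand side of \eqref{eqn:ODE_rescaled1} via $\proj_{\tau,\tau'}^\perp(\lambda)=0$ and solve $\tau''+k^2\tau=0$ explicitly, which neatly avoids the paper's separate continuity argument at points where $\tau=\pm\lambda$.
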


\begin{proof}
We know that $\tau \cdot \tau' = 0$ everywhere, and $\tau'$ is continuous with $|\tau'| \equiv k$ in $\Omega$.
As $\tau(t)$, $\tau'(t)$, and $\lambda$
are linearly dependent, we further know that $\tau'(t)$ is in the space spanned by $\tau(t)$ and $\lambda$
for every $t \in \Omega$ with $\tau(t) \neq \pm \lambda$. Hence
$\tau$ follows a great circle on $S^{n - 1}$ through $\lambda$ with speed $k$; indeed, by the continuity of $\tau'$,
this is true throughout $(t_1, t_2)$ even if there are any points where $\tau(t) = \pm \lambda$.
If $(t_1, t_2)$ is a connected component of $\Omega$, then $f(t_1) = 0 = f(t_2)$. By
\eqref{eqn:ODE_rescaled2}, this means that $\lambda \cdot \tau'$ must change sign
somewhere in $(t_1, t_2)$. Given what we know about $\tau$ so far, there must exists $t_0 \in (t_1, t_2)$ such that $\tau(t_0) = \pm \lambda$.
\end{proof}

Next consider the set $\Omega' = \set{t \in [0, L]}{\tau(t) \neq \pm \lambda} \cup \Omega$. This is an open
set relative to $[0, L]$ as well.

\begin{lemma} \label{lem:discrete}
The set $\Omega' \setminus \Omega$ is discrete.
\end{lemma}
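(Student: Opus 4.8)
The plan is to show that $\Omega' \setminus \Omega$ cannot have an accumulation point. Recall that $\Omega' \setminus \Omega$ consists of points $t$ where $f(t) = 0$ but $\tau(t) \neq \pm\lambda$. First I would argue that such a point $t$ must be isolated \emph{from the left and from the right}: near any $t_* \in \Omega' \setminus \Omega$, the structure of the equations on $\Omega$ together with Lemma \ref{lem:Omega} constrains the behaviour. Suppose $t_*$ is an accumulation point of $\Omega' \setminus \Omega$ from, say, the right. Then there is a sequence of points $s_j \searrow t_*$ with $f(s_j) = 0$ and $\tau(s_j) \neq \pm\lambda$; between consecutive such points either $f \equiv 0$ on a subinterval, or there are connected components of $\Omega$ squeezed in between. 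In the former case, \eqref{eqn:ODE_rescaled2} forces $\lambda \cdot \tau' = 0$ there, and since $\tau \neq \pm\lambda$ near $t_*$ and $\tau$, $\tau'$, $\lambda$ are linearly dependent, one deduces $\tau' = 0$ on that subinterval; but $|\tau'| \equiv k > 0$ wherever $\Omega$ is nonempty, so this case can only produce a genuine interval on which $\tau$ is constant — and then by continuity of $\tau$ and the uniqueness theory, the accumulating structure collapses and $t_*$ cannot be an accumulation point of points of $\Omega' \setminus \Omega$ that are \emph{not} interior to $[0,L] \setminus \Omega$. So the delicate case is an infinite sequence of connected components $(a_j, b_j)$ of $\Omega$ with $b_{j+1} \le a_j$ and $a_j, b_j \to t_*$.

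In that case, by Lemma \ref{lem:Omega} the restriction of $\tau$ to each $(a_j, b_j)$ follows a great circle through $\lambda$ at constant speed $k$, and there is a point $t_j \in (a_j, b_j)$ with $\tau(t_j) = \pm\lambda$. Since $\tau$ is continuous on $[0,L]$ and $\tau(t_j) = \pm\lambda$ with $t_j \to t_*$, we get $\tau(t_*) = \pm\lambda$, contradicting $t_* \in \Omega'$. This is the crux: I expect the main work (and the main obstacle) to be ruling out the mixed scenario where the accumulation comes from intervals on which $f \equiv 0$ interleaved with ever-shorter components of $\Omega$, and in particular showing that components of $\Omega$ accumulating at $t_*$ force $\tau(t_*) = \pm \lambda$ rather than something else. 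The key structural input is exactly Lemma \ref{lem:Omega} (each component of $\Omega$ contains a point mapping to $\pm\lambda$) together with the continuity of $\tau$ on the closed interval $[0,L]$.

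Concretely, I would organise the proof as follows. (1) Observe that $\Omega' \setminus \Omega = \{t : f(t) = 0,\ \tau(t) \neq \pm\lambda\}$, and that it is relatively closed in $\Omega'$. (2) Suppose for contradiction $t_* \in \Omega' \setminus \Omega$ is an accumulation point of $\Omega' \setminus \Omega$; by relative closedness $t_* \in \Omega'\setminus\Omega$ itself, so $\tau(t_*) \neq \pm\lambda$, hence $\tau \neq \pm\lambda$ on a neighbourhood $(t_* - \epsilon, t_* + \epsilon)$. (3) On this neighbourhood, on the open set $\Omega$ we have $\tau', \tau, \lambda$ linearly dependent with $\tau \neq \pm\lambda$, so $\tau'$ lies in $\mathrm{span}\{\tau, \lambda\}$; combined with $\tau \cdot \tau' = 0$ and $|\tau'| = k$, this pins $\tau'$ down up to sign, and a short argument (using continuity of $\tau'$ on $\Omega$ and uniqueness) shows $\Omega \cap (t_* - \epsilon, t_* + \epsilon)$ has only finitely many components meeting any compact subinterval unless $\tau(t_*) = \pm\lambda$. (4) On the complementary set where $f \equiv 0$, \eqref{eqn:ODE_rescaled2} gives $\lambda \cdot \tau' = 0$ a.e., and with the linear dependence this yields $\tau' = 0$ a.e. there, so $\tau$ is locally constant off $\Omega$ near $t_*$ — which, together with $\tau$ being continuous and $\tau(t_*) \neq \pm \lambda$, is incompatible with $t_*$ being an accumulation point. (5) Conclude: the only remaining possibility is an infinite sequence of components of $\Omega$ shrinking to $t_*$, and Lemma \ref{lem:Omega} plus continuity of $\tau$ then forces $\tau(t_*) = \pm\lambda$, a contradiction. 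Hence $\Omega' \setminus \Omega$ has no accumulation points and is discrete.
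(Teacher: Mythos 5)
Your plan correctly isolates one half of the argument: if infinitely many full connected components of $\Omega$ accumulate at a point $t_*$ with $\tau(t_*)\neq\pm\lambda$, then Lemma \ref{lem:Omega} supplies points $t_j\to t_*$ with $\tau(t_j)=\pm\lambda$, and continuity of $\tau$ gives a contradiction. That is a sound substitute for the paper's observation that no full component of $\Omega$ can be contained in a neighbourhood on which $\tau\neq\pm\lambda$. The gap is in the complementary case, your steps (3)--(4): you never rule out that $f\equiv 0$ on an entire interval adjacent to (or containing) $t_*$ on which $\tau\neq\pm\lambda$. Such an interval would lie wholly in $\Omega'\setminus\Omega$ and destroy discreteness, yet your argument for excluding it does not close. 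First, the standing hypothesis that $\tau$, $\tau'$, $\lambda$ are linearly dependent is only assumed \emph{on} $\Omega$; off $\Omega$ it has to be extracted from the weak form of \eqref{eqn:ODE_rescaled1}, which in fact gives the stronger statement $\proj_{\tau,\tau'}^\perp(\lambda)=0$, i.e.\ $\lambda\in\operatorname{span}(\tau,\tau')$ a.e.\ on $\{f=0\}$. Second, even granting your conclusion that $\tau'=0$ a.e.\ there, so that $\tau$ is constant $\neq\pm\lambda$ on the interval, the asserted ``incompatibility with $t_*$ being an accumulation point'' simply does not follow --- on the contrary, that configuration is exactly the bad one, and your step (4) would leave it standing.

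The missing ingredient is the paper's first move: on $\{f=0\}$ one has $f'=0$ a.e., so \eqref{eqn:ODE_rescaled2} gives $\lambda\cdot\tau'=0$ a.e.\ there, while \eqref{eqn:ODE_rescaled1} gives $\lambda\in\operatorname{span}(\tau,\tau')$ a.e.\ there; together these force $\tau=\pm\lambda$ a.e.\ on $[0,L]\setminus\Omega$ (if $\tau'\neq 0$ then $\lambda\cdot\tau'=0$ leaves $\lambda\in\operatorname{span}(\tau)$, and if $\tau'=0$ the same conclusion is immediate). Hence $\Omega'\setminus\Omega$ is a null set, $\Omega'\setminus\overline{\Omega}$ is an open null set and therefore empty, so $\Omega$ is \emph{dense} in $\Omega'$. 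Only with this density in hand does the ``no interior component of $\Omega$'' observation pin down $J\cap\Omega=J\setminus\{t_0\}$ for a small neighbourhood $J$ of $t_0\in\Omega'\setminus\Omega$, which is what makes $t_0$ isolated. If you insert this measure-theoretic step in place of your step (4), your case analysis becomes complete and essentially reproduces the paper's proof.
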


\begin{proof}
As $f = 0$ in $[0, L] \setminus \Omega$, we know that $f' = 0$ almost everywhere in this set.
Using \eqref{eqn:ODE_rescaled2}, we conclude that $\tau' \cdot \lambda = 0$ almost everywhere,
and \eqref{eqn:ODE_rescaled1} implies that $\lambda$ is in the subspace spanned by $\tau$ and $\tau'$
almost everywhere in $[0, L] \setminus \Omega$. Hence $\tau = \pm \lambda$ almost
everywhere in $[0, L] \setminus \Omega$.
It follows that $\Omega' \setminus \Omega$ is a null set, and so is $\Omega' \setminus \overline{\Omega}$.
As the latter is an open set, it must be empty.
So $\Omega' \subseteq \overline{\Omega}$.

For any $t_0 \in \Omega' \setminus \Omega$,
we may choose $\epsilon > 0$ such that $\tau \neq \pm \lambda$ in $(t_0 - \epsilon, t_0 + \epsilon) \cap [0, L]$
by the continuity of $\tau$. Let $J = (t_0 - \epsilon, t_0 + \epsilon) \cap (0, L)$. Then
$J$ cannot contain any connected components of $\Omega$
by Lemma \ref{lem:Omega}. Therefore, the open set $J \cap \Omega$ consists of at most two intervals
extending to one of the end points of $J$. But we know that $J \subseteq \overline{\Omega}$.
Hence $J \cap \Omega = J \setminus \{t_0\}$.
We conclude that $t_0$ is an isolated point of $\Omega' \setminus \Omega$.
That is, the set $\Omega' \setminus \Omega$ is discrete.
\end{proof}

\begin{lemma} \label{lem:Omega'}
If $I$ is any connected component
of $\Omega'$, then the restriction of $\tau$ to $I$ takes values in a
great circle on $S^{n - 1}$ through $\lambda$.
\end{lemma}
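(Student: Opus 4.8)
The plan is to prove the claim by first establishing it on a dense open subset of $I$ and then passing to the closure. Recall that $\Omega \subseteq \Omega'$ and that, by Lemma~\ref{lem:discrete} and its proof, $\Omega' \subseteq \overline{\Omega}$ while $\Omega' \setminus \Omega$ is discrete. From this I would record three structural observations about the interval $I$. First, every connected component of $\Omega$ is either contained in $I$ or disjoint from $I$, since such a component is connected and contained in $\Omega \subseteq \Omega'$, hence in a single component of $\Omega'$. Second, $I \cap (\Omega' \setminus \Omega)$ is a closed discrete subset of $I$: no point of $I$ can be an accumulation point of it, because such a point, lying in $\Omega' = \Omega \cup \{t : \tau(t) \neq \pm\lambda\}$, would be either an interior point of the open set $\Omega$ (which contains a neighbourhood meeting $\Omega' \setminus \Omega$ nowhere) or an isolated point of $\Omega' \setminus \Omega$. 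Third, $\Omega \cap I$ is dense in $I$, since $I \subseteq \overline{\Omega}$ and $I$ is relatively open. Consequently $\Omega \cap I$ is the interval $I$ with a discrete set of points removed, and its connected components are open subintervals $A_j$ of $I$.

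By Lemma~\ref{lem:Omega}, for each $j$ the restriction of $\tau$ to $A_j$ follows a great circle $C_j \subseteq S^{n-1}$ through $\lambda$; write $C_j = S^{n-1} \cap P_j$ with $P_j$ a two-dimensional subspace containing $\lambda$. The heart of the argument is to show that all the $C_j$ coincide. The key point is that two adjacent components, say $A_j = (\alpha, t_0)$ and $A_{j'} = (t_0, \beta)$, share an endpoint $t_0$ which lies in $I$ but not in $\Omega$, hence in $I \cap (\Omega' \setminus \Omega)$, so $\tau(t_0) \neq \pm\lambda$ by the definition of $\Omega'$. Since great circles are closed subsets of $S^{n-1}$, continuity of $\tau$ gives $\tau(t_0) \in C_j \cap C_{j'} \subseteq P_j \cap P_{j'}$; as $\lambda \in S^{n-1}$, the pair $\{\lambda, \tau(t_0)\}$ is then linearly independent and contained in both two-dimensional spaces, forcing $P_j = P_{j'}$ and hence $C_j = C_{j'}$. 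To upgrade this to arbitrary components I would observe that between any two components of $\Omega \cap I$ there are only finitely many others: they are separated by points of $I \cap (\Omega' \setminus \Omega)$, and this closed discrete set has only finitely many points in the compact subinterval of $I$ spanned by the two given components; so consecutive separating components actually share endpoints, and any two components are joined by a finite chain of pairwise adjacent ones, along which the great circle is constant. Therefore there is a single great circle $C$ through $\lambda$ with $\tau(\Omega \cap I) \subseteq C$.

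Finally, since $\tau$ is continuous, $C$ is closed in $S^{n-1}$, and $\Omega \cap I$ is dense in $I$, it follows that $\tau(I) \subseteq C$, which is the assertion.

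The step I expect to require the most care is handling the points where $\tau = \pm\lambda$: at such a point the identity of the great circle is not pinned down by the value of $\tau$ alone, so a naive "open and closed" argument applied directly to $\{t \in I : \tau(t) \in C\}$ stalls there. The remedy built into the plan is that such points never occur at the junctions between components of $\Omega \cap I$ (junctions lie in $\Omega' \setminus \Omega \subseteq \Omega'$, where $\tau \neq \pm\lambda$), while within a single component of $\Omega$ any crossing of $\pm\lambda$ is already absorbed by Lemma~\ref{lem:Omega}. Thus the propagation of the great circle only ever needs to be verified across junctions, where the two circles through $\lambda$ are forced to agree. The only other mild subtlety is the finiteness of the connecting chain, which reduces to the discreteness of $I \cap (\Omega' \setminus \Omega)$ together with compactness of a closed subinterval of $I$.
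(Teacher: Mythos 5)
Your proof is correct and follows essentially the same route as the paper's: both reduce to examining a junction point $t_0 \in I \setminus \Omega$, observe that $\tau(t_0) \neq \pm\lambda$ there because $t_0 \in \Omega'$, and conclude that the great circles through $\lambda$ on either side must coincide. You merely spell out the chaining and density steps that the paper leaves implicit.
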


\begin{proof}
In view of Lemma \ref{lem:Omega} and Lemma \ref{lem:discrete}, it suffices to examine
what happens near a point $t_0 \in I \setminus \Omega$. There exists $\epsilon > 0$
such that the restriction of $\tau$ to $(t_0 - \epsilon, t_0)$ follows a great circle through
$\lambda$, and the same statement applies to $(t_0, t_0 + \epsilon)$. But as $t_0 \in I \subseteq \Omega'$
and $t_0 \not\in \Omega$, it is clear that $\tau(t_0) \neq \pm \lambda$. So we have the
same great circle on both sides of $t_0$, and the claim follows.
\end{proof}

We can now improve Lemma \ref{lem:discrete}. This is the only place in the paper where
we use the assumption that $\alpha$ is of bounded variation rather than just bounded.

\begin{lemma} \label{lem:finite}
If $I \subseteq \Omega'$ is a connected component of $\Omega'$, then $I \setminus \Omega$ is finite.
\end{lemma}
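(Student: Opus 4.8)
The plan is to argue by contradiction and feed the outcome into Lemma~\ref{lem:series}, taking as the sequence $(b_i)$ a measure of how far $\tau$ stands from $\lambda$ at the consecutive points of $I\setminus\Omega$.

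First I would set up the sequence. Assume $I\setminus\Omega$ is infinite. By Lemma~\ref{lem:discrete} it is discrete, so, being a subset of the interval $I$ whose closure is compact, it has exactly one accumulation point $T$; moreover $T\notin I$, since $T$ cannot lie in the open set $\Omega$ and cannot be an isolated point of $I\setminus\Omega$. Thus $T$ is an endpoint of $I$, and (relabelling in order of approach to $T$, which we may assume yields an increasing sequence) the points of $I\setminus\Omega$ near $T$ form a sequence $t_1<t_2<\cdots\to T$ exhausting $I\setminus\Omega$ above $t_1$. For each $i$, the interval $(t_i,t_{i+1})$ lies in $I\subseteq\Omega'$, contains no point of $I\setminus\Omega$ (the $t_j$ are consecutive), hence lies in $\Omega$, and $f(t_i)=f(t_{i+1})=0$; so $(t_i,t_{i+1})$ is a connected component of $\Omega$ and Lemma~\ref{lem:Omega} applies to it. Since these intervals are pairwise disjoint in $[0,L]$, their lengths $\ell_i:=t_{i+1}-t_i$ satisfy $\sum_i\ell_i\le L$, so $\ell_i\to0$.

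Next I would describe $\tau$ on each component angularly. On $(t_i,t_{i+1})$ Lemma~\ref{lem:Omega} gives $\tau(t)=\cos\theta(t)\,\lambda+\sin\theta(t)\,\nu_i$ with $\nu_i\perp\lambda$ a unit vector and $\theta$ affine, $|\theta'|=k$; after possibly replacing $(\theta,\nu_i)$ by $(-\theta,-\nu_i)$, take $\theta'=k$. Equation \eqref{eqn:ODE_rescaled2} gives $f'=\beta\,\lambda\cdot\tau'=-k\beta\sin\theta$. Because $f(t_i)=0$, $f>0$ just to the right of $t_i$, and $\sin\theta(t_i)\ne0$ (as $t_i\in\Omega'$ forces $\tau(t_i)\ne\pm\lambda$), we get $\sin\theta(t_i)<0$; since $\ell_i\to0$ the short arc swept by $\theta$ on $[t_i,t_{i+1}]$ must meet $\{\tau=\pm\lambda\}$, i.e.\ a multiple of $\pi$, so after shifting $\theta$ by a multiple of $2\pi$ we have $\theta(t_i)=-\alpha_i$, $\theta(t_{i+1})=\omega_i$ with $\alpha_i,\omega_i\in(0,\pi)$ and $\tau=\lambda$ where $\theta=0$. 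Hence $\ell_i=(\alpha_i+\omega_i)/k$, so $\sum_i\alpha_i<\infty$, and continuity of $\tau$ at $t_{i+1}$ (matching the $\lambda$-components of the two great-circle descriptions) forces $\cos\omega_i=\cos\alpha_{i+1}$, i.e.\ $\alpha_{i+1}=\omega_i$. Passing to the angular variable, the condition $f(t_{i+1})=0$ becomes
\[
\int_{-\alpha_i}^{\omega_i}\hat\beta_i(\theta)\sin\theta\,d\theta=0,
\]
where $\hat\beta_i$ is $\beta$ written in the variable $\theta$ on $J_i:=[t_i,t_{i+1}]$; note $\hat\beta_i$ has the same oscillation as $\beta$ on $J_i$, and $\beta\ge c_0>0$ because $1/\alpha$ is bounded.

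Finally I would run the quantitative estimate. Replacing $\hat\beta_i$ by its value $\beta_i^{\ast}$ at the point of $J_i$ where $\tau=\lambda$ costs at most $o_i:=\operatorname{osc}_{J_i}\beta$ times $\int_{-\alpha_i}^{\omega_i}|\sin\theta|\,d\theta=(1-\cos\alpha_i)+(1-\cos\omega_i)$, so
\[
c_0\,|\cos\alpha_i-\cos\omega_i|\le\beta_i^{\ast}\,|\cos\alpha_i-\cos\omega_i|\le o_i\bigl((1-\cos\alpha_i)+(1-\cos\omega_i)\bigr).
\]
Using $|\cos\alpha_i-\cos\omega_i|\ge\frac{2}{\pi^2}(\alpha_i+\omega_i)|\alpha_i-\omega_i|$ and $(1-\cos\alpha_i)+(1-\cos\omega_i)\le\frac12(\alpha_i+\omega_i)^2$ (valid once $\alpha_i,\omega_i$ are small), this gives $|\alpha_i-\omega_i|\le C o_i(\alpha_i+\omega_i)$; dividing by $\alpha_i$ and absorbing (legitimate since $o_i\to0$) yields $|1-\alpha_{i+1}/\alpha_i|=|1-\omega_i/\alpha_i|\le C'o_i$ for all large $i$. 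The intervals $J_i$ are non-overlapping and $\beta=\alpha\circ\phi$ has the same total variation as $\alpha$, so $\sum_i o_i\le\sum_i\operatorname{Var}_{J_i}\beta\le\operatorname{Var}_{[0,L]}\beta=\operatorname{Var}_{[0,\ell]}\alpha<\infty$ --- this is exactly where the bounded variation of $\alpha$ (not merely its boundedness) is used. Hence $\sum_i|1-\alpha_{i+1}/\alpha_i|<\infty$ while $\sum_i\alpha_i<\infty$, contradicting Lemma~\ref{lem:series}; so $I\setminus\Omega$ is finite. I expect the main obstacle to be the angular bookkeeping in the third step --- identifying the alternating great-circle arcs meeting at the points of $I\setminus\Omega$, fixing orientations consistently, and extracting both the recursion $\alpha_{i+1}=\omega_i$ and the length identity $\ell_i=(\alpha_i+\omega_i)/k$ --- together with arranging the variation estimate in precisely the form $\sum|1-b_{i+1}/b_i|<\infty$ that Lemma~\ref{lem:series} consumes.
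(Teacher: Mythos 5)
Your argument is correct and follows essentially the same route as the paper's proof: assume infinitely many zeros of $f$ accumulating at an endpoint of $I$, use Lemma~\ref{lem:Omega} to describe $\tau$ as great-circle arcs crossing $\lambda$ symmetrically (your $\alpha_{i+1}=\omega_i$ is the paper's $\rho_{i+1}-t_{i+1}=t_{i+1}-\rho_i$), control $\sum\lvert 1-\alpha_{i+1}/\alpha_i\rvert$ by the total variation of $\alpha$ via the vanishing of $\int f'$ over each component, and contradict Lemma~\ref{lem:series} against $\sum(\alpha_i+\omega_i)\le kL$. The only differences are cosmetic: you feed the angles $\alpha_i$ into Lemma~\ref{lem:series} and use an oscillation bound, where the paper uses $b_i=1-\cos(k(\rho_i-t_i))$ and an intermediate-value choice of evaluation points.
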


\begin{proof}
We argue by contradiction here, so we assume that $I \setminus \Omega$ is \emph{not} finite.
Then by Lemma \ref{lem:discrete}, either $\inf I$ or $\sup I$ is an accumulation point of $I \setminus \Omega$, and
we assume for simplicity that this is true for $\sup I$. (The arguments are similar if
it is $\inf I$.) Then there is a sequence $(t_i)_{i \in \N}$ in $I \setminus \Omega$
such that $t_{i + 1} > t_i$ and $(t_i, t_{i + 1}) \subseteq \Omega$ for all $i \in \N$.
So $f(t_i) = 0$ for all $i \in \N$. By Lemma \ref{lem:Omega}, we know that $\tau$
follows a great circle through $\lambda$ with speed $k$ in the interval $(t_i, t_{i + 1})$ and
there exists a point $\rho_i \in (t_i, t_{i + 1})$
such that $\tau(\rho_i) = \pm \lambda$ for every $i \in \N$. If $\tau(\rho_i) = \lambda$
and $\tau(\rho_{i + 1}) = - \lambda$ or vice versa, then $\rho_{i + 1} - \rho_i \ge \pi/k$;
so this can happen at most a finite number of times. Dropping finitely many members of the
sequence, we may assume that $\rho_{i + 1} - \rho_i < \pi/k$ for every $i$; then $\tau(\rho_i)$ has always the same
sign and for simplicity we
assume that $\tau(\rho_i) = \lambda$ for every $i \in \N$. Then
\[
\lambda \cdot \tau(t) = \cos(k(t -\rho_i))
\]
in $(t_i, t_{i + 1})$ for all $i \in \N$.

It follows immediately that $\rho_{i + 1} - t_{i + 1} = t_{i + 1} - \rho_i$ for every $i \in \N$.
Furthermore, equation \eqref{eqn:ODE_rescaled2} implies that
\[
0 = \int_{t_i}^{t_{i + 1}} f'(t) \, dt = - k \int_{t_i}^{t_{i + 1}} \beta(t) \sin(k(t -\rho_i)) \, dt.
\]
Hence
\[
k \int_{t_i}^{\rho_i} \beta(t) \left|\sin(k(t -\rho_i))\right| \, dt = k \int_{\rho_i}^{t_{i + 1}} \beta(t) \left|\sin(k(t -\rho_i))\right| \, dt.
\]
Define
\[
b_i = k \int_{t_i}^{\rho_i}  \left|\sin(k(t -\rho_i))\right| \, dt = 1 - \cos(k(\rho_i - t_i))
\]
and
\[
b_i' = k \int_{\rho_i}^{t_{i + 1}} \left|\sin(k(t -\rho_i))\right| \, dt = 1 - \cos(k(t_{i + 1} - \rho_i)).
\]
If $b_i' \le b_i$, then we may choose $\omega_i \in [t_i, \rho_i]$ and $\omega_i' \in [\rho_i, t_{i + 1}]$ such
that
\[
k \int_{t_i}^{\rho_i} \beta(t) \left|\sin(k(t -\rho_i))\right| \, dt \ge b_i \beta(\omega_i)
\]
and
\[
k \int_{\rho_i}^{t_{i + 1}} \beta(t) \left|\sin(k(t -\rho_i))\right| \, dt \le b_i' \beta(\omega_i');
\]
then
\[
\frac{b_i'}{b_i} \ge \frac{\beta(\omega_i)}{\beta(\omega_i')}.
\]
If $b_i < b_i'$, then instead we choose $\omega_i \in [t_i, \rho_i]$ and $\omega_i' \in [\rho_i, t_{i + 1}]$ such
that
\[
k \int_{t_i}^{\rho_i} \beta(t) \left|\sin(k(t -\rho_i))\right| \, dt \le b_i \beta(\omega_i)
\]
and
\[
k \int_{\rho_i}^{t_{i + 1}} \beta(t) \left|\sin(k(t -\rho_i))\right| \, dt \ge b_i' \beta(\omega_i');
\]
then
\[
\frac{b_i'}{b_i} \le \frac{\beta(\omega_i)}{\beta(\omega_i')}.
\]
In both cases,
\[
\left|1 - \frac{b_i'}{b_i}\right| \le \left|1 - \frac{\beta(\omega_i)}{\beta(\omega_i')}\right| = \frac{|\beta(\omega_i') - \beta(\omega_i)|}{|\beta(\omega_i')|} \le |\beta(\omega_i') - \beta(\omega_i)| \, \sup_{[0, \ell]} \frac{1}{\alpha}.
\]
Hence
\[
\sum_{i = 1}^\infty \left|1 - \frac{b_i'}{b_i}\right| \le \sup\biggl\{\sum_{j = 1}^J |\alpha(s_j) - \alpha(s_{j - 1})| \colon 0 \le s_0 \le \dotsb \le s_J \le \ell\biggr\} \, \sup_{[0, \ell]} \frac{1}{\alpha}.
\]
The right-hand side is finite, because $\alpha$ is assumed to be of bounded variation and $1/\alpha$ is bounded.

We have already seen that $t_{i + 1} - \rho_i = \rho_{i + 1} - t_{i + 1}$ for every $i \in \N$.
This means that $b_i' = b_{i + 1}$. We now apply Lemma \ref{lem:series} to the sequence
$(b_1, b_1', b_2, b_2', \dotsc)$. We infer that
\begin{equation} \label{eqn:divergent_sum}
\sum_{i = 1}^\infty (b_i + b_i') = \infty.
\end{equation}
But clearly
\[
\sum_{i = 1}^\infty (\rho_i - t_i) + \sum_{i = 1}^\infty (t_{i + 1} - \rho_i)  \le L,
\]
as this is the sum of the lengths of pairwise disjoint intervals in $(0, L)$.
Hence there exists $i_0 \in \N$ such that
\[
\rho_i - t_i \le \frac{2}{k^2} \quad \text{and} \quad t_{i + 1} - \rho_i \le \frac{2}{k^2}
\]
for all $i \ge i_0$, which implies that
\[
b_i = 1 - \cos(k(\rho_i - t_i)) \le \rho_i - t_i
\]
and
\[
b_i' = 1 - \cos(k(t_{i + 1} - \rho_i)) \le t_{i + 1} - \rho_i.
\]
Now we have a contradiction to \eqref{eqn:divergent_sum}.
\end{proof}

\begin{lemma} \label{lem:finite2}
The set $\Omega'$ has finitely many connected components.
\end{lemma}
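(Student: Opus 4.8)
The plan is to argue by contradiction. Suppose $\Omega'$ has infinitely many connected components. At most two of them contain $0$ or $L$; call the remaining ones \emph{interior}. I will show that every interior component $I$ satisfies $\int_I |\tau'| \, dt \ge 2$. Since the closures of the interior components overlap only at their endpoints and $\esssup_{(0,L)} |\tau'| = K_\infty(\tau) = k$, this gives $2 \cdot (\text{number of interior components}) \le \int_0^L |\tau'| \, dt \le kL$, so there are only finitely many of them, hence only finitely many components in all --- a contradiction.

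Fix an interior component $I = (a, b)$. Since $a, b \notin I$ and $\Omega' = \{\tau \neq \pm\lambda\} \cup \Omega$, we have $\tau(a), \tau(b) \in \{\pm\lambda\}$ and $f(a) = f(b) = 0$. The first step is to show $\tau(a) = -\lambda$, and symmetrically $\tau(b) = -\lambda$. By Lemma~\ref{lem:finite} the set $I \setminus \Omega$ is finite, so $(a, a + \epsilon) \subseteq \Omega$ for some $\epsilon > 0$; by Lemma~\ref{lem:Omega'} (or Lemma~\ref{lem:Omega}), on this interval $\tau$ traces a great circle through $\lambda$ with speed $k$, so we may write $\tau = (\cos\theta)\lambda + (\sin\theta)\nu$ there, with $\nu$ a fixed unit vector orthogonal to $\lambda$ and $\theta$ affine with $\theta' = \sigma k$ for some $\sigma \in \{-1, 1\}$; normalise the branch of $\theta$ so that $\theta(a^+) \in \{0, \pi\}$. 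Then \eqref{eqn:ODE_rescaled2} reads $f' = -\beta \theta' \sin\theta$ almost everywhere. If $\theta(a^+) = 0$, then a direct computation gives $f'(t) = -\beta(t)\, k \sin(k(t - a))$, which is negative for $t$ slightly larger than $a$ (recall that $\beta$ is bounded below by a positive constant), so that $f(t) = \int_a^t f' < 0$ there, contradicting $f \ge 0$. Hence $\theta(a^+) = \pi$, that is, $\tau(a) = -\lambda$; the same argument at $b$ yields $\tau(b) = -\lambda$.

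Next I claim that $\tau$ takes the value $\lambda$ at some point $t^* \in (a, b)$; granting this, $\int_I |\tau'| \, dt \ge \int_a^{t^*} |(\lambda \cdot \tau)'| \, dt \ge |\lambda \cdot \tau(t^*) - \lambda \cdot \tau(a)| = 2$, which is what we need. To prove the claim, let $c$ be the smallest element of $(I \setminus \Omega) \cup \{b\}$, so that $(a, c) \subseteq \Omega$; as above $\theta$ is affine on $(a, c)$ with $\theta(a^+) = \pi$ and $\theta' = \sigma k$, hence $\theta(c) = \pi + \sigma \alpha$ with $\alpha := k(c - a) > 0$. If $c = b$, then $I \setminus \Omega = \emptyset$ and $\tau(c) = \tau(b) = -\lambda$ forces $\alpha$ to be a positive multiple of $2\pi$, so $\alpha \ge 2\pi$. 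If $c < b$, then $c \in I \setminus \Omega$, so $f(c) = 0$ and $\tau(c) \neq \pm\lambda$, and by Lemma~\ref{lem:finite} the point $c$ is isolated in $I \setminus \Omega$, so small left and right neighbourhoods of $c$ lie in $\Omega$, where $f > 0$; dividing $\int_t^c f' = -f(t) < 0$ and $\int_c^t f' = f(t) > 0$ by $c - t$ and $t - c$ and letting $t \to c$, and using that $\beta$ stays above a positive constant (so that $f'$ has one-sided limits at $c$ even though $\beta$ need not be continuous), we find that the slope of $\theta$ must flip at $c$ and that $\sigma \sin\theta(c) > 0$; since $\sigma \sin\theta(c) = \sigma\sin(\pi + \sigma\alpha) = -\sin\alpha$, this gives $\sin\alpha < 0$ and hence $\alpha > \pi$. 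In either case $\theta$ runs monotonically from $\pi$ over an interval of length $\alpha > \pi$, so by the intermediate value theorem $\theta$ equals $0$ or $2\pi$ at some $t^* \in (a, c) \subseteq (a, b)$, and there $\tau = \lambda$. This proves the claim, and the contradiction follows as explained.

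The main obstacle is the endpoint-and-reversal analysis in the last two paragraphs: extracting the required sign information on $\theta$ from the single constraint $f \ge 0$ at the points where $f$ vanishes, while bearing in mind that $\beta$ is only of bounded variation (so that $f'$ need not be continuous, only admit one-sided limits), and in particular verifying that the slope of $\theta$ is \emph{forced} to reverse at the points of $I \setminus \Omega$ rather than continue --- this is exactly what produces the lower bound $\alpha > \pi$, and hence the quantitative estimate $\int_I |\tau'| \, dt \ge 2$ that drives the counting argument.
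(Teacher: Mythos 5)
Your proof is correct, and it follows the same basic strategy as the paper: establish a uniform lower bound on the ``size'' of each interior connected component of $\Omega'$ and then count. The paper gets this bound much more cheaply, though. It takes the first connected component $(t_1,t_3)$ of $\Omega$ inside $I=(t_1,t_2)$, notes that $\tau(t_1)=\pm\lambda$ and that Lemma~\ref{lem:Omega} already supplies an interior point $t_4\in(t_1,t_3)$ with $\tau(t_4)=\pm\lambda$; since $\tau$ traverses a great circle at speed $k$ on $(t_1,t_4)$ and both endpoints lie in the antipodal pair $\{\pm\lambda\}$, one gets $t_4-t_1\ge\pi/k$ immediately, with no need to determine which of $\pm\lambda$ occurs at the endpoints. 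Your additional work --- using $f\ge 0$ and \eqref{eqn:ODE_rescaled2} to force $\tau=-\lambda$ at the endpoints of $I$, and the slope-reversal analysis at points of $I\setminus\Omega$ to force $\tau$ to reach $+\lambda$ inside --- is sound (and essentially reproves the sign conditions of Lemma~\ref{lem:2D}), yielding the slightly different bound $\int_I|\tau'|\,dt\ge 2$, i.e.\ $|I|\ge 2/k$; but it is not needed for the finiteness conclusion.
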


\begin{proof}
We can ignore any connected components of the form $[0, t_2)$ or $(t_1, L]$. Thus we fix
another connected component $I = (t_1, t_2)$.
Then $f(t_1) = 0$ and $\tau(t_1) = \pm \lambda$,
and also $f(t_2) = 0$ and $\tau(t_2) = \pm \lambda$. Furthermore, by Lemma \ref{lem:finite},
there exists $t_3 \in (t_1, t_2]$ such that $f(t_3) = 0$ and $(t_1, t_3) \subseteq \Omega$.
According to Lemma \ref{lem:Omega}, this implies that there exists $t_4 \in (t_1, t_3)$
with $\tau(t_4) = \pm \lambda$. We further know that $\tau$ follows a great circle with speed $k$ in
$(t_1, t_4)$, and therefore $t_4 - t_1 \ge \pi/k$. So there can only
be finitely many connected components.
\end{proof}

Now we can complete the proof of Theorem \ref{thm:structure} as follows.

By Lemma \ref{lem:finite2}, we can partition $\Omega'$ into finitely many connected components $I_1, \dotsc, I_M$.
Let $t_i = \inf I_i$ and $t_i' = \sup I_i$ for $i = 1, \dotsc, M$.
Setting $A = [0, L] \setminus \bigcup_{i = 1}^M I_i$, we observe that $f = 0$ and
$\tau = \pm \lambda$ on $A$.

The set $\tau(\overline{I}_i)$ is contained in a two-dimensional subspace $X_i \subseteq \R^n$
with $\lambda \in X_i$
for every $i = 1, \dotsc, M$ by Lemma \ref{lem:Omega'}.
Hence Lemma~\ref{lem:2D} may be applied to the restriction of $\tau$ to $\overline{I}_i$.
Consequently, there exists a line $\mathcal{L}_i \subseteq X_i + c(t_i)$
for every $i = 1, \dotsc, M$ such that $\set{t \in \overline{I}_i}{f(t) = 0} = \set{t \in \overline{I}_i}{c(t) \in \mathcal{L}_i}$,
where $c = \gamma \circ \phi$. But we know that
$f(t_i) = 0$, except possibly for $i = 1$ if $t_1 = 0$, and that
$f(t_i') = 0$, except possibly for $i = M$ if $t_M' = L$.
Moreover, each $\mathcal{L}_i$ is parallel to $\lambda$. As $\tau = \pm \lambda$ on $A$, we
also conclude that $c([t_i', t_{i + 1}])$ is a line segment parallel
to $\lambda$ for $i = 1, \dotsc, M - 1$, and the same applies to $c([0, t_1])$ if $t_1 > 0$ and
to $c([t_M', L])$ if $t_M' < L$.
Hence the lines $\mathcal{L}_i$ all coincide with a single
line $\mathcal{L} \subseteq \R^n$ and $c(A) \subseteq \mathcal{L}$.

If there are any points $t \in I_i \setminus \Omega$, then we further subdivide $I_i$. According to Lemma \ref{lem:finite},
there are only finitely many such points.
Thus we obtain pairwise disjoint, relatively open intervals $I_1^*, \ldots, I_N^* \subseteq [0, L]$
such that $c(t) \not\in \mathcal{L}$ for all $t \in I_i^*$ for $i = 1, \dotsc, N$
but $c(t) \in \mathcal{L}$ for all $t \in [0, L] \setminus \bigcup_{i = 1}^N I_i^*$.
Lemma \ref{lem:2D} then further implies that $\tau'$ is continuous with $|\tau'| \equiv k$ in $I_i^*$,
and that there exists $\delta > 0$ such that for any $t_0 \in \overline{I_i^*} \setminus I_i^*$,
the inequality $\lambda \cdot \tau' > 0$ is satisfied in
$(t_0, t_0 + \delta) \cap I_i^*$ and $\lambda \cdot \tau' < 0$
in $(t_0 - \delta, t_0) \cap I_i^*$ for all $i = 1, \dotsc, N$.

Reversing the reparametrisation from Section \ref{sect:reparametrisation} and setting $J_i = \phi(I_i^*)$,
we therefore find the situation described in statement \ref{item:2D}
of Theorem \ref{thm:structure}.

Finally, we want to prove that every curve satisfying one of the conditions in Theorem \ref{thm:structure}
is indeed an $\infty$-elastica. This is clear if $\gamma([0, L])$ is contained in a line, so we assume
otherwise.

In the case of condition \ref{item:3D}, the claim follows immediately from Proposition \ref{prop:equivalence} and
Proposition \ref{prop:DE=>pseudo-minimiser}. If condition \ref{item:2D} is satisfied, we use
Lemma \ref{lem:2D} for any piece of $\gamma$ restricted to $\overline{J}_i$. In order to work with
the usual reparametrisation, we set $I_i = \psi(J_i)$ and let $t_i = \inf I_i$ and $t_i' = \sup I_i$.
Then Lemma \ref{lem:2D} gives rise to
$u_i \colon \overline{I}_i \to \R^n$ satisfying \eqref{eqn:system1}, \eqref{eqn:system2} in $I_i$ with
$u_i(t_i) = 0$ (unless $t_i = 0$) and $u_i(t_i') = 0$ (unless $t_i' = L$), but $u_i \neq 0$
in $I_i$. Hence we define
$u \colon [0, L] \to \R^n$ by
\[
u(t) = \begin{cases}
u_i(t) & \text{if } t \in I_i, \ i = 1, \dotsc, N, \\
0 & \text{else}.
\end{cases}
\]
Then \eqref{eqn:system1} and \eqref{eqn:system2} are satisfied almost
everywhere in $(0, L)$. Proposition \ref{prop:DE=>pseudo-minimiser}
now completes the proof.

\section{The Markov-Dubins problem} \label{sect:Dubins}

In this section, we first prove Proposition \ref{prop:shortest_curves}, thus establishing
the connection to the Markov-Dubins problem of minimising length subject to
curvature constraints. Then we show how to recover some of the main results of
Dubins \cite[Theorem I]{Dubins:57} and Sussmann \cite[Theorem 1]{Sussmann:95} from Theorem~\ref{thm:structure}.

\begin{proof}[Proof of Proposition \ref{prop:shortest_curves}]
Suppose that $\gamma \in \mathcal{G}$ does \emph{not} minimise $\K_1$ under the boundary conditions
\eqref{eqn:boundary_conditions}. We want to show that the curve parametrised by $\gamma$ is not an $R$-geodesic.
For $R > 1/\K_1(\gamma)$, this is obvious, as $\gamma$ does not satisfy the required curvature constraint.
Thus we assume that $R \le 1/\K_1(\gamma)$.

We may assume without loss of generality that
$a_1, a_2 \in \{0\}^{n - 1} \times \R$. In the following, we write $x = (x', x_n)$
for a generic point $x = (x_1, \dotsc, x_n) \in \R^n$, where $x' = (x_1, \dotsc, x_{n - 1})$.
Let $\epsilon > 0$ and consider the map $\Phi_\epsilon \colon \R^n \to \R^n$ defined by
\[
\Phi_\epsilon(x) = \left(\frac{x'}{1 + \epsilon |x'|^2}, x_n\right).
\]
This has the derivative $d\Phi_\epsilon(0, x_n) = \id_{\R^n}$ for any $x_n \in \R$.
We have the convergence $\Phi_\epsilon \to \id_{\R^n}$ in $C^2(C; \R^n)$
for any compact set $C \subseteq \R^n$ as $\epsilon \to 0$.
Moreover, for any $x, V \in \R^n$, unless $x' = 0$ or $V' = 0$, we find that $|d\Phi_\epsilon(x)V| < |V|$.
Now choose $\hat{\gamma} \in \mathcal{G}$ with $\K_1(\hat{\gamma}) < \K_1(\gamma)$.
Consider
$\hat{\gamma}_\epsilon = \Phi_\epsilon \circ \hat{\gamma}$ for some $\epsilon > 0$ that remains to be
determined. Then $\hat{\gamma}_\epsilon$ still satisfies the boundary conditions \eqref{eqn:boundary_conditions}.

As $\gamma$ does not minimise $\K_1$ by the above assumption,
we conclude that $\gamma([0, \ell]) \not\subseteq \{0\}^{n - 1} \times \R$.
Hence $|a_2 - a_1| < \ell$ and $\hat{\gamma}([0, \ell])$ is not contained in $\{0\}^{n - 1} \times \R$ either.
Therefore, the length of $\hat{\gamma}_\epsilon$ is strictly less than $\ell$.
But $\hat{\gamma}_\epsilon \to \hat{\gamma}$ in $C^2([0, \ell])$ as $\epsilon \to 0$. Hence
for some $\epsilon > 0$ small enough, we conclude that the curvature $\hat{\kappa}_\epsilon$ of
$\hat{\gamma}_\epsilon$ satisfies $\|\hat{\kappa}_\epsilon\|_{L^\infty(0, \ell)} \le \K_1(\gamma) \le 1/R$.
Hence we have found a shorter curve with the same boundary data satisfying the required curvature constraint.
\end{proof}

Now suppose that $n = 2$. We wish to give an alternative proof of Dubins's
main result \cite[Theorem I]{Dubins:57} based on Theorem \ref{thm:structure}.
Let $k > 0$ and consider a $1/k$-geodesic parametrised by
$\gamma \in \mathcal{G}$. Then Proposition \ref{prop:shortest_curves} and Theorem \ref{thm:structure}
imply that $\gamma$ is consistent with one of the descriptions \ref{item:arc-line-arc} or
\ref{item:arc-arc} in the introduction.

In the case \ref{item:arc-line-arc}, it is clear that any
minimiser of the length will not contain any full circles, so the curve will at most consist of a
circular arc, followed by a line segment, followed by another circular arc. This is one of the solutions described by Dubins.

In the case \ref{item:arc-arc}, we have a sequence of several circular arcs.
If there were more than four pieces, then it is also easy to see that a piece of
the curve could be replaced by a line segment, thus reducing the length. This is of
course impossible for a minimiser of the length, hence we have four or fewer pieces.
In order to see that four consecutive circular arcs are also impossible, we still need
Dubins's Lemma 2. Almost all of Dubins's other arguments, however, have been bypassed.

Sussmann's results for $n = 3$ \cite[Theorem 1]{Sussmann:95} follow in a similar way from Theorem \ref{thm:structure}
and again one of Dubins's lemmas. If we have a solution as in statement \ref{item:2D}, then we first
distinguish the following two cases. If the entire curve is planar, we apply the
above reasoning. (Sussmann's theorem contains another statement in this case, which is a consequence
of a result of Dubins \cite[Sublemma]{Dubins:57}.) Otherwise, we note that the curve must meet
the line $\mathcal{L}$ tangentially.
Then we may have a circular arc at either end of the curve and we may have some
intermediate pieces. But if one of these intermediate pieces is not a segment of $\mathcal{L}$,
it is clear that it must be a full circle. This clearly cannot happen for a solution of the
Markov-Dubins problem, so in fact we have (at most) a concatenation of a circular arc, a line,
and another circular arc. A solution as in statement \ref{item:3D}, on the other hand,
is a helicoidal arc in Sussmann's terminology.

\section{Examples} \label{sect:examples}

We finally examine a few examples of minimisers and $\infty$-elasticas,
which highlight some features and some limitations of the theory. Throughout
this section, we assume that $\alpha \equiv 1$.

\begin{example}[Circular arc] \label{ex:arc}
We first consider a circular arc parametrised by $\gamma \colon [0, \ell] \to \R^2$ with
$\gamma(s) = r(\cos(s/r), \sin(s/r))$ and with tangent vector $T(s) = (-\sin(s/r), \cos(s/r))$
and constant curvature $k = 1/r$. This is an $\infty$-elastica by Theorem \ref{thm:structure}.
If we want to check equations \eqref{eqn:ODE1} and \eqref{eqn:ODE2} directly, then we first compute $T'' + k^2 T = 0$. Moreover, the
vectors $T$ and $T'$ span $\R^2$ everywhere, so $\proj_{T, T'}^\perp(\lambda) = 0$
regardless of the value of $\lambda$. Thus we only need to consider equation
\eqref{eqn:ODE2}, which gives $g'(s) = -\frac{1}{r}(\lambda_1 \cos(s/r) + \lambda_2 \sin(s/r))$.
This is satisfied for $g(s) = \lambda_2 \cos(s/r) - \lambda_1 \sin(s/r) + h = \lambda \cdot T(s) + h$
for any $h \in \R$. Clearly we can choose $h$ such that $g \ge 0$ in $[0, \ell]$.

Now suppose that we wish to apply Theorem \ref{thm:minimiser}.
We have a minimiser of $\K_1$ if the inequalities 
$0 \le \lambda \cdot T + h \le  -\lambda \cdot T$
are satisfied simultaneously. They give rise to the conditions
\[
\frac{h}{2} \le \min_{[0, \ell]} (-\lambda \cdot T) \le \max_{[0, \ell]} (-\lambda \cdot T) \le h.
\]
It is possible to satisfy these if, and only if, $\ell \le 2\pi r/3$,
in which case we can choose $\lambda = (\sqrt{3}/2, -1/2)$ and $h = 1$.
Thus a circular arc of radius $r$ minimises $\K_1$ if its length does not exceed $2\pi r/3$.
\end{example}

The example shows that the condition of Theorem \ref{thm:minimiser} is sufficient but not necessary, for the
above circular arc is still a minimiser as long as $\ell \le 2\pi r$ by the results of
Schmidt \cite{Schmidt:25}.

Next we consider the question whether the notion of an $\infty$-elastica is genuinely
more general than that of a minimiser of $\K_\alpha$. The answer is yes, and the
following example gives a one-parameter family of $\infty$-elasticas that are not minimisers
and not even local minimisers with respect to the $W^{1, 2}$-topology.

\begin{example}[Non-minimising $\infty$-elastica] \label{ex:non-minimising}
Consider curves with end points $a_1 = (-1, 0)$ and $a_2 = (1, 0)$ and tangent vectors
$T_1 = (0, 1)$ and $T_2 = (0, -1)$. If $\ell = \pi$, then there is one candidate that
consists of three semicircles of radius $1/3$; this is illustrated in Figure \ref{fig:non-min}.
It is an $\infty$-elastica by Theorem \ref{thm:structure}.
\begin{figure}[h!tb]
\centering
\begin{subfigure}[t]{0.45\textwidth}
\includegraphics[width=\textwidth]{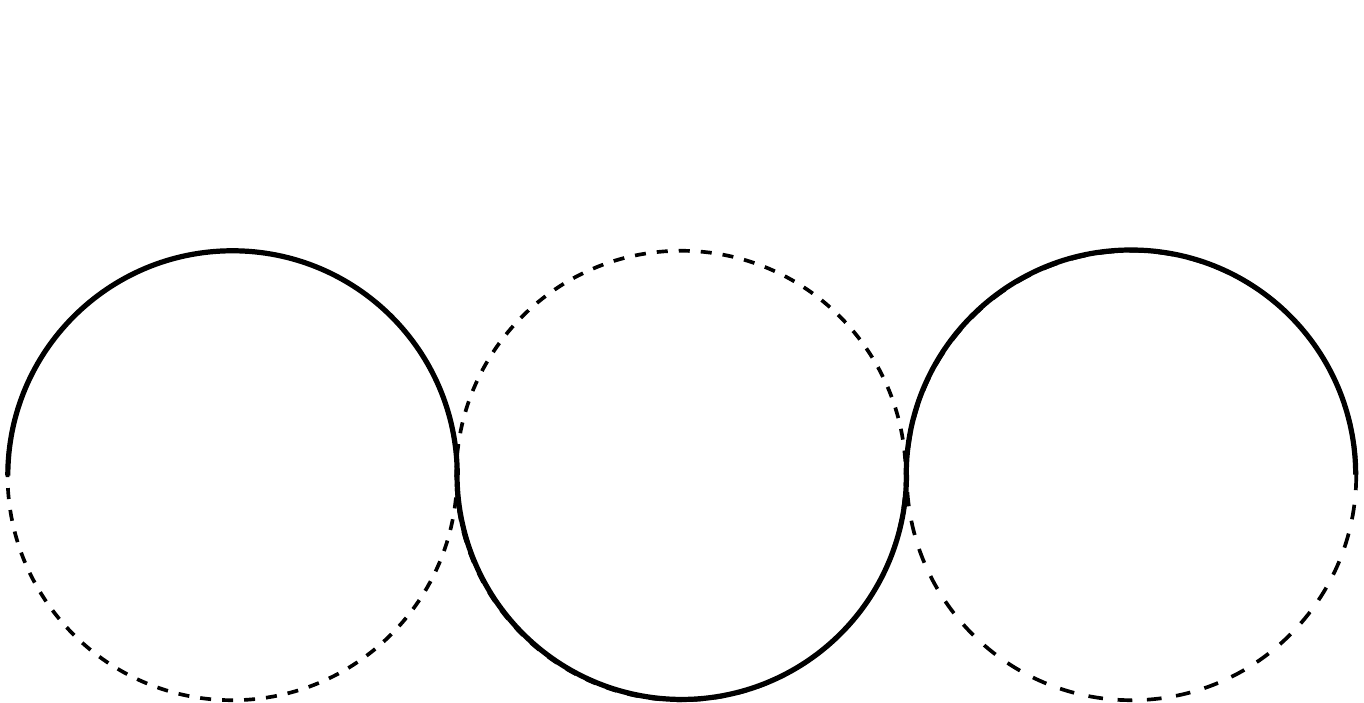}
\caption{The $\infty$-elastica}
\label{fig:non-min}
\end{subfigure}
\hfill
\begin{subfigure}[t]{0.45\textwidth}
\includegraphics[width=\textwidth]{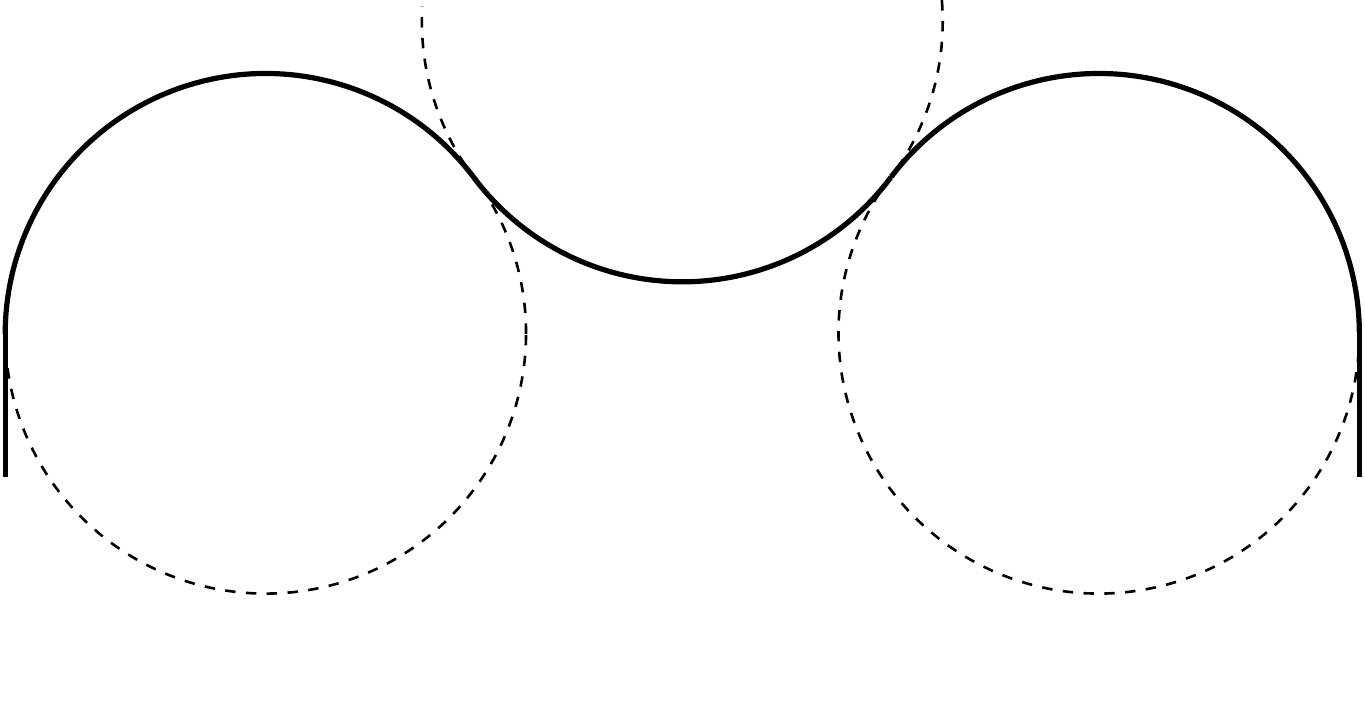}
\caption{A comparison curve}
\label{fig:comparison}
\end{subfigure}
\caption{Construction of an $\infty$-elastica that is not a minimiser}
\end{figure}

For $r \in [1/3, 1)$, we also construct some comparison curves including three circular arcs of radius $r$.
To this end, define $\omega(r) = \arccos((1 - r)/2r)$.
For $h \in \R$, there is a curve comprising three circular arcs of radius $r$, with centres
\[
(r - 1, h), \quad (0, h + 2r\sin \omega(r)), \quad (1 - r, h),
\]
that connects the points $(-1, h)$ and $(1, h)$. The length of this curve is $\tilde{\ell}(r) = r(3\pi - 4\omega(r))$.
We compute $\tilde{\ell}(1/3) = \pi = \tilde{\ell}(1)$ and
\[
\tilde{\ell}''(r) = \frac{4(1 - r)}{r(3r^2 + 2r - 1)^{3/2}} > 0
\]
in $(1/3, 1)$. Hence $\tilde{\ell}(r)< \pi$ for all $r \in (1/3, 1)$.
If we choose $h = (\pi - \tilde{\ell}(r))/2$, we can attach a line segment to each end
and thereby construct a comparison curve of length $\pi$ that satisfies the required boundary conditions
(see Figure \ref{fig:comparison}). But the value of $\K_1$ is $1/r < 3$.
\end{example}

Finally we have an example of a three-dimensional $\infty$-elastica,
showing that both cases in Theorem \ref{thm:structure} can indeed occur.

\begin{example}[Helical arc] \label{ex:helix}
Consider $\gamma \colon [0, \ell] \to \R^3$ given by
\[
\gamma(s) = (r\cos \omega \cos(s/r), r\cos \omega \sin(s/r), s\sin \omega)
\]
for some $\omega \in (0, \pi/2)$. The curvature of this curve is $k = r^{-1} \cos \omega$.
For $T = \gamma'$, we compute
\[
T'' + k^2 T = \frac{\sin \omega \cos \omega}{r^2}(\sin \omega \sin (s/r), - \sin \omega \cos(s/r), \cos \omega).
\]
Now let $\lambda = (0, 0, 1)$. Then $\lambda \cdot T = \sin \omega$. In order to find
$\proj_{T, T'}^\perp(\lambda)$, we first compute
\[
N = \frac{r}{\cos \omega} T \times T' = (\sin \omega \sin(s/r), -\sin \omega \cos(s/r), \cos \omega)
\]
and note that $N$ is a unit vector perpendicular to $T$ and $T'$. Hence
\[
\proj_{T, T'}(\lambda) = (\lambda \cdot N)N = \cos \omega (\sin \omega \sin(s/r), -\sin \omega \cos(s/r), \cos \omega).
\]
Choosing $\eta = \sin \omega - \cos \omega \cot \omega$, we see that equation \eqref{eqn:ODE_alpha=1}
is satisfied. Hence $\gamma$ is an $\infty$-elastica.
\end{example}

\def\cprime{$'$}
\providecommand{\bysame}{\leavevmode\hbox to3em{\hrulefill}\thinspace}
\providecommand{\MR}{\relax\ifhmode\unskip\space\fi MR }
\providecommand{\MRhref}[2]{%
  \href{http://www.ams.org/mathscinet-getitem?mr=#1}{#2}
}
\providecommand{\href}[2]{#2}


\begin{thebibliography}{10}

\bibitem{Aronsson:65}
G.~Aronsson, \emph{Minimization problems for the functional {${\rm
  sup}_{x}\,F(x,\,f(x),\,f^{\prime} (x))$}}, Ark. Mat. \textbf{6} (1965),
  33--53.

\bibitem{Aronsson:66}
\bysame, \emph{Minimization problems for the functional {${\rm sup}_{x}\, F(x,
  f(x),f\sp\prime (x))$}. {II}}, Ark. Mat. \textbf{6} (1966), 409--431.

\bibitem{Aronsson:67}
\bysame, \emph{Extension of functions satisfying {L}ipschitz conditions}, Ark.
  Mat. \textbf{6} (1967), 551--561.

\bibitem{Aronsson:10}
\bysame, \emph{On certain minimax problems and {P}ontryagin's maximum
  principle}, Calc. Var. Partial Differential Equations \textbf{37} (2010),
  no.~1-2, 99--109.

\bibitem{Blaschke:45}
W.~Blaschke, \emph{Vorlesungen \"uber {D}ifferentialgeometrie und geometrische
  {G}rundlagen von {E}insteins {R}elativit\"atstheorie. {B}and {I}.
  {E}lementare {D}ifferentialgeometrie}, Dover Publications, New York, N. Y.,
  1945, 3d ed.

\bibitem{Bryant-Griffiths:86}
R.~Bryant and P.~Griffiths, \emph{Reduction for constrained variational
  problems and {$\int{1\over 2}k^2\,ds$}}, Amer. J. Math. \textbf{108} (1986),
  no.~3, 525--570.

\bibitem{Chern:67}
S.~S. Chern, \emph{Curves and surfaces in {E}uclidean space}, Studies in
  {G}lobal {G}eometry and {A}nalysis, Math. Assoc. Amer. (distributed by
  Prentice-Hall, Englewood Cliffs, N.J.), 1967, pp.~16--56.

\bibitem{Dubins:57}
L.~E. Dubins, \emph{On curves of minimal length with a constraint on average
  curvature, and with prescribed initial and terminal positions and tangents},
  Amer. J. Math. \textbf{79} (1957), 497--516.

\bibitem{Euler:1744}
L.~Euler, \emph{{M}ethodus inveniendi lineas curvas maximi minimive proprietate
  gaudentes sive solutio problematis isoperimetrici latissimo sensu accepti},
  Marc-MichelBousquet \& Co., Lausanne--Geneva, 1744.

\bibitem{Ferone-Kawohl-Nitsch:18}
V.~Ferone, B.~Kawohl, and C.~Nitsch, \emph{Generalized elastica problems under
  area constraint}, Math. Res. Lett. \textbf{25} (2018), no.~2, 521--533.

\bibitem{Huang:04}
R.~Huang, \emph{A note on the {$p$}-elastica in a constant sectional curvature
  manifold}, J. Geom. Phys. \textbf{49} (2004), no.~3-4, 343--349.

\bibitem{Katzourakis:15}
N.~Katzourakis, \emph{An introduction to viscosity solutions for fully
  nonlinear {PDE} with applications to calculus of variations in {$L^\infty$}},
  SpringerBriefs in Mathematics, Springer, Cham, 2015.

\bibitem{Katzourakis-Moser:19}
N.~Katzourakis and R.~Moser, \emph{Existence, uniqueness and structure of
  second order absolute minimisers}, Arch. Ration. Mech. Anal. \textbf{231}
  (2019), no.~3, 1615--1634.

\bibitem{Katzourakis-Parini:17}
N.~Katzourakis and E.~Parini, \emph{The eigenvalue problem for the {$\infty
  $}-{B}ilaplacian}, NoDEA Nonlinear Differential Equations Appl. \textbf{24}
  (2017), no.~6, Art. 68, 25.

\bibitem{Katzourakis-Pryer:18}
N.~Katzourakis and T.~Pryer, \emph{Second-order {$L^\infty$} variational
  problems and the $\infty$-polylaplacian}, Adv. Calc. Var. (Ahead of Print)
  (2018), DOI:10.1515/acv-2016-0052.

\bibitem{Katzourakis-Pryer:19}
\bysame, \emph{On the numerical approximation of {$p$}-biharmonic and
  {$\infty$}-biharmonic functions}, Numer. Methods Partial Differential
  Equations \textbf{35} (2019), no.~1, 155--180.

\bibitem{Langer-Singer:84_2}
J.~Langer and D.~A. Singer, \emph{Knotted elastic curves in {${\bf R}^3$}}, J.
  London Math. Soc. (2) \textbf{30} (1984), no.~3, 512--520.

\bibitem{Langer-Singer:84_1}
\bysame, \emph{The total squared curvature of closed curves}, J. Differential
  Geom. \textbf{20} (1984), no.~1, 1--22.

\bibitem{Laumond-Sekhavat-Lamiraux:98}
J.~Laumond, S.~Sekhavat, and F.~Lamiraux, \emph{Guidelines in nonholonomic
  motion planning for mobile robots}, Robot Motion Planning and Control,
  Lecture Notes in Control and Information Sciences, vol. 229, Springer-Verlag,
  Berlin--Heidelberg, 1998, pp.~1--53.

\bibitem{Linner:98}
A.~Linn\'{e}r, \emph{Explicit elastic curves}, Ann. Global Anal. Geom.
  \textbf{16} (1998), no.~5, 445--475.

\bibitem{Markov:1887}
A.~A. Markov, \emph{Some examples of the solution of a special kind of problem
  on greatest and least quantities}, Soobshch. Kharkovsk. Mat. Obshch.
  \textbf{1} (1887), 250--276, in Russian.

\bibitem{Masnou-Morel:98}
S.~Masnou and J.-M. Morel, \emph{Level lines based disocclusion}, Proceedings
  1998 IEEE International Conference on Image Processing, 1998, pp.~259--263.

\bibitem{Moser-Schwetlick:12}
R.~Moser and H.~Schwetlick, \emph{Minimizers of a weighted maximum of the
  {G}auss curvature}, Ann. Global Anal. Geom. \textbf{41} (2012), no.~2,
  199--207.

\bibitem{Oldfather-Ellis-Brown:33}
W.~A Oldfather, C.~A. Ellis, and D.~M. Brown, \emph{{L}eonhard {E}uler's
  elastic curves}, Isis \textbf{20} (1933), 72--160.

\bibitem{Reeds-Shepp:90}
J.~A. Reeds and L.~A. Shepp, \emph{Optimal paths for a car that goes both
  forwards and backwards}, Pacific J. Math. \textbf{145} (1990), no.~2,
  367--393.

\bibitem{Sakellaris:17}
Z.~N. Sakellaris, \emph{Minimization of scalar curvature in conformal
  geometry}, Ann. Global Anal. Geom. \textbf{51} (2017), no.~1, 73--89.

\bibitem{Schmidt:25}
E.~Schmidt, \emph{\"{U}ber das {E}xtremum der {B}ogenl\"ange einer {R}aumkurve
  bei vergeschriebenen {E}inschr\"ankungen ihrer {K}r\"ummung},
  Sitzungsberichte preuss. Akad. Wissenschaften Berlin \textbf{1925} (1925),
  485--490.

\bibitem{Schur:21}
A.~Schur, \emph{\"{U}ber die {S}chwarzsche {E}xtremaleigenschaft des {K}reises
  unter den {K}urven konstanter {K}r\"ummung}, Math. Ann. \textbf{83} (1921),
  143--148.

\bibitem{Simon:96}
L.~Simon, \emph{Theorems on regularity and singularity of energy minimizing
  maps}, Lectures in Math.~ETH Z\"urich, Birkh\"auser, Basel, 1996.

\bibitem{Sussmann:95}
H.~J. Sussmann, \emph{Shortest 3-dimensional paths with a prescribed curvature
  bound}, Proceedings of 1995 34th IEEE Conference on Decision and Control,
  vol.~4, 1995, pp.~3306--3312.

\end{thebibliography}
\end{document}